\documentclass[12pt]{amsart}

\usepackage[margin=1.3in]{geometry}
\usepackage{amssymb,amsmath}
\usepackage{amsfonts}
\usepackage{amsthm}
\usepackage{hyperref, xcolor}
\usepackage{mathrsfs}
\usepackage{indentfirst}
\usepackage{enumitem}

\newtheorem{thm}{Theorem}[section]
\newtheorem{lem}[thm]{Lemma}
\newtheorem{prop}[thm]{Proposition}
\newtheorem{defn}[thm]{Definition}

\newtheorem{cor}[thm]{Corollary}
\newtheorem{rem}[thm]{Remark}

\newtheorem*{clm}{Claim}
\numberwithin{equation}{section}

\def\XXint#1#2#3{{\setbox0=\hbox{$#1{#2#3}{\int}$}
		\vcenter{\hbox{$#2#3$}}\kern-.5\wd0}}

\begin{document}

	\title{Lagrangian Mean Curvature Equations on exterior domains}
	
\author[J.-G.~Bao]{Jiguang Bao}
\thanks{The first author is supported by the National Natural Science Foundation of China (12371200) and Beijing Natural
	Science Foundation (1254049).}
\author[Q.-F.~Jiang]{Qinfeng Jiang*}
\thanks{*Corresponding author.}

\subjclass[2020]{Primary 35J60 35C20; Secondary 35J67}
\keywords{ Lagrangian mean curvature equation, asymptotic behavior, quasiconformal mapping, exterior Dirichlet problem}

	\maketitle
	
	\begin{center}
	\normalsize
	School of Mathematical Sciences, Beijing Normal University,\\
	Beijing, 100875, China\\[0.3em]
	\text{jgbao@bnu.edu.cn}\\
	\text{202531130031@mail.bnu.edu.cn}
\end{center}

	\begin{abstract}
	We  introduce an extended exterior $(K,K^{\prime},\alpha_0)$--quasiconformal mapping method to study the asymptotic behavior at infinity of solutions to the supercritical phase Lagrangian mean curvature equation
	\[
	\sum_{i=1}^{n} \arctan \lambda_i(D^2u) = \theta + f(x)
	\]
	on exterior domains in $\mathbb{R}^n$, where the constant $|\theta|\in((n-2)\pi/2,n\pi/2)$, $n\geq 2$, and $f=O(|x|^{-\beta})$ is a perturbation term with the sharp decay condition $\beta>2$ at infinity. Our work generalizes the classical exterior Bernstein-type theorem for the special Lagrangian equation ($f\equiv0$) established by Li--Li--Yuan [Adv. Math. (2020)].  
	Via Perron's method, we solve the corresponding  Dirichlet problem outside a  bounded, uniformly convex domain, prescribing asymptotic behavior at infinity. For $n \geq 3$, we establish existence and uniqueness of viscosity solutions in both the supercritical phase case with $f \not\equiv 0$ and the subcritical phase case with $f \equiv 0$. This extends earlier work by Li [Trans. Amer. Math. Soc. (2019)] on the exterior Dirichlet problem for the special Lagrangian equation ($f \equiv 0$) under weaker regularity assumptions on the interior boundary and boundary data.
	\end{abstract}
	
\tableofcontents  
	
\section{Introduction}\label{sec1}
Within the framework of calibrated geometry, Harvey and Lawson \cite{HL1982} first introduced the special Lagrangian equation 
  \begin{equation}\label{eq1}
  	F(D^2u) := \sum_{i=1}^{n} \arctan \lambda_i(D^2u) = \theta,  
  \end{equation}
  where  $\lambda_i(D^2u)$ denote the eigenvalues of the Hessian $D^2u$, $i=1,2,\cdots,n$, and  $\theta \in \left(-n\pi/2,n\pi/2\right)$ is a constant. 
The left hand side of the equation \eqref{eq1} indeed stands for the argument of the complex number $(1+\sqrt{-1}\lambda_{1}(D^{2}u))\cdots(1+$ $\sqrt{-1}\lambda_{n}(D^{2}u))$, which is usually called Lagrangian phase. Its solutions $u$ were shown to have the property that the gradient graph $(x, Du(x))$
in Euclidean space is a Lagrangian submanifold which is absolutely volume-minimizing.  

The arctangent operator
is clearly elliptic for any function $u$. Because $\arctan$ is an odd function, we may assume without loss of generality that $\theta \geq 0$. In the literature \cite{Yuan2006}, the Lagrangian phase $(n - 2)\pi/2$ is usually called critical, since the level set
\[L_\theta:=\left\{\lambda=\left(\lambda_1,\cdots,\lambda_n\right)\in\mathbb{R}^n\vert\sum_{i=1}^n\arctan\lambda_i=\theta\right\}\]
is convex only if $\theta\geq (n - 2)\pi/2$.  For $\theta \ge (n-1)\pi/2$, convexity of $L_{\theta}$ is immediate, because $\lambda_i \geq 0$ for all $i$ and the operator $F$ becomes concave. For supercritical phases $\theta > (n-2)\pi/2 $, the operator $F$ can be extended to a concave operator \cite{CPW2017,CW2019}. This structural property plays a fundamental role in the analysis of solutions, particularly in establishing regularity results.

There are some rigidity theorems for special Lagrangian
equations \eqref{eq1} on the whole space, Jost--Xin \cite{JX2002} employed harmonic maps into convex subsets of Grassmannians, while Yuan \cite{Y2002} developed techniques based on geometric measure theory, both proving that any entire smooth convex solution must be quadratic polynomial, with additional constrained conditions appearing in \cite{WY2008,D2023}.  Further, Chen--Warren--Yuan's \cite{CWY2009} interior regularity theory showed that all convex viscosity solutions are smooth when $\theta \geq (n-1)\pi/2$, with subsequent work \cite{CSY2023} establishing real analyticity for all convex viscosity solutions. In \cite{Y2002,Yuan2006}, Yuan established that any entire solutions with supercritical phases  or with semiconvexity are quadratic polynomials.   The supercritical phase is a necessary condition,
as demonstrated by an entire solution $u(x)=\left(x_1^2+x_2^2-1\right)e^{-x_3}+e^{x_3}/4$ to \eqref{eq1} with
$\theta = \pi/2$ in $\mathbb{R}^3$ by Warren \cite{W2010}.

    Let $u$ be a smooth solution $u$ of \eqref{eq1} in $\mathbb{R}^n\setminus \overline{B}_1$ with supercritical phase or with semiconvexity. Li--Li--Yuan \cite{LLY2020} established
    asymptotic expansions for solutions near infinity. Specifically,  there exist some $n \times n$ symmetric matrix $A$, some vector $ b \in \mathbb{R}^n$, and some constant $c \in \mathbb{R}$, such that
\begin{equation}\label{fd4}
u(x)= \frac{1}{2}x^\mathsf{T} A x + b^\mathsf{T} x + c+O_k(|x|^{2-n})\quad \text{as} \ |x|\to +\infty,
\end{equation}
holds for dimensions $n\geq3$.   For $n=2$, there exist some $2 \times 2$ symmetric matrix $A$, some vector $ b \in \mathbb{R}^n$, and some constant $c,d \in \mathbb{R}$,
\begin{equation}\label{fd5}
	u(x)= \frac{1}{2}x^\mathsf{T} A x + b^\mathsf{T} x +\frac{d}{2}\log\left(x^{\mathsf{T}}(I+A^2)x\right) + c+O_k(|x|^{-1})\quad \text{as} \ |x|\to +\infty,
\end{equation}
for all $k\in \mathbb{N}$, the notation $\phi(x)=O_k\left(|x|^{k_1}(\ln |x|)^{k_2}\right)$, $k_1\leq 0, k_2\geq 0$, means that $|D^t\phi(x)|=O(|x|^{k_1-t}(\ln |x|)^{k_2})$ for $t=0,\cdots,k$.

Subsequent works by Liu--Bao \cite{LB2022JGA,LB2023} derived higher order asymptotic expansions. Recently, Han--Marchenko\cite{HM2025}  also used a single function $v$ to characterize remainders in the assymptotic expansions via a modified Kelvin transform. When the right-hand side of \eqref{eq1} is perturbed as $\theta + f(x)$ with $f(x) \to 0$ at infinity, Liu--Bao \cite{LB2022} established the asymptotic expansion at infinity in dimension $2$ under the assumption of quadratic growth. 

In this paper,  our first objective is to study   the asymptotic behaviors at infinity of solutions for Lagrangian mean curvature equations in exterior domian.

Let $\Omega$ be a bounded domain in $\mathbb{R}^n$ ($n \geq 2$), consider  Lagrangian mean curvature equation
\begin{equation}\label{eq}
	F(D^2u) := \sum_{i=1}^{n} \arctan \lambda_i(D^2u) = \theta + f(x) \quad \text{in } \mathbb{R}^n \setminus \overline{\Omega}.
\end{equation}
 Throughout this paper, we assume that
$f$ is $C^m$ near infinity, and 
\begin{equation}\label{abf}
	f(x) = O_m(|x|^{-\beta}) \quad \text{as } |x| \to \infty,
\end{equation}
where $m\geq2$ and    $\beta>2$.

 In contrast to the Monge-Amp\`{e}re equation, solutions to \eqref{eq} do not satisfy a subquadratic growth estimate for $u$ minus a quadratic polynomial \cite{CL2003,BLZ2015}. The principal difficulty is establishing the existence of the limit $A$ of $D^2u$ at infinity, as noted in \cite[Theorem 2.1]{LLY2020}. In dimension $2$, departing from the barrier function techniques employed by Li--Li--Yuan \cite{LLY2020} and Jia \cite{J2020}, we develop in Section~\ref{sec2.3} an extended exterior $(K,K^{\prime},\alpha_0)$--quasiconformal mapping method to obtain the  decay rate $|D^2 u(x) - A|$ as $|x| \to +\infty$. For standard treatments of $K$--quasiconformal and $(K,K')$--quasiconformal mappings, we refer the readers to \cite{GT}. 
 The priori interior H\"{o}lder estimate is well known (see, e.g., \cite[Lemma 2]{N53} and \cite[Theorem 1]{FS58}).

Recall again, without loss of generality, we assume $\theta\geq 0$. Let
\[\mathcal{A}:=\{M \in \mathcal{S}^{n\times n}|  \sum_{i=1}^{n} \arctan\lambda_i(M)=\theta   \},\]
where $\mathcal{S}^{n\times n}$ denotes the linear space of symmetric $n\times n$ real matrices.

The first two main results of this paper, which address the asymptotic behaviors of solutions at infinity, are given below for the cases $n=2$ and $n\geq3$  respectively.
\begin{thm}\label{D2ue}
Let $n=2$, $\theta>0$, and suppose $u$ is a smooth solution of~\eqref{eq}. Assume that $f$ satisfies condition~\eqref{abf} with constants $\beta > 2$ and $m \geq 3$. If there exist  positive constants $C_0$ and $R_0$ such that
\begin{equation}\label{Ducond}
	|Du(x)| \, \bigl| D^{l} f(x) \bigr|^{\frac{1}{\beta+l}} \leq C_0, \qquad \forall |x| \geq R_0,
\end{equation}
 for $l = 0,1,2,3$. Then there exist a  matrix $A \in \mathcal{A}$, a vector $b \in \mathbb{R}^2$, and constants $c, d \in \mathbb{R}$ such that
	\begin{equation}\label{bg2}
		u(x)=\frac{1}{2}x^\mathsf{T} Ax+b^\mathsf{T} x+\frac{d}{2}\ln\left(x^{\mathsf{T}}(I+A^2)x\right)+c+O_k\left(|x|^{-\min \left\{1,\beta-2 \right\} } (\ln |x|)^{\mu_1}\right)
	\end{equation}
	as $|x|\to+\infty$, for all $k=0,\cdots,m+1$. Here, $\mu_1 = 
	\begin{cases} 
		0, & \beta \neq 3, \\ 
		1, & \beta = 3.
	\end{cases}$.
	
	Moreover, if $\beta>3$, there exists  vector $ e\in \mathbb{R} ^2$, such that  for  some $ \zeta \in (1,2)$ depending only on $\beta$, such that
	\begin{equation}\label{bg3}
				u(x)=\frac{1}{2}x^\mathsf{T} Ax+b^\mathsf{T} x+\frac{d}{2}\ln\left(x^{\mathsf{T}}(I+A^2)x\right)+c+\frac{e^\mathsf{T} x}{x^{\mathsf{T}}(I+A^2)x}+O_k\left(|x|^{-\zeta}\right)
	\end{equation}
	as $|x|\to+\infty$, for all $k=0,\cdots,m+1$.	$d$ in \eqref{bg2} and \eqref{bg3} is given by
	\begin{align*}
		d=\frac{ \sqrt{\det(I+A^2)}}{2\pi}
	 \cdot\int_{\mathbb{R}^2}\left(\left(I+A^2\right)_{ij}D_{ij}u(x)-\mathrm{tr}A\right)\mathrm{d}x.
	\end{align*}
\end{thm}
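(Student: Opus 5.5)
Our plan follows the two-dimensional strategy of Li--Li--Yuan, replacing their barrier step by the extended exterior $(K,K',\alpha_0)$--quasiconformal mapping method announced for Section~\ref{sec2.3}.

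\textbf{Step 1: the gradient map is quasiconformal.} Since $n=2$ and $\theta>0$, the phase is supercritical, and a Lewy--Yuan rotation $(x,Du)\mapsto e^{-\sqrt{-1}\gamma}(x,Du)$ with a suitable angle $\gamma$ turns $u$ into a function $\bar u$ of new variables $\bar x$ with $D^2\bar u$ bounded. Here $\bar x$ ranges over an exterior domain, because $\bar x=\cos\gamma\,x+\sin\gamma\,Du$ is uniformly expanding. The equation becomes $\arctan\bar\lambda_1+\arctan\bar\lambda_2=\theta-2\gamma+f$. Differentiate it and write $w=D\bar u$. We expect $w$ to be an exterior $(K,K')$--quasiconformal map whose inhomogeneous term is controlled by $|Df|$. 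The hypothesis \eqref{Ducond} with $l=1$ is exactly what converts $|Df(x)|$ into a decay $O(|\bar x|^{-\beta-1})$ in the new variables: it uses $|\bar x|\lesssim |x|+|Du|$ together with $|Du|\,|Df|^{1/(\beta+1)}\le C_0$. We then apply the extended quasiconformal Hölder estimate on the annuli $B_{2R}\setminus B_R$, rescaled, together with the interior Hölder estimate of \cite{N53,FS58}. This should show that $D^2\bar u$ has a limit at infinity with a rate $|D^2\bar u-\bar A|=O(|\bar x|^{-\alpha_0})$. Rotating back gives $A\in\mathcal A$ and $|D^2u-A|=O(|x|^{-\alpha_0})$. \textbf{This is the main obstacle.} The delicate points are the uniformity of $K,K'$ under the rotation and the exterior version of the oscillation decay, where the hypothesis $\beta>2$ must beat the scaling.

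\textbf{Step 2: linearize and iterate.} Set $v=u-\tfrac12x^{\mathsf T}Ax$. It solves $a_{ij}(x)D_{ij}v=\tilde f$, with $a_{ij}=\int_0^1 DF(A+tD^2v)\,dt$. Then $a_{ij}\to(I+A^2)^{-1}$ at rate $|x|^{-\alpha_0}$, and $\tilde f=f=O(|x|^{-\beta})$. Differentiate the equation, using $m\ge3$ and \eqref{Ducond} for $l\le3$ to control the derivatives of $f$ along the rotated variables. Schauder estimates on dyadic annuli then give $D^kv=O(|x|^{2-k-\alpha_0})$ and $C^{k,\alpha}$ bounds on $a_{ij}$. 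Next apply the linear change of variables $y=(I+A^2)^{-1/2}x$, after which the equation reads $\Delta v=g$ with $g=O(|x|^{-\min\{2\alpha_0,\beta\}})$ (the quadratic error $(a_{ij}-\delta_{ij})D_{ij}v$ is absorbed). Bootstrapping improves the decay of $D^2v$ until $g=O(|x|^{-\beta'})$ with $\beta'>2$.

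\textbf{Step 3: expand the Newtonian potential.} For $\beta'>2$, the exterior Newtonian potential of $g$ in $\mathbb R^2$ has the expansion $\tfrac{d}{2\pi}\cdot 2\pi\ln|y|+O(|y|^{2-\beta'}(\ln|y|)^{\mu_1})$. The remainder $\Delta v-g=0$ is harmonic in an exterior domain with at most linear growth (since $Dv$ is sublinear), so Laurent expansion gives $b^{\mathsf T}x+d\ln|y|+c+O(|y|^{-1})$. Returning to $x$ produces the term $\tfrac d2\ln(x^{\mathsf T}(I+A^2)x)$ and the remainder exponent $\min\{1,\beta-2\}$, with the logarithm arising only in the resonant case $\beta=3$. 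The coefficient $d$ is identified by the divergence theorem as the total flux, which gives the stated integral of $(I+A^2)_{ij}D_{ij}u-\mathrm{tr}A$ (after an integration by parts) times $\sqrt{\det(I+A^2)}/2\pi$.

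\textbf{Step 4: the case $\beta>3$ and derivative bounds.} When $\beta>3$, we repeat the expansion one order further. The harmonic part contributes the dipole term $e^{\mathsf T}y/|y|^2=e'^{\mathsf T}x/(x^{\mathsf T}(I+A^2)x)$. The potential of $g$, including the quadratic nonlinearity now of order $|x|^{-4}$, contributes $O(|x|^{-\zeta})$ with some $\zeta\in(1,2)$ depending on $\beta$. The $O_k$ bounds for $k\le m+1$ follow by applying scaled Schauder estimates to the differentiated equation on dyadic annuli; this uses $f\in C^m$.
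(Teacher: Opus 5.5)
Your Step 1 ends with ``rotating back gives $A\in\mathcal A$ and $|D^2u-A|=O(|x|^{-\alpha_0})$''. This is where the argument has a genuine gap.

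At that point you only know $D^2\bar u\to\bar A$ at some rate $|\bar x|^{-\alpha}$ with $\alpha\in(0,1)$. Since $D^2u=(\sin\gamma\,I+\cos\gamma\,D^2\bar u)(\cos\gamma\,I-\sin\gamma\,D^2\bar u)^{-1}$, passing to the limit requires $\cos\gamma\,I-\sin\gamma\,\bar A$ to be invertible, and nothing you have established gives this. The rotation only provides the open bound $\bar\lambda_i<\cot\gamma$ pointwise. It also sends $\lambda=+\infty$ to $\bar\lambda=\cot\gamma$, so a limit $\bar A$ with eigenvalue $\cot\gamma$ is exactly the case where $D^2u$ is unbounded at infinity. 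In that case the original equation is not uniformly elliptic along $u$. Your Step 2 (linearizing at $A$, $a_{ij}\to(I+A^2)^{-1}$, scaled Schauder estimates) then has nothing to stand on. The Hessian rate alone does not exclude this case: if $e$ spans the kernel, integrating $D_{\bar x}x=\cos\gamma\,I-\sin\gamma\,D^2\bar u$ along the ray $\bar x=te$ only gives $|x(te)|\lesssim t^{1-\alpha}$, which is compatible with $|x|\to\infty$.

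The paper closes this by carrying out the expansion in the rotated variables \emph{before} rotating back.
\begin{itemize}
\item Lemma \ref{abf11} gives $O_3$ decay of the rotated right-hand side $f(\cos\gamma\,\bar x-\sin\gamma\,D\bar u)$. This is where \eqref{Ducond} with $l=2,3$ is really needed, because derivatives of that function involve $D^3\bar u$ and $D^4\bar u$.
\item Lemmas \ref{ab1} and \ref{ab2} and Steps 2--3 of the paper's proof are then run for $\bar u$, giving $D\bar u=\bar A\bar x+\bar b+O(|\bar x|^{-\xi})$.
\item Hence $x=(\cos\gamma\,I-\sin\gamma\,\bar A)\bar x-\sin\gamma\,\bar b+o(1)$. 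A kernel vector $e$ would keep $x(te)$ bounded, contradicting $|x|\to\infty$ as $|\bar x|\to\infty$ (Proposition \ref{rtp}(v), following Li--Li--Yuan).
\item Only then is $D^2u$ bounded, and Theorem \ref{wdecay} is reapplied to $u$ in the original variables, where $f=O_m(|x|^{-\beta})$ already suffices.
\end{itemize}
So your appeal to \eqref{Ducond} for $l\le 3$ in Step 2, which is set in the $x$ variables, actually belongs to this missing step.

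Two lesser points.
\begin{itemize}
\item The step you call the main obstacle is routine. The quasiconformal map should be $D(\partial_k\bar u)$, not $D\bar u$, since the latter's Jacobian $\det D^2\bar u$ can be negative. One Kelvin-transforms it into a punctured disc (Lemma \ref{klem}) and applies Theorem \ref{fsthm}, which needs only $\alpha_0=\beta+1>1$, not $\beta>2$. Rescaled annular H\"older estimates alone would give no decay across scales.
\item The normalization should be $y=(I+A^2)^{1/2}x$, not $(I+A^2)^{-1/2}x$, in order to produce $\ln(x^{\mathsf T}(I+A^2)x)$.
\end{itemize}
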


\begin{thm}\label{ng3r}
Let $n\geq3$, $\theta>(n-2)\pi/2$, and suppose $u$ be a smooth solution of \eqref{eq}. Assume that $f$ satisfies \eqref{abf}
for	$m\geq2$ and $\beta>2$. If  there exist positive constants  $C_0$ and  $R_0$ such that 
	\begin{enumerate}
		\item[(i)] $|Du(x)||D^lf(x)|^{\frac{1}{\beta+l}}\leq C_0$ holds for $ |x|\geq R_0$ and $l=0,1,2$. 
		\item[(ii)]  $\beta$ is sufficiently large.
	\end{enumerate}
	Then there exist a  matrix $ A\in \mathcal{A}$, $b\in \mathbb{R} ^n, c\in \mathbb{R}$ 
	such that
	\[u(x)=\frac12x^\mathsf{T} Ax+b^\mathsf{T} x+c+O_{k}\left(|x|^{2-\min \left\{\beta,n \right\}} (\ln |x|)^{\mu_2}\right)\]
	as $|x|\to +\infty$, for all $k=0,\cdots,m+1$. Here,  $\mu_2 = 
	\begin{cases} 
		0, & \beta \neq n, \\ 
		1, & \beta = n.
	\end{cases}$. 
\end{thm}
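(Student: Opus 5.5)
The plan has four steps: a scaling argument showing $D^2u$ is bounded and Hölder, convergence of $D^2u$ to some $A\in\mathcal{A}$, a decay rate for $D^2u-A$ via linearization, and then expansion of $u-\tfrac12x^\mathsf{T}Ax$ using the linearized operator as a constant-coefficient Laplacian. Throughout I take $\theta>(n-2)\pi/2$, so that $F$ can be extended to a concave operator on the relevant level sets \cite{CPW2017,CW2019}.

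\textbf{Scaling and uniform Hessian bounds.} For $|x_0|=2R$ with $R$ large, set
\[
u_R(y)=R^{-2}u(x_0+Ry), \qquad |y|\le 1 .
\]
Then $u_R$ solves
\[
F(D^2u_R)=\theta+f(x_0+Ry),
\]
and the right-hand side is $O(R^{-\beta})$ in $C^m$. Hypothesis (i) says $|Du|\le C_0|f|^{-1/(\beta+l)}$, which in the worst case is comparable to $|x|$ when $f\sim|x|^{-\beta}$. This gives $|Du_R|\le C$ uniformly in $R$, so after subtracting a constant $u_R$ has uniformly bounded oscillation. The derivative conditions for $l=1,2$ play the same role for the derivatives of the phase.

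With this bound I would invoke the a priori Hessian estimates for the supercritical phase equation with variable phase (Warren--Yuan type, or the concave extension plus Evans--Krylov). These give $|D^2u_R|\le C$ and $[D^2u_R]_{C^{\alpha}(B_{1/2})}\le C$. Undoing the scaling, $D^2u$ is bounded on $\{|x|\ge R_0\}$ and
\[
|x|^\alpha [D^2u]_{C^\alpha(B_{|x|/2}(x))}\le C .
\]

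\textbf{Existence of the limit $A$.} This is the step I expect to be the main obstacle. I would follow the strategy of \cite[Theorem 2.1]{LLY2020}. Differentiate the equation: $w=D_eu$ satisfies
\[
a_{ij}D_{ij}w=D_ef,
\]
where $a_{ij}=\bigl((I+(D^2u)^2)^{-1}\bigr)_{ij}$ is uniformly elliptic by the previous step. Supercriticality makes the level set convex, so along the blow-down sequence $u_R$ converges subsequentially to an entire solution of $F(D^2v)=\theta$ with $|Dv|\le C|y|$. By Yuan's rigidity \cite{Yuan2006}, $v$ is a quadratic polynomial.

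Uniqueness of the blow-down limit $A$ must then be proved separately. For that I would use concavity: $D_{ee}u$ is a subsolution of
\[
a_{ij}D_{ij}\,\cdot=D_{ee}f+(\text{concavity term}),
\]
and $\beta>2$ makes the forcing integrable in the appropriate weighted sense. A weak Harnack / Krylov–Safonov decay argument on dyadic annuli then gives
\[
\operatorname{osc}_{|x|\ge R}D^2u\le CR^{-\alpha}.
\]
Hence $D^2u\to A$ with $|D^2u-A|=O(|x|^{-\alpha})$, and passing to the limit in the equation gives $A\in\mathcal{A}$ because $f\to0$.

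\textbf{Linearization and expansion.} Let $v=u-\tfrac12x^\mathsf{T}Ax$. Writing the equation via the fundamental theorem of calculus gives
\[
\tilde a_{ij}(x)D_{ij}v=f,\qquad \tilde a_{ij}=\int_0^1 DF\bigl(A+tD^2v\bigr)_{ij}\,dt,
\]
with $\tilde a_{ij}-DF(A)_{ij}=O(|x|^{-\alpha})$, together with Hölder decay of the same order. Changing variables by $(DF(A))^{1/2}$, which is a multiple of $(I+A^2)^{-1/2}$, turns the principal part into the Laplacian. The equation becomes
\[
\Delta v=f+\bigl(DF(A)-\tilde a\bigr)D^2v=O\bigl(|x|^{-\beta}+|x|^{-\alpha}|D^2v|\bigr).
\]

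I would then iterate. Suppose $|D^2v|=O(|x|^{-\gamma})$. The right-hand side is $O(|x|^{-\min\{\beta,\gamma+\alpha\}})$. Standard exterior Poisson estimates (the Newton potential plus the Liouville theorem for harmonic functions that grow slower than linearly) give
\[
v=b^\mathsf{T}x+c+O\bigl(|x|^{2-\min\{\beta,\gamma+\alpha,n\}}\bigr),
\]
with a logarithm when the exponents coincide. The linear growth of $v$ is fed in here through the gradient bound. Rescaled Schauder estimates on the annuli $B_{|x|/2}(x)$ upgrade this to $O_k$ with $k\le m+1$, using $f\in C^m$. Repeating finitely many times reaches the exponent $\min\{\beta,n\}$, and the borderline case $\beta=n$ produces the $\ln|x|$ factor, i.e.\ $\mu_2=1$.

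The hypothesis that $\beta$ is sufficiently large I would use exactly in the oscillation-decay step. There the forcing terms $D^2f$ and the perturbation must decay faster than the rate demanded by the Krylov–Safonov iteration, and the threshold on $\beta$ depends on the ellipticity constants obtained in the first step.
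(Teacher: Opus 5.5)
\textbf{Gap in Step 1.} Your first step does not go through, and everything after it depends on it. Hypothesis (i) with $l=0$ reads $|Du|\le C_0|f|^{-1/\beta}$. Since $|f|\le C|x|^{-\beta}$, we have $|f|^{-1/\beta}\ge c|x|$, so (i) is an upper bound on $|Du|$ that is \emph{at least} linear, never at most linear. The case $f\sim|x|^{-\beta}$ is the most favorable one, not the worst. If $f$ decays faster, or if $f\equiv 0$ (where, as the paper notes, all hypotheses hold trivially), (i) says nothing about $Du$. The theorem assumes no growth condition on $u$. Consequently your rescalings $u_R$ have no uniform bound on $Du_R$ or on their oscillation, and the interior Hessian estimates for the supercritical phase equation, which depend on such a bound, give nothing uniform in $R$. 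So the following are all unsupported in your argument: boundedness of $D^2u$, uniform ellipticity of $a_{ij}$, compactness of the blow-downs, and the linearization.

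\textbf{Missing idea: the Lewy rotation.} This is the rotation of Section~\ref{sec2.2}: $\tilde x=\cos\vartheta\,x+\sin\vartheta\,Du$ and $D_{\tilde x}\tilde u=-\sin\vartheta\,x+\cos\vartheta\,Du$, with $\vartheta=\delta/n$. Supercriticality near infinity forces $\arctan\lambda_i(D^2u)>-\pi/2+2\delta$ for every $i$. The rotation lowers each angle by $\vartheta$, so $|D^2_{\tilde x}\tilde u|<\cot\vartheta$ with no growth assumption at all, and the rotated equation is uniformly elliptic with a concave (modified) operator.

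\textbf{Where (i) is actually used.} It serves only to transfer the decay of $f$ to the new variables. Since $|\tilde x|\le |x|+|Du|$, one gets $|\tilde x|^{\beta+l}|D^lf|\le C(|x|^{\beta+l}+|Du|^{\beta+l})|D^lf|\le C$. Combined with rescaled Evans--Krylov bounds on higher derivatives of $\tilde u$, this gives Lemma~\ref{abf11}.

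\textbf{The paper's chain of steps.}
\begin{enumerate}
\item Lemma~\ref{gth} gives $D^2\tilde u\to\tilde A$.
\item Jia's Theorem~\ref{Jthm1} then yields $D\tilde u=\tilde A\tilde x+\tilde b+O_1(|\tilde x|^{-\xi})$.
\item Rotating back via Proposition~\ref{rtp}(v) gives $D^2u\to A$, so $D^2u$ is bounded in the original variables.
\item Only then is Theorem~\ref{Jthm1} applied to $u$ itself.
\end{enumerate}

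\textbf{Secondary issue in Steps 2--3.} Your linearization in Step 3 starts from a rate $|D^2u-A|=O(|x|^{-\alpha})$. You assert this rate via weak Harnack on dyadic annuli, but that is not justified as stated. For $n\ge3$ there is no maximum principle at infinity in exterior domains (consider $1-|x|^{2-n}$), so the oscillation on $\{|x|\ge R\}$ is not controlled by local Harnack information alone. The paper does not prove a rate there. It proves only convergence, by contradiction, using the barrier $C_\star(\hat R^{-\beta}-|x|^{-\beta})$; this is exactly where ``$\beta$ sufficiently large'' enters. It then relies on Theorem~\ref{Jthm1}, which needs only $D^2u\to A$.
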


\begin{rem}
	Note that in Theorem \ref{D2ue}, we assumed that $f$ possesses at least $C^3$ regularity, a condition that is stronger than that required in Theorem \ref{ng3r}. In contrast to
	higher--dimensional settings, in two dimensions we employ the iteration method developed in \cite{BLZ2015}, rather than appealing to the equivalence property of Green's functions on unbounded domains; see \cite{LB2021,P1992}.
\end{rem}

 Our assumaptions in Theorems \ref{D2ue} and \ref{ng3r} are clearly satisfied if $f\equiv 0$. Consequently, we have the following corollary.
\begin{cor}
	Let $u$ be a smooth solution of the special Lagrangian equation
	\[\sum_{i=1}^{n} \arctan \lambda_i(D^2u) = \theta  \quad \text{in } \mathbb{R}^n \setminus \overline{\Omega},\]
	where $\theta>(n-2)\pi/2$, $\Omega$ is a bounded set.
	There exist a  matrix $ A\in \mathcal{A}$, $b\in \mathbb{R}^2, c\in \mathbb{R}$ such that \eqref{fd4} or \eqref{fd5} holds, and $d$ given as above.
\end{cor}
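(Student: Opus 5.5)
The corollary is the special case $f\equiv 0$ of Theorems \ref{D2ue} and \ref{ng3r}, so the plan is to check that every hypothesis of those theorems holds for this choice of $f$ and then read off the conclusions. We may take $\theta\ge 0$ by oddness of $\arctan$. The assumption $\theta>(n-2)\pi/2$ then gives $\theta>0$ when $n=2$ and the supercritical condition when $n\ge3$, which are exactly the phase hypotheses of the two theorems. For $f\equiv 0$ the decay condition \eqref{abf} holds for every $m$ and every $\beta$, since all derivatives vanish identically. In particular we may take $m\ge 3$ and $\beta$ as large as we like, say $\beta>\max\{3,n\}$, which covers hypothesis (ii) of Theorem \ref{ng3r}. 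The gradient conditions \eqref{Ducond} and (i) hold trivially with any $C_0>0$, because $|D^lf|\equiv 0$ for all $l$. One small point is that the theorems are stated for $\mathbb{R}^n\setminus\overline{\Omega}$ with $\Omega$ a bounded domain, while here $\Omega$ is only a bounded set. Since the conclusions concern only behaviour as $|x|\to\infty$, we restrict $u$ to $\mathbb{R}^n\setminus\overline{B_R}$ for some $R$ with $\overline{\Omega}\subset B_R$ and apply the theorems with the ball $B_R$ in place of $\Omega$.

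For $n\ge 3$, Theorem \ref{ng3r} with $\beta>n$ gives $\min\{\beta,n\}=n$ and $\mu_2=0$. The error term is therefore $O_k(|x|^{2-n})$, which is exactly \eqref{fd4} with $A\in\mathcal{A}$. Since $m$ is arbitrary, this holds for every $k$.

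For $n=2$, Theorem \ref{D2ue} with $\beta>3$ gives $\min\{1,\beta-2\}=1$ and $\mu_1=0$, so \eqref{bg2} becomes \eqref{fd5} with remainder $O_k(|x|^{-1})$. The formula for $d$ is inherited verbatim from the theorem. Again, because $m$ is arbitrary, the expansion holds for all $k$.

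The only step needing any care is the order of quantifiers: $A,b,c,d$ must not depend on the chosen $\beta$ or $m$. This is clear, because these constants are determined by $u$ alone. For instance, $A=\lim D^2u$ at infinity, and $b$, $c$, $d$ are then fixed successively as limits of $u$ minus the known lower-order terms. So a single set of constants works for all $k$ simultaneously, and the corollary follows.
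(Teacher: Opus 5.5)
Your proposal is correct and follows the paper's route: the paper obtains this corollary simply by observing that $f\equiv 0$ satisfies all hypotheses of Theorems \ref{D2ue} and \ref{ng3r}. Your additional points make explicit what the paper leaves implicit:
\begin{itemize}
\item taking $\beta$ large, so the remainders become $O_k(|x|^{2-n})$ and $O_k(|x|^{-1})$ with no logarithmic factor;
\item letting $m$ be arbitrary, so the expansion holds for every $k$;
\item noting that $A,b,c,d$ are uniquely determined by $u$, so the constants do not depend on the choice of $\beta$ or $m$;
\item replacing $\Omega$ by a large ball.
\end{itemize}
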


\begin{rem}
	The above Corollary is a classical result proved by Li--Li--Yuan \cite[Theorem 1.1]{LLY2020}. Therefore, Theorems \ref{D2ue} and \ref{ng3r} can be seen as a extension of their Theorem under the supercritical phases. In \cite{LB2022}, the authers also consider the asymptotic behavior of \eqref{eq} for $n=2$ with supercritical phases and  quadratic growth at infinity assumption.  By interior gradient estimate \cite{BMS2022}, the gradient of solution for  Eq. \eqref{eq} with quadratic growth at infinity implies linear growth at infinity, which clearly satisfies condition~\eqref{Ducond}. For Liouville type theorem of nonlinear second elliptic equations,  such growth condtions can be also found in \cite{BCGJ,WY2008,WB2022}.
\end{rem}

\begin{rem}\label{sprem}
	The following example shows that the decay rate assumption $\beta > 2$   is sharp in Theorems  \ref{D2ue} and \ref{ng3r}.
	
	With out loss generality, we assume $B_1\subset \Omega $. Similar to the construction in \cite[Pages 27--33]{BLW2024},
	we may construct radially symmetric classical solution  $u$ to  \eqref{eq} with right hand term $\theta+|x|^{-2}$  has asympototic behavior
	\[	u_0(x)=\frac{1}{2}\tan \frac{\theta }{2}|x|^{2}+	O((\ln|x|)^{2}),\quad n=2;\]
	and
	\[	u_0(x)=\frac{1}{2}\tan \frac{\theta }{2}|x|^{2}+	O(\ln|x|),\quad n\geq 3,\]
		as $ |x|\to \infty$. 
\end{rem}

Our second objective is to study  the exterior Dirichlet problem for Lagrangian mean curvature equations. 

For the defination of viscosity solutions, we refer to \cite{CIL1992,IL1990,JLS1988}.

  In  \cite{LZ2019},  Li investigated the exterior problem for the special Lagrangian equation ($f\equiv0$). Specifically, for dimensions $n \geq 3$, given a strictly convex $C^2$ boundary domain $\Omega$ and boundary data $\varphi \in C^2(\partial \Omega)$, he proved that for any given symmetric positive definite  matrices $A$ satisfying condition \cite[(1.6)]{LZ2019} with $(n-2)\pi/2 \leq \theta < n\pi/2$,  and any given vector $b \in \mathbb{R}^n$,    there exists a  constant $c_*$ depending on $n$, $\Omega$, $\theta$, $A$, $b$ and $\|\varphi\|_{C^2(\partial \Omega)}$, such that for every $c>c_*$, the exterior Dirichlet problem admits a unique viscosity solution $u\in C^0\left(\mathbb{R}^n\setminus\Omega\right)$ of 
  \begin{equation}\label{Ledeq}
  	\begin{cases}
  		\sum_{i=1}^n \arctan\lambda_i(D^2u) = \theta \quad \text{in } \mathbb{R}^n \setminus \overline{\Omega}, \\
  		u = \varphi \quad \text{on } \partial \Omega,\\
  	 u(x)=\frac{1}{2}x^\mathsf{T} Ax + b^\mathsf{T} x + c+o(1),\quad \text{as} \ |x|\to +\infty. 
  	\end{cases}
  \end{equation}

Take note that the above result was established under the critical or supercritical assumption. A natural question is whether we can extend it to subcritical case. Moreover,  the right hand of equation of the first line in \eqref{Ledeq} is a constant,  it would be interesting to investigate whether the phase $\theta$ can be generalized to a perturbed form $\theta + f(x)$, where $f(x) \to 0$ at infinity.


Define
\[\mathcal{A}_{0}:=\left\{A\in \mathcal{A}\vert \ A \ \text{is positive definite and} \  d(A)>2 \right\}\]
where 
	\begin{equation}\label{MAeps}
d(A) = \frac{1+\lambda_{min}^2(A)}{\lambda_{max}(A)} \cdot \sum_{i=1}^n\frac{\lambda_{i}(A)}{1+\lambda_{i}^2(A)}.
\end{equation}

$d(A)$ plays an important role
in the construction of the subsolution and supersolution in   Lagrangian mean curvature equations.  To a certain extent,  $d(A)>2$  implies that the maximum eigenvalue of \(A\) is controlled by its minimum eigenvalue. This condition is trivially satisfied when \(A =A_{*} I\), where $A_{*}=\tan (\theta/n)$ and $I$ is  the identity matrix.  Although similar assumptions have appeared in the context of exterior Dirichlet problems \cite{BLW2024,LW2024,LZ2019},  whether this assumption can be relaxed or removed remains an open question.

In order to  describe the solvability of the exterior Dirichlet problem under weaker assumptions on the interior boundary and its boundary data, we employ the concept of semi-convexity. For a detailed discussion, we refer the reader to \cite{BW2024,WB2024arxiv}.

Our second main result of this paper are as follows.
\begin{thm}\label{tedp}
	 Let \( \Omega \) be a bounded, strictly convex domain in \( \mathbb{R}^n \), \( n \geq 3 \), \( \partial \Omega \in C^{1,1} \). Let \( \varphi \) be semi-convex with respect to $\partial \Omega$. Then for any given \( A \in \mathcal{A}_{0} \) and \( b \in \mathbb{R}^n \), 
	 \begin{itemize}
	 	\item[(i)] if  $(n-2)\pi/2<\theta<n\pi/2$ and $f$ satisfies \eqref{abf}, there exists a constant \( c_\ast \) depending only on \( n, \Omega, \theta, A, b \), \(\varphi\) and $f$, such that for every \( c \geq c_\ast \), there exists a unique viscosity solution \( u \in C^0(\mathbb{R}^n \setminus \Omega) \) of
	 	\begin{equation}\label{eedp}
	 		\left\{ \begin{array}{ll}
	 			\sum_{i=1}^n \arctan \lambda_i (D^2u) = \theta+f(x) \quad  \text{in }\ \mathbb{R}^n \setminus \overline{\Omega},\\
	 			u = \varphi \quad  \text{on }\ \partial \Omega,\\
	 			u(x)=\frac{1}{2}x^\mathsf{T} Ax + b^\mathsf{T} x + c+O(|x|^{2-\min\{\beta,d(A)\}}),\quad \text{as} \ |x|\to +\infty, 
	 		\end{array}
	 		\right.
	 	\end{equation}
	 for $\beta\neq d(A)$.	The asymptotic behavior is replaced by
	 	\[u(x)=\frac{1}{2}x^\mathsf{T} Ax + b^\mathsf{T} x + c+O(|x|^{2-d(A)}\ln|x|),\quad \text{as} \ |x|\to +\infty,\]
	 	for $\beta= d(A)$.
	 	\item[(ii)]  If  $0<\theta< n\pi/2$  and $f\equiv 0$, there exists a constant \( c^\ast \) depending only on \( n, \Omega, \theta, A, b \) and \(\varphi\), such that for every \( c \geq c^\ast \), 
	 	there exists a unique viscosity solution \( u \in C^0(\mathbb{R}^n \setminus \Omega) \) of
	 	\begin{equation}\label{eedp1}
	 		\left\{ \begin{array}{ll}
	 			\sum_{i=1}^n \arctan \lambda_i (D^2u) = \theta \quad  \text{in }\ \mathbb{R}^n \setminus \overline{\Omega},\\
	 			u = \varphi \quad  \text{on }\ \partial \Omega,\\
	 			u(x)=\frac{1}{2}x^\mathsf{T} Ax + b^\mathsf{T} x + c+O(|x|^{2-d(A)+\epsilon_0}),\quad \text{as} \ |x|\to +\infty, 
	 		\end{array}
	 		\right.
	 	\end{equation}
	 	for some positive constant  $\epsilon_0<d(A)-2$.
	 \end{itemize}
\end{thm}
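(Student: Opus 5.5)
We will prove Theorem \ref{tedp} by Perron's method, following the scheme of Li \cite{LZ2019} and Bao--Li--Wang type constructions \cite{BLW2024}. The main work is to build ordered sub- and supersolutions with the prescribed asymptotics at infinity. By an orthogonal change of variables (which preserves the eigenvalues of the Hessian) we may take $A=\mathrm{diag}(\lambda_1,\dots,\lambda_n)$ positive definite. Subtracting $b^\mathsf{T}x$ (which does not change $D^2u$) we may also take $b=0$ up to shifting $\varphi$.

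\textbf{Boundary barriers.} For each $\xi\in\partial\Omega$, strict convexity of $\Omega$, $\partial\Omega\in C^{1,1}$, and the semi-convexity of $\varphi$ give a quadratic polynomial $w_\xi$ with the following properties: $w_\xi(\xi)=\varphi(\xi)$, $w_\xi\le\varphi$ on $\partial\Omega$, and $D^2w_\xi=\Lambda I$ with $\Lambda$ large. This is the analogue of \cite[Lemma]{LZ2019}, using the semi-convex framework of \cite{BW2024,WB2024arxiv}. Then $F(D^2w_\xi)\ge n\arctan\Lambda>\theta+\sup f$, so $w_\xi$ is a subsolution. The function $\underline w:=\sup_\xi w_\xi$ is a continuous viscosity subsolution with $\underline w=\varphi$ on $\partial\Omega$.

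\textbf{Generalized radial sub/supersolutions.} We look for solutions of the form $\Phi(x)=\tfrac12 x^\mathsf{T}Ax+\psi(r)$, with $r=(x^\mathsf{T}Ax)^{1/2}$ or $r=|x|$. Linearizing $F$ at $A$ gives $F(A+D^2\psi)\approx\theta+\sum_i \frac{\psi_{ii}}{1+\lambda_i^2}$. The radial operator $\sum_i \frac{1}{1+\lambda_i^2}\partial_{ii}$ applied to $r^{2-k}$ has a sign controlled by exactly the quantity $d(A)$ in \eqref{MAeps}: for $2<k<d(A)$, the function $-\mu r^{2-k}$ has a strict sign in the appropriate direction. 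We must also control the nonlinear remainder; positivity of $A$ and concavity of the extended operator for supercritical $\theta$ \cite{CPW2017,CW2019} make the error one-sided. We then choose $\psi$ solving an ODE inequality $\psi''+\frac{d(A)-1}{r}\psi'\gtrless \pm C r^{-\beta}$. This produces
\[\underline u_{c}(x)=\tfrac12x^\mathsf{T}Ax+c-\mu\, r^{2-\min\{\beta,d(A)\}}\quad(\text{with }\ln r\text{ when }\beta=d(A)),\]
as a subsolution, together with an analogous supersolution $\overline u=\tfrac12 x^\mathsf{T}Ax+c+\mu r^{2-\min\{\beta,d(A)\}}$. The perturbation $f=O(|x|^{-\beta})$ is absorbed by the $r^{-\beta}$ forcing, and this is where $\beta\neq d(A)$ versus $\beta=d(A)$ splits.

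In case (ii), $\theta$ may be subcritical and $F$ is no longer concave. We therefore use $k=d(A)-\epsilon_0$ and do a direct eigenvalue computation for the rank-structured Hessian $A+\psi'(r)/r\,(\cdot)+(\psi''-\psi'/r)\,\nu\otimes\nu$, valid for $r$ large. The loss $\epsilon_0$ pays for the nonlinear error without concavity; since $f\equiv0$, no forcing term is present. We expect this to be the main obstacle. Near $\partial\Omega$ we handle it by taking the subsolution to be $\max\{\underline w,\underline u_c\}$; to make $\underline u_c\le\underline w$ on a neighborhood of $\Omega$ we take $\mu$ large, truncating $\psi$ at some radius.

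\textbf{Perron and uniqueness.} We choose $c_*$ so that for $c\ge c_*$ we have $\underline u_c\le\varphi$ on $\partial\Omega$ and $\underline u:=\max\{\underline w,\underline u_c\}$ equals $\underline u_c$ for $|x|$ large. Monotonicity in $c$ and continuity in the parameters give this, as in \cite{LZ2019}. We then define
\[u=\sup\{v\in C^0:\ v \text{ subsolution},\ \underline u\le v\le\overline u,\ v\le\varphi\text{ on }\partial\Omega\}.\]
Standard Perron arguments \cite{CIL1992,IL1990} show that $u$ is a viscosity solution. The barriers $w_\xi$ and the bound $\underline u\le u\le \overline u$ then give both the boundary values and the asymptotic rate. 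Uniqueness follows from the comparison principle for viscosity solutions of the uniformly elliptic arctan equation on exterior domains. Here we apply the maximum principle on $B_R\setminus\overline\Omega$ to $u_1-u_2\pm\varepsilon$ and let $R\to\infty$, using that $u_1-u_2=o(1)$ at infinity since both have the same asymptotics with rate $2-\min\{\beta,d(A)\}<0$.
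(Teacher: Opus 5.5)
\textbf{There is a genuine gap in the subsolution step.} The missing piece is a subsolution, for every $c\ge c_*$, that equals $\varphi$ on $\partial\Omega$ and has constant $c$ at infinity.

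\emph{The profile fails where $\mu$ is large.} With $\underline u_c=\tfrac12x^\mathsf{T}Ax+c-\mu r^{2-k}$ ($k>2$, $r=r_A$), you need $\mu\gtrsim c$ to get below the barriers near $\Omega$. Its Hessian is $hA+r^{-1}h'(Ax)(Ax)^\mathsf{T}$ with $h=1+\mu(k-2)r^{-k}$. At fixed $x$ this is asymptotic to $\mu(k-2)r^{-k}A^{1/2}(I-k\hat y\hat y^\mathsf{T})A^{1/2}$, where $\hat y=A^{1/2}x/r$. That matrix has one eigenvalue tending to $-\infty$ and $n-1$ tending to $+\infty$, so the phase tends to $(n-2)\pi/2<\theta+f$ in case (i). Case (ii) fails too: for $A=I$ and $k$ near $n$, at $\mu(k-2)r^{-k}=1/(k-1)$ the phase is $(n-1)\arctan\frac{k}{k-1}<\frac{n\pi}{4}=\theta$. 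So $\underline u_c$ is a subsolution only where $\mu r^{-k}\le s_0$, a fixed threshold, i.e.\ for $r\gtrsim\mu^{1/k}$. That is all your linearization gives.

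\emph{Gluing with $\underline w=\sup_\xi w_\xi$ does not repair this.}
\begin{itemize}
\item Since $n\arctan\Lambda>\theta\ge n\arctan a_1$, $\underline w$ eventually exceeds $\underline u_c$ along the $x_1$-axis. So $\max\{\underline w,\underline u_c\}$ is not $\underline u_c$ near infinity, and $\underline w$ must be confined to a bounded annulus.
\item On the region where $\underline u_c$ is a subsolution, its radial slope is at most $(1+(k-2)s_0)a_n|x|$. Meanwhile $\Lambda$ is dictated by $\varphi$ and $\partial\Omega$, not by $s_0$.
\item Hence, in general, $\underline u_c-\underline w$ is decreasing along rays there. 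Once $\underline u_c\le\underline w$ at the inner edge, you can never switch back to $\underline u_c>\underline w$ further out.
\end{itemize}

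\textbf{What is missing, and how the paper supplies it.} You need a one-parameter family of subsolutions valid on all of $\mathbb{R}^n\setminus E_{r_1}$ for \emph{fixed} $r_1<r_2$. It should have fixed value $\alpha_1\le\min Q$ on $\partial E_{r_1}$, values on $\partial E_{r_2}$ tending to $+\infty$, and asymptotic constants filling $[c_*,\infty)$ continuously. The paper obtains this from exact solutions of the nonlinear ODE $rh'=I_0(r,h)$. Here $I$ is defined by $g(a_1h+a_nI,a_2(h+\epsilon_0I),\dots,a_n(h+\epsilon_0I))=\overline f(r)$ and $I_0=\max\{I,-a_1h/a_n\}$ (Lemmas \ref{Ipthm}, \ref{csth}). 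By Corollary \ref{subst}, every such solution is a global subsolution. The clamp gives $h_1(r;\beta_1)\ge\beta_1r^{-a_1/a_n}$, so large initial slopes decay slowly rather than like $r^{-k}$; this is exactly what yields \eqref{hab}. The rate $r^{2-\min\{d,\beta\}}$ then comes only from linearizing this ODE at $h=1$. Your ODE $\psi''+\frac{d(A)-1}{r}\psi'=\cdots$ is that linearization, so it captures the tail but cannot produce the family.

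The supersolution side is less delicate in case (i), because concavity of the extended operator gives a one-sided linear bound in that direction.

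\textbf{A smaller point: the boundary upper barrier.} The $w_\xi$ only give $\liminf u\ge\varphi$ at $\partial\Omega$. For $\limsup u\le\varphi$ you need an upper barrier. In case (i) the paper uses a harmonic majorant, together with $\Delta v\ge 0$ for supercritical subsolutions.
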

\begin{rem}
	In problem \eqref{eedp}, if we replace the right hand of the first line equation with any function lying in the
	subcritical or critical range, then the existence and uniqueness of $C^0$ viscosity solutions remain open. Since the locally solvablity of the Dirichlet problem for Lagrangian mean curvature equations with dimensions great than $3$  are only known in two cases: for supercritical phases when the perturbation $f \not\equiv  0$, and for all phases $\theta \in (-n\pi/2, n\pi/2)$ when $f \equiv 0$, as established in \cite[Theorems~1.1 and 1.2]{AB2024}.
\end{rem}

\begin{rem}
  The prescribed asymptotic behavior in problem \eqref{eedp} is optimal, but in \eqref{eedp1} is not optimal. Indeed, for $0<\theta< n\pi/2$ and $A = A_* I$, it is clear that $d(A_* I) = n > 2$, yet for any given $b\in \mathbb{R}$ and large $c$, the following problem
 \begin{equation*}
 	\left\{
 	\begin{array}{l}
 		 \sum_{i=1}^n \arctan \lambda_i(D^2u) = \theta+|x|^{-\beta} \quad \text{in } \mathbb{R}^n \setminus \overline{B}_1, \\
 		u = b \quad \text{on } \partial B_1, \\
 		u(x) = \frac{A_*}{2}|x|^2 + c + O(|x|^{2-\min \left\{\beta,n\right\}}), \quad |x| \to +\infty,
 	\end{array}
 	\right.
 \end{equation*}
 admits a unique radially symmetric solution of the form
 \begin{equation*}
 	u(x) = b + \int_1^{|x|} \bigl( w(\overline{c}\tau^{-\min \left\{\beta,n\right\}}) - w(0) \bigr) \cdot \tau \, d\tau,
 \end{equation*}
 where $w$ is a function and $\overline{c}$ is a constant defined in \cite[Page~18]{LB2022}.
\end{rem}

\begin{rem}
 For $n\geq 3$,	Theorem \ref{tedp} gives a  positive answer to Remark 1.1(3) in  \cite{LZ2019}.  Our method, however, does not yield existence for the exterior Dirichlet problem in dimension two. The principal difficulty is constructing the suitable sub and supersolutions  satisfying  asymptotic behavior \eqref{bg2} at infinity. 
\end{rem}

This paper is organized as follows. In Section~\ref{sec2}, we introduce some standard notations, followed by the definition of the Lewy rotation in Section~\ref{sec2.2}. In Section~\ref{sec2.3}, we intordece the defination of extended exterior $(K,K^{\prime},\alpha_0)$--quasiconformal mapping and  analyze the asymptotic behavior of the Hessian at infinity. We also collect several useful lemmas for later use in Section~\ref{sec2.4}. Section~\ref{sec3} is devoted to the proofs of Theorems~\ref{D2ue} and~\ref{ng3r}. In Section~\ref{sec4}, we construct generalized symmetric subsolutions and supersolutions of the Lagrangian mean curvature equation. Finally, Theorem~\ref{tedp} is proved via Perron's method in Section~\ref{sec5}. 

\section{Preliminaries}\label{sec2}
\subsection{Notation}\label{sec2.1}

For any \( M \in \mathcal{S}^{n \times n}  \), if \( m_1, m_2, \cdots, m_n \) are the eigenvalues of \( M \) (usually, the assumption \( m_1 \leq m_2 \leq \cdots \leq m_n \) is added for convenience), we will denote this fact briefly by \( \lambda(M) = (m_1, m_2, \cdots, m_n) \) and call \( \lambda(M) \) the eigenvalue vector of \( M \). For convenience, we write often \(\lambda(I) = (1, 1, \cdots, 1) \).

Denote 
\[
\mathcal{M}_+ = \{ M\in \mathcal{S}^{n \times n} \mid \lambda(M)>0 \}.
\] 
  For \(A \in \mathcal{M}_+\) and \( \rho > 0 \), we denote by
\[
E_\rho :=  \{ x \in \mathbb{R}^n \mid r_A(x) < \rho \}
\]
the ellipsoid  with respect to \( A \), where we set \( r_A(x) := \sqrt{x^\mathsf{T} A x} \).


 If \(\Phi(x) := \phi(r)\) with \(\phi \in C^2\), \(r_{A}(x)= \sqrt{\sum_{i=1}^n a_ix_i^2}\) and $(a_1, a_2, \cdots, a_n)=\lambda(A)$, 
 we may call \(\Phi\) a generalized radially symmetric function with respect to \(A\in \mathcal{M}_+\) (see \cite{BLL2014} for more details), a direction computation yields
 \begin{equation}
 	\label{gra}
 	D_{ij}\Phi(x) = a_ih\delta_{ij} + r^{-1}h^{'}(a_i x_i)(a_j x_j), \quad \forall 1 \leq i, j \leq n,
 \end{equation}
where $h(r)=\phi'(r)/r$.

\subsection{Lewy rotation in supercritical phase}\label{sec2.2}

  Throughout this paper, we may assume that $\theta+f(x)> (n-2)\pi/2+2\delta$,  $|f(x)|\leq \delta$ for some positive constant $\delta$ and $|x|$ large.
  
  As in \cite{LLY2020,Y2002,Yuan2006},  we first make a transformation of the solution, so that the Hessian of the new potential $\tilde{u}$ is bounded.  Selecting the rotation angle $\vartheta=\delta/n$, let 
\begin{equation}\label{rtfm}
	\tilde{x} = \mathfrak{c}x + \mathfrak{s} Du(x), \quad \tilde{y} = -\mathfrak{s}x + \mathfrak{c} Du(x), 
\end{equation}
where \((\mathfrak{c},\mathfrak{s}) = (\cos\vartheta, \sin\vartheta)\).

 Defined $\tilde{u}(\tilde{x})=\int\limits^{\tilde{x}}\tilde{y} d\tilde{x}$.  Then   $D_{\tilde{x}}\tilde{u}=\tilde{y}$,
 \[D_{\tilde{x}}^2\tilde{u}=\left(-\mathfrak{s}I+\mathfrak{c} D^2u\right)\left(\mathfrak{c}I+\mathfrak{s}D^2u\right)^{-1},\]
 and $\tilde{u}$ satisfies the equation
 \begin{equation}\label{npeq}
  \sum_{i=1}^n \arctan \lambda_i (D_{\tilde{x}}^2 \tilde{u}) = \tilde{\theta} + \tilde{f} (\tilde{x}, D_{\tilde{x}}\tilde{u} (\tilde{x})) \quad \text{and} \quad |D_{\tilde{x}}^2 \tilde{u}| < \cot \vartheta \quad \text{in} \ \mathbb{R}^n \setminus \overline{\tilde{\Omega}},
 \end{equation}
where $\tilde{\theta}=\theta - \delta>(n-2)\pi/2$,   \( \tilde{f} (\tilde{x}, D_{\tilde{x}}\tilde{u} (\tilde{x})) =f(x)= f(\mathfrak{c}\tilde{x} - \mathfrak{s} D_{\tilde{x}} \tilde{u} (\tilde{x})) \) and $\tilde{\Omega}=\tilde{x}(\Omega)$ is a bounded domain.

Consequently, the right-hand side of \eqref{npeq} lies in the supercritical phase, which ensures that \(F\) is concave in the level set sense and can be modified into a concave operator.  For further details, see \cite[Lemma 2.2]{CPW2017} and \cite{CW2019}. Without ambiguity, we continue to denote the modified operator by $F$ and regard it as concave.

 For the new potential  Eq. \eqref{npeq}, we have the folowing propersition.
 \begin{prop}\label{rtp} 
  Let $l(\tilde{x})=\tilde{f} (\tilde{x}, D_{\tilde{x}}\tilde{u} (\tilde{x}))$.	Suppose the potential function satisfies \eqref{rtfm} and \eqref{npeq},  then
 	\begin{itemize}[label=(\roman*)]
 		\item[(i)]  $|x|\to +\infty$ as $|\tilde{x}|\to +\infty$.
 		\item[(ii)] 
 		  $|D_{\tilde{x}}l|\leq C(n,\delta)|D_{x}f|$.
 		\item[(iii)] 
 	 $|D_{\tilde{x}}^2l|\leq C(n,\delta)\left(|D_{x}f||D_{\tilde{x}}^3\tilde{u}|+|D_{x}^2f|\right)$.
 		\item[(iv)]   
 		        $|D_{\tilde{x}}^3l|\leq C(n,\delta)\left(|D_{x}f||D_{\tilde{x}}^4\tilde{u}|+|D_{x}^2f||D_{\tilde{x}}^3\tilde{u}|+|D_{x}^3f|\right)$.    
 		\item[(v)] If there exist some constant symmetric matrix $\tilde{A}$ satisfying $F(\tilde{A})=\tilde{\theta}$, $\tilde{b}\in \mathbb{R}^n$ and $\xi>0$ such that
 		\[D_{\tilde{x}}\tilde{u}(\tilde{x})=\tilde{A}\tilde{x}+\tilde{b}+O_1(|\tilde{x}|^{-\xi})\quad \text{as}\ |\tilde{x}|\to +\infty.\]
 		Then 
 		 $D_x^2u(x)\to A$ as $|x|\to\infty$ with
 		\[A=\left(\mathfrak{s}I+\mathfrak{c}\tilde{A}\right)\left(\mathfrak{c}I-\mathfrak{s}\tilde{A}\right)^{-1}.\]
 	\end{itemize}
 \end{prop}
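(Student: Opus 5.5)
The plan is to work directly with the inverse of the Lewy rotation \eqref{rtfm}, namely
\[
x=\mathfrak{c}\tilde{x}-\mathfrak{s}D_{\tilde{x}}\tilde{u}(\tilde{x}),\qquad Du(x)=\mathfrak{s}\tilde{x}+\mathfrak{c}D_{\tilde{x}}\tilde{u}(\tilde{x}).
\]
Set $\tilde{M}:=D^2_{\tilde{x}}\tilde{u}$. The bound $|\tilde{M}|<\cot\vartheta$ from \eqref{npeq} implies $\mathfrak{c}I-\mathfrak{s}\tilde{M}\ge (\mathfrak{c}-\mathfrak{s}|\tilde M|)I>0$. One should be a bit careful here: $\mathfrak{c}-\mathfrak{s}\cot\vartheta=0$, so the lower bound must come from the strict inequality. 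It is cleaner to use instead $D^2u>-\cot\vartheta$ in the original variables, since $\mathfrak{c}I+\mathfrak{s}D^2u>0$ is exactly the condition for \eqref{rtfm} to be a diffeomorphism. Differentiating gives
\[
\frac{\partial x}{\partial\tilde{x}}=\mathfrak{c}I-\mathfrak{s}\tilde{M}=(\mathfrak{c}I+\mathfrak{s}D^2u)^{-1}.
\]
Since the eigenvalues of $\tilde{M}$ are $\tan(\arctan\lambda_i-\vartheta)\in(-\cot\vartheta,\cot\vartheta)$, every entry of $\partial x/\partial\tilde{x}$ is bounded by a constant $C(n,\delta)$. This is the only structural input needed for (ii)--(iv).

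\textbf{Part (i).} Since $|\tilde{M}|\le\cot\vartheta$, the function $D_{\tilde{x}}\tilde{u}$ is Lipschitz, so $|D_{\tilde{x}}\tilde{u}(\tilde{x})|\le C(1+|\tilde{x}|)$. This alone does not give $|x|\to\infty$. Instead I would use the reverse map: $\tilde{x}=\mathfrak{c}x+\mathfrak{s}Du(x)$ is a bijection from the exterior region to $\mathbb{R}^n\setminus\overline{\tilde\Omega}$. If $|x|$ stayed bounded along a sequence with $|\tilde{x}|\to\infty$, then the $x$-values would lie in a compact annular region $\overline{B_R}\setminus\Omega$, where $Du$ is continuous and hence bounded. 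That forces $\tilde{x}$ to be bounded, a contradiction. So (i) reduces to $u$ being smooth up to $\partial\Omega$ (or the region near $\Omega$ being excluded), which I take for granted.

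\textbf{Parts (ii)--(iv).} Write $l(\tilde x)=f(x(\tilde x))$ and apply the chain rule:
\[
D_{\tilde x}l=(\mathfrak{c}I-\mathfrak{s}\tilde M)\,D_xf .
\]
This gives (ii) from the bound on $\partial x/\partial\tilde x$. Differentiating once more,
\[
D^2_{\tilde x}l=(\mathfrak{c}I-\mathfrak{s}\tilde M)\,D_x^2f\,(\mathfrak{c}I-\mathfrak{s}\tilde M)-\mathfrak{s}\,D^3_{\tilde x}\tilde u\ast D_xf,
\]
which is (iii). A third differentiation produces terms of the form $D^3_xf\cdot(\partial x)^3$, $D^2_xf\cdot D^3\tilde u\cdot\partial x$, and $D_xf\cdot D^4\tilde u$, which is exactly (iv). The bookkeeping is routine; the constants depend only on $n$ and $\delta$ through $\vartheta=\delta/n$.

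\textbf{Part (v).} By hypothesis $\tilde M(\tilde x)\to\tilde A$, and $|\tilde x|\to\infty$ as $|x|\to\infty$ by the argument of (i) run in reverse ($\tilde x$ bounded forces $x$ bounded, by the Lipschitz bound on $D\tilde u$). The inversion formula is
\[
D^2u=(\mathfrak{s}I+\mathfrak{c}\tilde M)(\mathfrak{c}I-\mathfrak{s}\tilde M)^{-1}.
\]
It remains to check that $\mathfrak{c}I-\mathfrak{s}\tilde A$ is invertible. The relation $F(\tilde A)=\tilde\theta$ puts $\tilde A$ in the supercritical level set, and there each eigenvalue satisfies $\arctan\tilde a_i>\tilde\theta-(n-1)\pi/2>-\pi/2$. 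The concern is the upper side, $\arctan\tilde a_i<\pi/2-\vartheta$, which must hold to keep $\mathfrak{c}-\mathfrak{s}\tilde a_i\neq0$. I expect this step to be the main obstacle. Passing to the limit in $|\tilde M|<\cot\vartheta$ only gives $\tilde a_i\le\cot\vartheta$, which is not strict. To get strictness I would use the equation: if some $\tilde a_i=\cot\vartheta$, then the corresponding original eigenvalues $\lambda_i(D^2u)$ blow up, so $D^2u$ would be unbounded along the sequence. I would exclude that by a uniform Hessian bound for $u$ near infinity, either assumed or taken from the interior estimates. Once invertibility holds, continuity of the inversion formula gives $D^2u\to A$.
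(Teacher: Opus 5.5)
Your treatment of (i)--(iv) is fine and is the direct chain-rule computation the paper intends. For (i) you use boundedness of $Du$ on bounded sets, after discarding a neighborhood of $\partial\Omega$, where the paper quotes the global diffeomorphism from \cite{LLY2020}.

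The gap is in (v), at the step you flag yourself: you never show that $\mathfrak{c}I-\mathfrak{s}\tilde A$ is invertible, and neither remedy you propose is available.
\begin{itemize}
\item A uniform bound on $D^2u$ is not a hypothesis. Assuming it is also circular for how (v) is used: in the proof of Theorem \ref{ng3r} and in Step 3 of the proof of Theorem \ref{D2ue}, boundedness of $D^2u$ is exactly what is \emph{extracted} from (v).
\item Interior Hessian estimates do not supply it. Rescaled to balls of radius comparable to $|x|$, they need a growth bound such as $|Du(x)|\le C(1+|x|)$, which is not assumed. Since $\tilde x=\mathfrak{c}x+\mathfrak{s}Du(x)$, that bound is equivalent to $|\tilde x|\le C(1+|x|)$. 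This follows from the invertibility of $\mathfrak{c}I-\mathfrak{s}\tilde A$ but is not available before it.
\end{itemize}

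What you discarded is the zeroth-order part of the hypothesis. From $D^2_{\tilde x}\tilde u\to\tilde A$ alone you only get $x=(\mathfrak{c}I-\mathfrak{s}\tilde A)\tilde x+o(|\tilde x|)$, which cannot exclude a kernel. The full expansion gives
\[
x=\mathfrak{c}\tilde x-\mathfrak{s}D_{\tilde x}\tilde u(\tilde x)=(\mathfrak{c}I-\mathfrak{s}\tilde A)\tilde x-\mathfrak{s}\tilde b+O(|\tilde x|^{-\xi})\qquad\text{as }|\tilde x|\to\infty .
\]
Suppose $(\mathfrak{c}I-\mathfrak{s}\tilde A)e=0$ with $|e|=1$. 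Since the matrix is symmetric, $e\cdot x=-\mathfrak{s}\,e\cdot\tilde b+o(1)$ whenever $|\tilde x|$ is large.

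Now follow the ray $x=te$. Because $\arctan\lambda_i(D^2u)\in(-\pi/2+2\delta,\pi/2)$ near infinity, the eigenvalues $\cos(\arctan\lambda_i-\vartheta)/\cos(\arctan\lambda_i)$ of $\mathfrak{c}I+\mathfrak{s}D^2u$ exceed $\sin\vartheta$. Hence $\frac{d}{dt}\bigl(e\cdot\tilde x(te)\bigr)=e^{\mathsf T}(\mathfrak{c}I+\mathfrak{s}D^2u)e\ge\sin\vartheta$, so $|\tilde x(te)|\to\infty$. The expansion then forces $t=e\cdot x$ to stay bounded, a contradiction.

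So $\mathfrak{c}I-\mathfrak{s}\tilde A$ is invertible, your continuity argument gives $D^2u\to A$, and boundedness of $D^2u$ near infinity comes out as a consequence rather than an input. The same ray estimate, started on a sphere enclosing $\overline\Omega$ where $Du$ is bounded, also gives $|\tilde x|\to\infty$ as $|x|\to\infty$ in every direction. This is cleaner than your appeal to the Lipschitz bound on $D_{\tilde x}\tilde u$, which would need path-length control in $\mathbb{R}^n\setminus\overline{\tilde\Omega}$.
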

 \begin{proof}
Since the map $x \mapsto \tilde{x}$ is a diffeomorphism from $\mathbb{R}^n$ onto itself (see \cite{LLY2020}), (i) is obviously.

 (ii)--(iv) can be shown via a direct computation.   (v) is an arguement in \cite[Section 3.1]{LLY2020}.
 \end{proof}
 
 By the asymptotic behavior of \eqref{abf} and Eq. \eqref{npeq}, we have
 \begin{lem}\label{abf11}
Suppose  $f$ satisfies \eqref{abf} and \eqref{Ducond}. If the potential function $\tilde{u}$ satisfies \eqref{rtfm} and  \eqref{npeq},  then
 	\begin{equation}
 		l(\tilde{x})=O_{k}\left(|\tilde{x}|^{-\beta}\right) \quad \text{as} \ |\tilde{x}|\to \infty,\quad k=0,1,2,3.
 	\end{equation}
 \end{lem}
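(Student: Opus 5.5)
We prove Lemma \ref{abf11} in two steps. First we show that $|\tilde{x}|$ and $|x|$ are comparable near infinity in the range where $f$ is not negligible. Then we feed \eqref{abf} and \eqref{Ducond} into the chain-rule bounds of Proposition \ref{rtp}(ii)--(iv).

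\textbf{Step 1: comparability of $|\tilde x|$ and $|x|$.} Since $\tilde{x}=\mathfrak{c}x+\mathfrak{s}Du(x)$, we have
\[
|\tilde{x}|\le |x|+|Du(x)|, \qquad |x|\le C(|\tilde{x}|+|Du(x)|).
\]
The second bound uses $x=\mathfrak{c}\tilde x-\mathfrak{s}D_{\tilde x}\tilde u$ together with $\tilde y=-\mathfrak{s}x+\mathfrak{c}Du$. This alone does not give comparability, because $|Du|$ may be large. Condition \eqref{Ducond} with $l=0$ gives
\[
|Du(x)|\,|f(x)|^{1/\beta}\le C_0 .
\]
Combined with $|f(x)|\le C|x|^{-\beta}$, the natural quantity to estimate is $|f(x)|\,|\tilde{x}|^{\beta}$. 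We bound it as
\[
|f(x)|\,|\tilde{x}|^{\beta}\le C|f(x)|\bigl(|x|^{\beta}+|Du(x)|^{\beta}\bigr)\le C\bigl(|x|^{-\beta}|x|^{\beta}+C_0^{\beta}\bigr)\le C .
\]
So $|l(\tilde x)|=|f(x)|\le C|\tilde x|^{-\beta}$, using Proposition \ref{rtp}(i) to pass between the limits $|x|\to\infty$ and $|\tilde x|\to\infty$. The key point is that we never need $|x|\sim|\tilde x|$. We only need the weighted inequality $|D^lf(x)|\,|\tilde x|^{\beta+l}\le C$, and this follows in the same way from $|\tilde x|^{\beta+l}\le C(|x|^{\beta+l}+|Du|^{\beta+l})$, the bound $|D^lf|\le C|x|^{-\beta-l}$ from \eqref{abf}, and \eqref{Ducond} for each $l$. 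Hence
\[
|D_x^lf(x)|\le C|\tilde x|^{-\beta-l},\qquad l=0,1,2,3 .
\]

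\textbf{Step 2: the derivatives of $l$.} By Proposition \ref{rtp}(ii), the case $k=1$ follows at once: $|D_{\tilde x}l|\le C|D_xf|\le C|\tilde x|^{-\beta-1}$. For $k=2,3$, Proposition \ref{rtp}(iii)--(iv) shows that we also need
\[
|D^3_{\tilde x}\tilde u|=O(|\tilde x|^{-1}),\qquad |D^4_{\tilde x}\tilde u|=O(|\tilde x|^{-2}).
\]
These follow by scaling from \eqref{npeq}. For $|\tilde x_0|=2R$, set
\[
\tilde u_R(z)=R^{-2}\tilde u(\tilde x_0+Rz),\qquad z\in B_1 .
\]
Then $|D^2\tilde u_R|<\cot\vartheta$, and $\tilde u_R$ solves a concave (after the modification via \cite{CPW2017}) uniformly elliptic equation with right-hand side $\tilde\theta+l(\tilde x_0+Rz)$. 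This right-hand side is bounded in $C^{1}$, since $R|D l|\le C$ by the $k=1$ bound. Evans--Krylov then gives $C^{2,\alpha}$ bounds. Differentiating the equation and applying Schauder theory bootstraps this to bounded $C^{3,\alpha}$ and then $C^4$ norms of $\tilde u_R$. For the $C^4$ bound we also need $R^2|D^2l|\le C$, and we obtain it within the same bootstrap from (iii) using the $C^3$ bound. Scaling back gives $|D^k\tilde u|\le CR^{2-k}$ for $k=3,4$. Substituting into (iii) and (iv), each term is $O(|\tilde x|^{-\beta-k})$.

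\textbf{Main obstacle.} The main obstacle is Step 1. Without \eqref{Ducond}, the point $\tilde x$ may be far from infinity even when $x$ is large, and the decay of $f$ in $x$ would not transfer to decay in $\tilde x$. The exponent $1/(\beta+l)$ in \eqref{Ducond} is designed exactly so that the $|Du|^{\beta+l}$ term is absorbed. A secondary point is ordering the bootstrap in Step 2 so that each estimate on $D^k l$ is used only after the regularity of $\tilde u$ it depends on is available.
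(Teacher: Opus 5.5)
Your proposal is correct and follows the paper's proof essentially step for step. It uses the weighted bound $|D_x^lf|\,|\tilde x|^{\beta+l}\le C\bigl(|x|^{\beta+l}+|Du|^{\beta+l}\bigr)|D_x^lf|\le C$ from \eqref{abf} and \eqref{Ducond}, Proposition~\ref{rtp}(ii) for $k=1$, and then a rescaled Evans--Krylov/Schauder bootstrap giving $|D^3_{\tilde x}\tilde u|=O(|\tilde x|^{-1})$ and $|D^4_{\tilde x}\tilde u|=O(|\tilde x|^{-2})$, which are fed into Proposition~\ref{rtp}(iii)--(iv). The only difference is cosmetic: you treat the rescaled right-hand side as $\tilde\theta+l(\tilde x_0+Rz)$, controlled through bounds on $D l$, whereas the paper writes it as $\hat f(\tilde y, D\tilde u_R)$ and bounds its partial derivatives in $\tilde y$ and $\tilde p$ directly; either way, the Schauder steps need H\"older (not just sup) control of $R\,Dl$ and $R^2D^2l$, which comes from the Evans--Krylov $C^{2,\alpha}$ bound, the preceding $C^{3,\alpha}$ bound, and the $l=2,3$ cases of \eqref{Ducond}.
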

\begin{proof}
	In view of \eqref{abf},  there exists large $R_1\geq R_0$ and constant $C_1$ such that
	\[|x|^{\beta+k}|D_x^kf(x)|\leq C_1,\quad \forall |x|\geq R_1.\]
	Therefore, by \eqref{Ducond}, \eqref{rtfm} and Propsition \ref{rtp}(i), there exists large $R_2$ such that for $|\tilde{x}|\geq R_2$, we have $|x|\geq R_1$ and 
	\[|l(\tilde{x})||\tilde{x}|^{\beta}\leq C(n,\beta,\delta)\left(|x|^{\beta}+|Du(x)|^{\beta}\right)|f(x)|\leq C(n,\beta,\delta,C_0,C_1).\]
	
	Similarly,  by \eqref{Ducond}, \eqref{rtfm} and Propsition \ref{rtp}(ii), there exists large $R_2$ such that for $|\tilde{x}|\geq R_2$,
	\[|D_{\tilde{x}}l(\tilde{x})||\tilde{x}|^{\beta+1}\leq C(n,\beta,\delta)\left(|x|^{\beta+1}+|Du(x)|^{\beta+1}\right)|D_x f|\leq C(n,\beta,\delta,C_0,C_1).\]
	
	For sufficiently large $R:= |\tilde{x}|\geq R_2 \gg 1$, set
	\[\tilde{u}_{R}(\tilde{y}):=\left(\frac{4}{R}\right)^{2}\tilde{u}\left(\tilde{x}+\frac{R}{4}\tilde{y}\right)\quad \text{in} \ B_{2}.\]
	Then,
	\begin{equation}\label{eqR}
		\begin{aligned}
				F(D_{\tilde{y}}^{2}\tilde{u}_{R}(\tilde{y}))=F\left(D_{\tilde{x}}^{2}\tilde{u}\left({\tilde{x}}+\frac{R}{4}\tilde{y}\right)\right)&=\tilde{\theta}+\tilde{f}\left({\tilde{x}}+\frac{R}{4}\tilde{y},D_{\tilde{x}}\tilde{u}\left({\tilde{x}}+\frac{R}{4}\tilde{y}\right)\right)\\
				&=\tilde{\theta}+\tilde{f}\left({\tilde{x}}+\frac{R}{4}\tilde{y},\frac{R}{4}D_{\tilde{y}}\tilde{u}_{R}\left(\tilde{y}\right)\right)\\
				&=:\hat{f}(\tilde{y},D_{\tilde{y}}\tilde{u}_{R}(\tilde{y})).
		\end{aligned}
	\end{equation}
	
	For $\tilde{p}=D_{\tilde{y}}\tilde{u}_{R}$, by \eqref{abf}, \eqref{Ducond} and  \eqref{npeq}, it's easy to see that
	\[\frac{\partial \hat{f}}{\partial \tilde{y}_i}=\frac{R\mathfrak{c}}{4}f_i, \quad  \frac{\partial \hat{f}}{\partial \tilde{p}_i}=-\frac{R\mathfrak{s}}{4}f_i, \ i=1,\cdots,n,\]
	and 
	\[\frac{\partial^2 \hat{f}}{\partial \tilde{y}_i\partial \tilde{y}_j}=\frac{R^2\mathfrak{c}^2}{16}f_{ij},\quad \frac{\partial^2 \hat{f}}{\partial \tilde{y}_i\partial \tilde{p}_j}=-\frac{R^2\mathfrak{c}\mathfrak{s}}{16}f_{ij},\quad \frac{\partial^2 \hat{f}}{\partial \tilde{p}_i\partial \tilde{p}_j}=\frac{R^2\mathfrak{s}^2}{16}f_{ij},\ i,j=1,\cdots,n, \] 
	  is bounded by $C_0$ and $C_1$.
	
	Note that  $\|\tilde{u}_{R}\|_{C^2(B_2)}$ is bounded by \eqref{npeq}, and  \( F \) is concave. Then by  by the Evans--Krylov estimates and the Schauder theory in \cite{GT},
	\[|D_{\tilde{y}}^{3}\tilde{u}_{R}(\tilde{y})|\leq C(n,\delta,C_1),\quad \tilde{y}\in B_{\frac{1}{2}}.\]
	
	Combine the \eqref{Ducond}, \eqref{rtfm} and Propsition \ref{rtp}(iii),  there exists large $R_3$ such that for $|\tilde{x}|\geq R_3$, $|D_{\tilde{x}}^{3}\tilde{u}(\tilde{x})||\tilde{x}|$ is bounded, and  
	\begin{align*}
		|D_{\tilde{x}}^2l(\tilde{x})||\tilde{x}|^{\beta+2}&\leq C(n,\beta,\delta)\left(|x|^{\beta+2}+|Du(x)|^{\beta+2}\right)|D_x^2f|\\
		&\quad +C(n,\beta,\delta)|\tilde{x}|^{\beta+1}|D_xf||\tilde{x}||D_{\tilde{x}}^{3}\tilde{u}(\tilde{x})|\\
		&\leq  C(n,\beta,\delta,C_0,C_1).
	\end{align*}
	
	Similarly, 
	$|D_{\tilde{y}}^{4}\tilde{u}_{R}(\tilde{y})|\leq C(n,\delta,C_1)$, 
$ \tilde{y}\in B_{\frac{1}{4}}$,  and there exists large $R_4$ such that for $|\tilde{x}|\geq R_4$, $|D_{\tilde{x}}^{4}\tilde{u}(\tilde{x})||\tilde{x}|^2$ is bounded.

Therefore,  \eqref{Ducond}, \eqref{rtfm} and Propsition \ref{rtp}(iv) imply
  	\begin{align*}
  	|D_{\tilde{x}}^3l(\tilde{x})||\tilde{x}|^{\beta+3}&\leq C(n,\beta,\delta)\left(|x|^{\beta+3}+|Du(x)|^{\beta+3}\right)|D_x^3f|\\
  	&\quad +C(n,\beta,\delta)|\tilde{x}|^{\beta+2}|D_x^2f||\tilde{x}||D_{\tilde{x}}^{3}\tilde{u}(\tilde{x})|\\
  	&\quad +C(n,\beta,\delta)|\tilde{x}|^{\beta+1}|D_xf||\tilde{x}|^2|D_{\tilde{x}}^{4}\tilde{u}(\tilde{x})|\\
  	&\leq  C(n,\beta,\delta,C_0,C_1).
  \end{align*}
	This proves the desired result.
\end{proof}

\subsection{The limit of Hessian at infinity}\label{sec2.3}

To get the limit of $D^2u$ in $\mathbb{R}^2$, quasiconformal mappings is an useful tool.  Let’s begin with the definition of   extended exterior  $(K,K^{\prime},\alpha_0)$ quasiconformal mappings in $\mathbb{R}^2\setminus \overline{\Omega}$.
\begin{defn}\label{eeq}
	A mapping $w(x)=(p(x),q(x))$ from $\mathbb{R}^2\setminus\overline{\Omega}$ ($\Omega$ is bounded) in $x=(x_1,x_2)$ plane to $w=(p,q)$ plane is extended exterior  $(K,K^{\prime},\alpha_0)$ quasiconformal, if $p,q\in$ $C^{1}\left(\mathbb{R}^{2}\setminus\overline{\Omega}\right)$ and
	\begin{equation}\label{ceeq}
		p_1^2+p_2^2+q_1^2+q_2^2\leq2K(p_1q_2-p_2q_1)+K^{\prime}|x|^{-2\alpha_0}
	\end{equation}
	holds for all $x\in\mathbb{R}^2\setminus\overline{\Omega}$ with some constants $K\geq 1$, $K^{\prime}\geq 0$ and $\alpha_0 >1$, where $p_i=\partial p/\partial x_i,q_i=\partial q/\partial x_i, i=1,2$. 
\end{defn}

For  exterior $K$-quasiconformal mappings, namely, $K^{\prime}=0$ in \eqref{ceeq}, Li--Liu \cite[Theorem 2.2]{LL24} give the H\"{o}lder estimate over
exterior domain and the asymptotic behavior at infinity. Inspired by this, we have  the following  H\"{o}lder estimate and the asymptotic behavior at infinity for our extended exterior  $(K,K^{\prime},\alpha_0)$ quasiconformal mappings.

\begin{thm}\label{abw}
	Let $w= (p, q)$  be extended exterior  $(K,K^{\prime},\alpha_0)$   quasiconformal mappings in $\mathbb{R} ^2\setminus \overline{\Omega }$ $ (\Omega \subset \mathbb{R}^2$ is bounded with
	$K> 1$, and suppose $|w| \leq M$. Then, for any $\Omega^{\prime }\supset \supset \Omega$ with $d=$dist$(\Omega,\partial\Omega^{\prime})$,
	 $w(x)$  tends to a limit $w( \infty )$ at infnity such that
	$$|w(x)-w(\infty)|\leq C|x|^{-\alpha} \quad  \text{for any} \ x\in\mathbb{R}^2 \setminus\overline{\Omega^{\prime}},$$
	where $\alpha=\min \left\{K-(K^2-1)^{\frac12}, \alpha_0-1\right\}$, $C$ depends only on $K, K^{\prime}, d$ and $M$. If $K-(K^2-1)^{\frac12}=\alpha_0-1$, the exponent $\alpha$ must be replaced by $K-(K^2-1)^{\frac12}-\epsilon$, where $\epsilon$ is an arbitrary small
	positive number.  If $K^{\prime}=0$, it's also valid for $K=1$ and then $C$ depends only on $K, d$ and $M$.
\end{thm}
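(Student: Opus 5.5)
We follow the classical Morrey-type argument for $K$-quasiconformal mappings (as in \cite[Chapter 12]{GT} and Li--Liu \cite[Theorem 2.2]{LL24}), adapted to the inhomogeneous term $K'|x|^{-2\alpha_0}$. The plan has three steps: a Dirichlet integral decay estimate on annuli and exteriors of large balls, an oscillation bound on circles via Morrey's lemma, and a telescoping argument to identify $w(\infty)$.

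\textbf{Step 1: energy identity.} Set $D(r):=\int_{\mathbb{R}^2\setminus B_r}|Dw|^2\,dx$, defined for $r\ge r_0$ with $B_{r_0}\supset\Omega'$. First show that $D(r_0)<\infty$. Take cutoffs on annuli $B_R\setminus B_{r_0}$, integrate \eqref{ceeq}, and write the Jacobian $p_1q_2-p_2q_1$ as a divergence, $\operatorname{div}(p\,(q_2,-q_1))$. This bounds $\int|Dw|^2$ over the annulus by $C K M\int_{\partial}|Dw|$ plus $K'\int |x|^{-2\alpha_0}$. The last term is finite because $\alpha_0>1$. Combined with an interior Caccioppoli-type bound near $\partial B_{r_0}$ (depending on $d$ and $M$), a standard argument then gives $D(r_0)\le C(K,K',d,M)$.

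For $r\ge r_0$ we then use
\[
\int_{|x|>r}(p_1q_2-p_2q_1)\,dx=-\int_{\partial B_r}p\,\partial_\tau q\,ds
\]
(the contribution at infinity vanishes along a sequence since $D<\infty$). Subtracting the mean $\bar p$ of $p$ on $\partial B_r$ and applying Wirtinger's inequality on the circle gives
\[
\Big|\int_{\partial B_r}(p-\bar p)\,\partial_\tau q\Big|\le \frac{r}{2}\int_{\partial B_r}\big(|\partial_\tau p|^2+|\partial_\tau q|^2\big)\le \frac r2\int_{\partial B_r}|Dw|^2 .
\]
The sharp form for the quasiconformal pair, using the $K$-inequality pointwise on the tangential part as in \cite{GT}, improves this to the constant $\frac{r}{2\gamma}$, where $\gamma=K-\sqrt{K^2-1}$ satisfies $2\gamma K\cdot$(optimal) $=1$. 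This yields the differential inequality
\[
D(r)\le -\frac{r}{2\gamma}D'(r)+C K' r^{2-2\alpha_0},
\]
that is, $\big(r^{2\gamma}D\big)'\le C r^{2\gamma+1-2\alpha_0}$ (up to sign conventions).

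\textbf{Step 2: solve the ODE.} Integrating from $r$ to $\infty$, or from $r_0$ depending on the sign of $2\gamma+2-2\alpha_0$, gives
\[
D(r)\le C r^{-2\min\{\gamma,\alpha_0-1\}},
\]
with an extra factor $\ln r$ when $\gamma=\alpha_0-1$; this logarithm is absorbed by an $\epsilon$ loss in the exponent. Step 2 is where the minimum $\alpha=\min\{\gamma,\alpha_0-1\}$ emerges. When $K'=0$ the ODE is homogeneous, so the case $K=1$ (i.e.\ $\gamma=1$) is also allowed.

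\textbf{Step 3: oscillation and the limit.} Morrey's Dirichlet growth lemma applied on the annuli $B_{2r}\setminus B_{r/2}$ gives $\operatorname{osc}_{\partial B_r}w\le C\,D(r/2)^{1/2}\le Cr^{-\alpha}$. Alternatively, pick a good radius $\rho\in(r,2r)$ with $\rho\int_{\partial B_\rho}|Dw|^2\le C D(r)$ and bound the oscillation on that circle by Cauchy--Schwarz, then handle intermediate points using the interior Hölder estimate for quasiconformal maps on scaled annuli. The scaled inhomogeneity $K'|x|^{-2\alpha_0}r^2$ is harmless there.

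To identify the limit, compare the circle means $\bar w(r)$ on dyadic radii. By the fundamental theorem of calculus in the radial direction and Cauchy--Schwarz,
\[
|\bar w(2r)-\bar w(r)|\le C\Big(\int_{B_{2r}\setminus B_r}|Dw|^2\Big)^{1/2}\le Cr^{-\alpha}.
\]
Since $\alpha>0$, the sum over dyadic radii converges. Hence $\bar w(2^k r_0)$ is Cauchy with limit $w(\infty)$, and $|w(x)-w(\infty)|\le Cr^{-\alpha}$ for $|x|=r$, with constants depending only on $K,K',d,M$.

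\textbf{Main obstacle.} The main obstacle is Step 1, namely obtaining the sharp constant that produces the exponent $K-\sqrt{K^2-1}$. The plan is to reproduce the argument of \cite[Theorem 2.2]{LL24}, or the interior version in \cite[Theorem 12.3]{GT}, verbatim and keep track of the extra $K'$ term as a source term in the ODE. If the sharp constant is delicate, we will first accept any positive exponent and then refine.
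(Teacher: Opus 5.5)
Your plan is correct, but it takes a genuinely different route from the paper's. The paper never argues at infinity. It applies the Kelvin transform (Lemma \ref{klem}), notes that $\tilde w=(\tilde q,\tilde p)$ (components swapped to undo the orientation reversal) satisfies the $(K,K')$ inequality on a punctured disc with singular source $K'|x|^{-2(2-\alpha_0)}$, and then quotes Finn--Serrin (Theorem \ref{fsthm}). That theorem gives removability of the puncture and H\"older continuity at $0$ with exponent $\min\{\mu,1-\lambda_0\}=\min\{K-\sqrt{K^2-1},\alpha_0-1\}$, which transforms back to the claimed decay; boundedness of $w$ supplies \eqref{pcond}. You instead redo the Morrey--Finn--Serrin energy argument directly in the exterior. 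Because the Dirichlet integral is conformally invariant in the plane, the two arguments correspond exactly:
\begin{itemize}
\item your $D(r)$ is the energy of $\tilde w$ on $B_{1/r}$;
\item your inequality $(r^{2\gamma}D)'\le Cr^{2\gamma+1-2\alpha_0}$ is the Kelvin image of the interior inequality behind Theorem \ref{fsthm};
\item your finite-energy step (Caccioppoli plus scale invariance of $\int|\nabla\eta|^2$ in two dimensions) plays the role of \eqref{pcond}.
\end{itemize}
The paper's route is a three-line reduction resting entirely on the cited theorem. Yours is self-contained, shows where each of the two competing exponents comes from, and gives $\int_{|x|>r}|Dw|^2\le Cr^{-2\alpha}$ as a by-product. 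The cost is reproving the sharp-constant and Morrey steps. Note also that you should integrate the ODE from $r_0$ to $r$ in all cases; integrating from $r$ to $\infty$ bounds $D(r)$ from below, not above.

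Two points need more precision.

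\emph{The sharp constant.} Write $X=|\partial_\nu w|$ and $Y=|\partial_\tau w|$. Then $J\le XY$, so $X^2+Y^2\le 2KXY+K'|x|^{-2\alpha_0}$. Factoring $X^2-2KXY+Y^2=(\gamma Y-X)(\gamma^{-1}Y-X)$ and treating the case $X<\gamma Y$ separately gives, on $\partial B_r$,
\[
2K\gamma\,Y^2=(1+\gamma^2)Y^2\le |Dw|^2+\frac{\gamma}{\sqrt{K^2-1}}\,K'|x|^{-2\alpha_0}.
\]
Combined with Wirtinger, this yields exactly $D(r)\le-\frac{r}{2\gamma}D'(r)+CK'r^{2-2\alpha_0}$. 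The factor $\gamma/\sqrt{K^2-1}$ is where $K>1$ is needed when $K'\neq0$, mirroring the hypothesis of Theorem \ref{fsthm}. So your fallback of ``any positive exponent'' is unnecessary, and it would not prove the statement anyway.

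\emph{The oscillation step.} Total energy on an annulus does not by itself control oscillation. The rescaled map $w_r(z)=w(rz)$ is $(K,CK'r^{2-2\alpha_0})$-quasiconformal on $\{1/2<|z|<4\}$. For it you need the energy form
\[
\mathrm{osc}_{\{1\le|z|\le2\}}w_r\le C\big(\|Dw_r\|_{L^2}+\sqrt{K'}\,r^{1-\alpha_0}\big).
\]
You get this by running the same disc inequality on small discs inside the annulus and then applying Morrey's lemma. The sup-norm form of the interior H\"older estimate would be circular here.
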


To prove Theorem \ref{abw}, we need the following H\"{o}lder continuity of  $(K,K^{\prime})$  quasiconformal mappings with singularities in bounded domain.
\begin{thm}[\cite{FS58}]\label{fsthm}
	Let $w=(p,q)$ be $(K,K^{\prime})$   be a continuously differentiable mapping defned	in the domain $0< |x| \leq 1$, namely,
	\begin{equation}\label{FSthm4}
		p_1^2+p_2^2+q_1^2+q_2^2\leq 2K(p_1q_2-p_2q_1)+K^{\prime}|x|^{-2\lambda_0},
	\end{equation}
	$K$, $K^{\prime}$, and $\lambda_0$ are constants, $K>1$, $K_{1}\geq0$, and $\lambda_0<1$. Also assume
	\begin{equation}\label{pcond}
		p=o(|x|^{-\mu}) \quad \text{as} \ x\to 0,
	\end{equation}
	where $\mu=K-(K^{2}-1)^{1/2}$. Then $w$ can be defined at $x=0$ so that the resulting function is continuous in $0\leq|x|\leq1$. Moreover, for any closed domain $\Omega^{\prime}\subset \subset B_1$, $w$ satisfies
	a uniform H\"{o}lder inequality
	$$|w(x)-w(y)|\leq C|x-y|^\alpha,\ x,y\in\Omega^\prime,$$
	where $\alpha=\min \left\{\mu, 1-\lambda_0\right\}$, $C$ depends only on $K, K^{\prime}$, $d=$dist$(\Omega ^{\prime}$, $\partial \Omega)$ and $\mu$. If $\mu=1-\lambda_0$, the exponent must be replaced by $\mu-\epsilon$, where $\epsilon$ is an arbitrary small
	positive number. If $K^{\prime}=0$, it's also valid for $K=1$ and then $C$ depends only on $K$ and $d$.
\end{thm}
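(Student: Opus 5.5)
The plan is the classical Dirichlet-growth argument of Morrey type, adapted by Finn--Serrin to the inhomogeneous term $K'|x|^{-2\lambda_0}$ and to the puncture at the origin. For $0<\varepsilon<r\le 1$ set
\[
D(\varepsilon,r)=\int_{\varepsilon<|x|<r}\bigl(p_1^2+p_2^2+q_1^2+q_2^2\bigr)\,dx .
\]
We have $p_1q_2-p_2q_1=\mathrm{div}(p\,q_2,-p\,q_1)$. Integrating \eqref{FSthm4} over the annulus therefore gives
\[
D(\varepsilon,r)\le 2K\Bigl(\oint_{|x|=r}p\,dq-\oint_{|x|=\varepsilon}p\,dq\Bigr)+C K' r^{2-2\lambda_0}.
\]
Since $\oint dq=0$, we may replace $p$ on each circle by $p-\bar p$, where $\bar p$ is its mean. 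The sharp Wirtinger-type estimate
\[
\Bigl|\oint (p-\bar p)\,q_\vartheta\,d\vartheta\Bigr|\le \tfrac12\oint (p_\vartheta^2+q_\vartheta^2)\,d\vartheta,
\]
combined with an optimal splitting between tangential and normal derivatives (this is where the constant $K$ turns into $\mu=K-\sqrt{K^2-1}$), should give
\[
\Bigl|\oint_{|x|=r}p\,dq\Bigr|\le \frac{r}{4K\mu}\oint_{|x|=r}|\nabla w|^2\,ds .
\]
Hence, once the inner term is disposed of, $D(r):=D(0,r)$ satisfies the differential inequality
\[
D(r)\le \frac{r}{2\mu}D'(r)+CK'r^{2-2\lambda_0}.
\]

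The key step, which I expect to be the main obstacle, is the treatment of the inner boundary term. This is exactly where hypothesis \eqref{pcond} enters. I would first run the same argument on annuli only. This yields an inequality for $D(\varepsilon,r)$ in which the inner circle contributes
\[
\oint_{|x|=\varepsilon}(p-\bar p)\,dq .
\]
That contribution is controlled by $\varepsilon\oint_{|x|=\varepsilon}|\nabla w|^2$, i.e. by $\varepsilon\,\partial_\varepsilon D$. Integrating the resulting inequality shows that $D(\varepsilon,1)$ grows at most like $\varepsilon^{-2\mu}$ unless it is bounded. Using $p=o(\varepsilon^{-\mu})$, I would pick a sequence $\varepsilon_k\to0$ on which $\varepsilon_k\oint_{|x|=\varepsilon_k}|\nabla w|^2\,ds$ is small relative to $\varepsilon_k^{-\mu}$, so that
\[
\Bigl|\oint_{|x|=\varepsilon_k}p\,dq\Bigr|\le \sup_{|x|=\varepsilon_k}|p|\cdot\oint_{|x|=\varepsilon_k}|dq|\to 0.
\]
This sequence choice is made by a measure/averaging argument on $\partial_\varepsilon D$. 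Passing to the limit gives $D(r)<\infty$, and the unrestricted inequality above then holds on $(0,1]$.

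Integrating $\bigl(r^{-2\mu}D\bigr)'\ge -C r^{1-2\lambda_0-2\mu}$ yields
\[
D(r)\le C r^{2\alpha},\qquad \alpha=\min\{\mu,1-\lambda_0\},
\]
with a factor $\log(1/r)$ appearing when $\mu=1-\lambda_0$; this is why the exponent must be lowered by $\epsilon$ in that case. The same argument applies to discs $B_\rho(x_0)\subset B_1\setminus\{0\}$, and also to discs containing $0$. On such discs the weight $|x|^{-2\lambda_0}$ is integrable with $\int_{B_\rho(x_0)}|x|^{-2\lambda_0}\le C\rho^{2-2\lambda_0}$, which gives $\int_{B_\rho(x_0)}|\nabla w|^2\le C\rho^{2\alpha}$ uniformly for $x_0\in\Omega'$ and $\rho<d$. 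Morrey's Dirichlet growth lemma then gives the uniform Hölder bound $|w(x)-w(y)|\le C|x-y|^\alpha$ on $\Omega'\setminus\{0\}$. In particular $w$ extends continuously to $x=0$.

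When $K'=0$ the forcing term is absent, so the argument goes through verbatim with $\alpha=\mu$. When $K=1$ the identity $\mu=1$ is consistent, but then the constant $C$ depends only on $K$ and $d$.
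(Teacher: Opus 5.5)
The paper does not prove this theorem; it quotes it from Finn--Serrin. I am therefore comparing your plan with the classical argument.

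\textbf{What is fine.} The Dirichlet-growth part of your outline is the standard route of Gilbarg--Trudinger, Ch.~12: the annulus identity $\int J=\oint p\,dq$, Wirtinger's inequality on circles, the radial/tangential splitting that turns $K$ into $\mu$, integrating $D\le\frac{r}{2\mu}D'+Cr^{2-2\lambda_0}$, and Morrey's lemma. It needs one correction. When $K'\neq0$ the splitting only gives
\[
2K\mu|w_\tau|^2\le|\nabla w|^2+\tfrac{2\mu^2}{1-\mu^2}K'|x|^{-2\lambda_0},
\]
so your bound for $\oint p\,dq$ carries an extra $O(r^{2-2\lambda_0})$ term. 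This is harmless, but it is exactly where $K>1$ is needed when $K'\neq0$.

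\textbf{The gap: the removability step.} This step is the whole content of the hypothesis $p=o(|x|^{-\mu})$, and it fails in two ways.

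First, the direction is reversed. Set $\Phi(\varepsilon)=D(\varepsilon,1)$. The inner Wirtinger bound gives $\Phi(\varepsilon)\le C+\frac{\varepsilon}{2\mu}\bigl(-\Phi'(\varepsilon)\bigr)$. Hence $\varepsilon^{2\mu}(\Phi(\varepsilon)-C)$ is nonincreasing in $\varepsilon$. So if $\Phi$ is unbounded it grows \emph{at least} like $c\,\varepsilon^{-2\mu}$; this inequality gives no upper bound at all.

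Second, even granting $\Phi(\varepsilon)\le C\varepsilon^{-2\mu}$, a well-chosen circle only yields $\oint_{|x|=\varepsilon_k}|dq|\le\bigl(2\pi\varepsilon_k\oint|\nabla w|^2ds\bigr)^{1/2}\le C\varepsilon_k^{-\mu}$. Then $\sup|p|\cdot\oint|dq|=o(\varepsilon_k^{-2\mu})$, which need not tend to $0$. Making it vanish would require $\oint|dq|=O(\varepsilon^{\mu})$, which is essentially the conclusion you are trying to prove.

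\textbf{How to repair it.} Use the growth of $p$ to produce a competing \emph{upper} bound, and play it against the lower bound above. Put $\eta(\varepsilon)=\varepsilon^{\mu}\sup_{|x|=\varepsilon}|p|\to0$ and $\Psi=\Phi-C$, with $C$ large enough that both inequalities below hold. By Cauchy--Schwarz, wherever $\Psi>0$,
\[
\Psi(\varepsilon)^2\le 8\pi K^2\eta(\varepsilon)^2\varepsilon^{1-2\mu}\bigl(-\Psi'(\varepsilon)\bigr),\quad\text{i.e.}\quad \Bigl(\frac{1}{\Psi}\Bigr)'\ge\frac{\varepsilon^{2\mu-1}}{8\pi K^2\eta(\varepsilon)^2}.
\]
Integrating over $[\varepsilon,2\varepsilon]$ gives $\Psi(2\varepsilon)\le C\bigl(\sup_{[\varepsilon,2\varepsilon]}\eta^2\bigr)\varepsilon^{-2\mu}=o(\varepsilon^{-2\mu})$. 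This contradicts the lower bound $\Psi\ge c_0\varepsilon^{-2\mu}$ unless $\Psi\le0$ near $0$. Hence the Dirichlet integral is finite.

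Only after that does the inner term vanish along a sequence, and by Wirtinger alone. Indeed $\bigl|\oint_{|x|=\varepsilon}p\,dq\bigr|\le\frac{\varepsilon}{2}\oint_{|x|=\varepsilon}|\nabla w|^2ds$, and its $\liminf$ is $0$ because $\int_0^1\oint_{|x|=t}|\nabla w|^2\,ds\,dt<\infty$. With this in place, the rest of your argument goes through, including discs through the origin.

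One further remark. The H\"older constant must also depend on a bound for $w$, such as $\sup|w|$ or the Dirichlet integral. The scaling $w\mapsto tw$ with $K'=0$ shows it cannot depend only on $K$ and $d$, as both your last sentence and the statement suggest.
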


\begin{rem}\label{wbd}
	It's clear that  \eqref{pcond} holds when $w$ is bounded. 
\end{rem}

Next, we intrduce the  Kelvin transform of a scalar function $p(x)$ in $\mathbb{R} ^2 $, 
\[\tilde{p}(x)=p\left(\frac{x}{|x|^2}\right), \quad x\neq 0.\]
And we will show if $w$ is a extended exterior  $(K,K^{\prime},\alpha_0)$ quasiconformal, then  $\tilde{w}:=(\tilde{q},\tilde{p})$ satisfying \eqref{FSthm4}.

\begin{lem}\label{klem}
	Let $w = (p,q)$ be extended exterior $(K,K^{\prime},\alpha_0)$ quasiconformal in $\mathbb{R}^2 \setminus \overline{B}_1(0)$. Let $\tilde{p}$ and $\tilde{q}$ be the Kelvin transform of $p$ and $q$ respectively. Then, $\tilde{w} = (\tilde{q}, \tilde{p})$ satisfying \eqref{FSthm4} with $\lambda_0=2-\alpha_0$ in $B_1(0)\setminus\{0\}$.
\end{lem}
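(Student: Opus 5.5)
The plan is a direct computation based on the fact that the inversion $y=x/|x|^2$ is an anticonformal map of $B_1(0)\setminus\{0\}$ onto $\mathbb{R}^2\setminus\overline{B}_1(0)$. Its Jacobian matrix is
\[
D_x y=|x|^{-2}\Bigl(I-\frac{2xx^{\mathsf T}}{|x|^2}\Bigr)=|x|^{-2}R(x),
\]
where $R(x)$ is an orthogonal reflection. Hence $\det D_xy=-|x|^{-4}$, and $D_xy$ scales lengths by exactly $|x|^{-2}$. Since $w$ is $C^1$ on $\mathbb{R}^2\setminus\overline{B}_1$, the functions $\tilde p=p\circ y$ and $\tilde q=q\circ y$ are $C^1$ on $0<|x|<1$. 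We will check \eqref{FSthm4} pointwise by the chain rule.

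First, for the gradients, $\nabla\tilde p(x)=(D_xy)^{\mathsf T}\nabla p(y)=|x|^{-2}R(x)\nabla p(y)$, and similarly for $\tilde q$. Because $R$ is orthogonal,
\[
\tilde p_1^2+\tilde p_2^2+\tilde q_1^2+\tilde q_2^2=|x|^{-4}\bigl(p_1^2+p_2^2+q_1^2+q_2^2\bigr)(y).
\]
Second, for the Jacobian determinant, $\det D_x(\tilde p,\tilde q)=\det D_y(p,q)\cdot\det D_xy=-|x|^{-4}(p_1q_2-p_2q_1)(y)$. This sign reversal is exactly why the components are swapped: for $\tilde w=(\tilde q,\tilde p)$,
\[
\tilde q_1\tilde p_2-\tilde q_2\tilde p_1=|x|^{-4}(p_1q_2-p_2q_1)(y).
\]

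Substituting into \eqref{ceeq} at the point $y$, which lies in $\mathbb{R}^2\setminus\overline B_1$, and multiplying by $|x|^{-4}$ gives
\[
|D\tilde w|^2\le 2K(\tilde q_1\tilde p_2-\tilde q_2\tilde p_1)+K'|x|^{-4}|y|^{-2\alpha_0}.
\]
Since $|y|=|x|^{-1}$, the last term equals $K'|x|^{2\alpha_0-4}=K'|x|^{-2(2-\alpha_0)}$. This is \eqref{FSthm4} with $\lambda_0=2-\alpha_0$, and the condition $\alpha_0>1$ ensures $\lambda_0<1$, as Theorem \ref{fsthm} requires. The constants $K$ and $K'$ are unchanged.

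No step is a genuine obstacle. The only point needing care is the orientation: one must track the sign of $\det D_xy$ and see that swapping to $(\tilde q,\tilde p)$ restores a nonnegative Jacobian term, so that the inequality keeps the form \eqref{FSthm4}.
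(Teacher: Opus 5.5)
Your proposal is correct and follows essentially the same route as the paper. Both arguments show, via the chain rule, that the Kelvin transform multiplies $p_1^2+p_2^2+q_1^2+q_2^2$ by $|x|^{-4}$ and reverses the sign of the Jacobian; both then swap to $(\tilde q,\tilde p)$ and use $|y|^{-2\alpha_0}=|x|^{2\alpha_0}$ to obtain $\lambda_0=2-\alpha_0$. The only difference is presentation: the paper writes out the partial derivatives explicitly, while you package them as $D_xy=|x|^{-2}R(x)$ with $R(x)$ an orthogonal reflection.
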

\begin{proof}
	By a direcet calcaulation, we get
	\begin{align*}
		\tilde{p}_1&=\left(|x|^{-2}-2x_1^2|x|^{-4}\right)p_1+\left(-2x_1x_2|x|^{-4}\right)p_2, \\
		\tilde{p}_{2}&=\left(-2x_{1}x_{2}|x|^{-4}\right)p_{1}+\left(|x|^{-2}-2x_{2}^{2}|x|^{-4}\right)p_{2}, \\
		\tilde{q}_1&=\left(|x|^{-2}-2x_1^2|x|^{-4}\right)q_1+\left(-2x_1x_2|x|^{-4}\right)q_2, 
	\end{align*}
	and
	\[ \tilde{q}_{2}=\left(-2x_{1}x_{2}|x|^{-4}\right)q_{1}+\left(|x|^{-2}-2x_{2}^{2}|x|^{-4}\right)q_{2}.\]
	It's easy to see that 
	\[\tilde{p}_{1}^{2}+\tilde{p}_{2}^{2}+\tilde{q}_{1}^{2}+\tilde{q}_{2}^{2}=|x|^{-4}\left(p_{1}^{2}+p_{2}^{2}+q_{1}^{2}+q_{2}^{2}\right),\]
	and
	\[\tilde{p}_1\tilde{q}_2-\tilde{p}_2\tilde{q}_1=-|x|^{-4}\left(p_1q_2-p_2q_1\right).\]
	
	Since $w=(p,q)$ is extended exterior $(K,K^{\prime},\alpha_0)$  quasiconformal over $\mathbb{R}^2\setminus\overline{B}_1(0)$, we deduce by Definition \ref{eeq} that $p$ and $q$ satisfy \eqref{ceeq} in $\mathbb{R}^2\setminus \overline{B}_1(0)$ for some $K\geq1$. Therefore, we obtain that in $B_1(0)\setminus\{0\}$,
	\begin{equation}\label{kqc}
		\begin{aligned}
			\tilde{p}_1^2+\tilde{p}_2^2+\tilde{q}_1^2+\tilde{q}_2^2&\leq |x|^{-4}\left(2K(p_1q_2-p_2q_1)+K^{\prime}|x|^{2\alpha_0}\right)\\
			&=2K(\tilde{p}_2\tilde{q}_1-\tilde{p}_1\tilde{q}_2)+K^{\prime}|x|^{-2\left(2-\alpha_0\right)}
		\end{aligned}
	\end{equation}
	This completes the proof.
\end{proof}

\noindent\textbf{Proof Theorem \ref{abw}.} Without loss of generality, suppose  that $B_1(0) \subset \Omega$. Let $\tilde{p}$ and $\tilde{q}$ be the Kelvin transform of $p$ and $q$ respectively . Let $\hat{\Omega} = \left\{ x/|x|^2 \middle| x \in \mathbb{R}^2 \setminus \overline{\Omega} \right\}$ and for any $\Omega^{\prime} \supset \Omega$, $\tilde{\Omega} = \left\{ x/|x|^2 \middle| x \in \mathbb{R}^2 \setminus \overline{\Omega^{\prime}} \right\}$. Then by Lemma \ref{klem}, $\tilde{w} = (\tilde{q}, \tilde{p})$ is $(K,K^{\prime})$ quasiconformal in $\hat{\Omega} \setminus \{0\}$ with $K > 1$. Since $|w| \leq M$ implies $|\tilde{w}| \leq M$. Since $\alpha_0>1$, then $\lambda_0<1$, applying Theorem \ref{fsthm} and Remark \ref{wbd} to $\tilde{w}$, we know that
 $\tilde{w}(x)$ has a limit $\tilde{w}(0)$ at $0$, then for all $x \in \tilde{\Omega}$,
$$
|\tilde{w}(x) - \tilde{w}(0)| \leq C|x|^\alpha,  $$
with  $\alpha=\min \left\{K-(K^2-1)^{\frac12}, \alpha_0-1\right\}$ and $C$ depends only on $K$, $K^{\prime}$, $d$ and $M$.  Transforming back to exterior domain, we have 
$$
|w(x) - w(\infty)| \leq C|x|^{-\alpha}, \, x \in \mathbb{R}^2 \setminus \overline{\Omega'}.$$
The theorem is therefore proved. \qed


Considering the linear elliptic equation
\begin{equation}\label{leq}
	L(u)=a_{11}(x)u_{11}(x)+2a_{12}(x)u_{12}(x)+a_{22}(x)u_{22}(x)=f(x),
\end{equation}
where $L$ is uniformly elliptic, that is, there exist constants $0<\lambda\leq\Lambda$ such that
$$\lambda(\xi_1^2+\xi_2^2)\leq a_{11}\xi_1^2+2a_{12}\xi_1\xi_2+a_{22}\xi_2^2\leq\Lambda(\xi_1^2+\xi_2^2),\quad \forall\xi=(\xi_1,\xi_2)\in\mathbb{R}^2.$$

For uniformly elliptic equation \eqref{leq} in a bounded domain $\Omega$ of $\mathbb{R}^2$, it follows from the interior H\"{o}lder estimate of $(K,K^{\prime})$  quasiconformal mappings that its bounded solutions with bounded $f$ have interior $C^{1,\alpha}$ estimate \cite[Theorem 12.4]{GT}.

For uniformly elliptic equation \eqref{leq} over exterior domain in $\mathbb{R}^2$, we can establish the gradient H\"{o}lder estimate and the gradient asymptotic behavior of solutions at infnity by the virtue of Theorem \ref{abw}.

\begin{thm}\label{cadu}
	Let $\Omega$ be a bounded domain of $\mathbb{R}^2$ and  $f(x) \leq C|x|^{-\alpha_0}$ for some $\alpha_0>1$, $ x\in \mathbb{R}^2\setminus \overline{\Omega}$. Suppose $v \in C^2(\mathbb{R}^2\setminus\overline{\Omega})$ is a solution of equation \eqref{leq} in 
	$\mathbb{R}^2\setminus\overline{\Omega}$ and $|Dv|\leq M$. Then for any $\Omega^{\prime}\supset\supset\Omega $ with $d=$dist$(\Omega,\partial\Omega^{\prime})$,
 $Dv(x)$ has a limit $Dv(\infty)$ at infinity with
	\begin{equation}\label{abdu}
		|Dv(x)-Dv(\infty)|\leq C|x|^{-\alpha},\quad x\in\mathbb{R}^2\setminus\overline{\Omega^\prime},
	\end{equation}
	where $\alpha$ depends only on $\lambda$, $\Lambda$ and $\alpha_0$, $C$ depends only on $\lambda$, $\Lambda$, $d$, $\alpha_0$ and $M$.
\end{thm}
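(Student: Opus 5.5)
The plan is to mimic the classical proof of \cite[Theorem 12.4]{GT}: the gradient map $w=(p,q)$, with $p=v_1$ and $q=v_2$ (suitably normalized), will be shown to be extended exterior $(K,K^{\prime},\alpha_0)$--quasiconformal. Theorem \ref{abw} then gives both the limit $Dv(\infty)$ and the decay \eqref{abdu}.

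First, I divide \eqref{leq} by $a_{11}\geq\lambda$ and write the equation as
\[
v_{11}+2b\,v_{12}+c\,v_{22}=g,
\]
with $b=a_{12}/a_{11}$, $c=a_{22}/a_{11}$ and $g=f/a_{11}$. Then $|g|\leq C|x|^{-\alpha_0}$, and $b,c$ are bounded with $c-b^2\geq \lambda^2/\Lambda^2$ by uniform ellipticity. Set $p=v_1$ and $q=v_2$, so that $p_2=q_1=v_{12}$. Following \cite{GT}, I use the identity $p_1=-2bq_1-cq_2+g$. Substituting gives
\[
p_1^2+p_2^2+q_1^2+q_2^2 \quad\text{and}\quad p_1q_2-p_2q_1=-2b v_{12}v_{22}-c v_{22}^2-v_{12}^2+g v_{22},
\]
with the first expression also written out in terms of $v_{12},v_{22},g$. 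The quadratic parts, in $(v_{12},v_{22})$, are comparable: there is a $K=K(\lambda,\Lambda)>1$ such that
\[
(2bq_1+cq_2)^2+2q_1^2+q_2^2 \leq 2K\bigl(c\,q_2^2+2b\,q_1q_2+q_1^2\bigr)-\epsilon\,(q_1^2+q_2^2)
\]
for some $\epsilon>0$. One may need to replace $w$ by $(q,-p)$, or rescale one component, to get the sign of the Jacobian right. This is exactly where the Kelvin transform in Lemma \ref{klem} swaps the components, so I will check the orientation carefully.

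Next I absorb the $g$-terms. The cross terms $2g(2bq_1+cq_2)$ and $2Kg v_{22}$ are bounded by $\epsilon(q_1^2+q_2^2)+C\epsilon^{-1}g^2$ using Young's inequality. This yields
\[
p_1^2+p_2^2+q_1^2+q_2^2\leq 2K(p_1q_2-p_2q_1)+K^{\prime}|x|^{-2\alpha_0},
\]
with $K^{\prime}=C(\lambda,\Lambda)C^2$, which is precisely \eqref{ceeq}. The regularity $p,q\in C^1$ follows from $v\in C^2$, and $|w|\leq M$ holds by hypothesis.

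Finally I apply Theorem \ref{abw} on $\mathbb{R}^2\setminus\overline{\Omega}$. It gives the limit $w(\infty)=Dv(\infty)$ together with \eqref{abdu}, where
\[
\alpha=\min\{K-(K^2-1)^{1/2},\alpha_0-1\},
\]
decreased by an arbitrarily small amount in the equality case. Since $K$ depends only on $\lambda,\Lambda$, this $\alpha$ depends only on $\lambda,\Lambda,\alpha_0$, and $C$ depends on $\lambda,\Lambda,d,\alpha_0,M$ (and on the constant in the bound for $f$). The main obstacle is the algebraic step of choosing $K$ with a strict margin $\epsilon$, so that the inhomogeneous terms can be absorbed while $K>1$ is retained. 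I would take $K$ slightly larger than the GT constant for the homogeneous case; this strict margin is what makes the absorption possible.
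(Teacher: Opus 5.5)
Your overall strategy is the paper's: show that the gradient map is extended exterior $(K,K',\alpha_0)$--quasiconformal and invoke Theorem \ref{abw}. The only difference is how the distortion inequality is verified.

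The paper follows \cite{GT} verbatim. With $\lambda=1$ and $J=p_2q_1-p_1q_2$, it applies ellipticity to $Dp$ and $Dq$ separately,
\[
|Dp|^2\le a_{ij}p_ip_j=a_{22}J+fp_1,\qquad |Dq|^2\le a_{11}J+fq_2 .
\]
It then sums, uses $2\le a_{11}+a_{22}\le 2\Lambda$, and absorbs $f(p_1+q_2)$ by Young's inequality. This gives the explicit constants $K=2\Lambda$, $K'=8C^2\Lambda^2$ for the map $(q,p)$.

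Your route eliminates $p_1=v_{11}$ and compares two quadratic forms in $(v_{12},v_{22})$ with a strict margin. It gives only a non-explicit $K(\lambda,\Lambda)$, but it does not depend on the sign of the Jacobian. The paper's step $(a_{11}+a_{22})J\le 2\Lambda J$ tacitly requires $J=-\det D^2v\ge 0$, which can fail once $f\not\equiv 0$. That case needs a separate (easy) remark in the paper's route: if $J<0$, the summed inequality forces $|Dp|^2+|Dq|^2\le 2f^2$, which is absorbed into $K'$. Your margin-plus-Young absorption handles both signs at once.

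One correction on orientation. The map $(q,-p)$ is a rotation of $(p,q)$ and has the same Jacobian,
\[
p_1q_2-p_2q_1=-(v_{12}^2+2b\,v_{12}v_{22}+c\,v_{22}^2)+g\,v_{22},
\]
so it does not help. The right choice is $(q,p)$, as in the paper, or $(p,-q)$; either has Jacobian $v_{12}^2+2b\,v_{12}v_{22}+c\,v_{22}^2-g\,v_{22}$. Your final displayed inequality should be stated for that map.

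You are also right that $C$ depends on the constant in the bound for $f$, which the statement omits.
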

\begin{rem}
	The results in Theorem \ref{cadu} are also valid for $v\in W^{2,2}(\mathbb{R}^2\setminus\overline{\Omega}).$
\end{rem}

\noindent\textbf{Proof of Theorem \ref{cadu}.} Assume without loss of generality that $\lambda=1$, Let $p=v_1,q=v_2.$ By
equations \eqref{leq}, we have 
$$p_1^2+p_2^2\leq a_{11}p_1^2+2a_{12}p_1p_2+a_{22}p_2^2=a_{22}J+fp_1,\quad  x\in\mathbb{R}^2\setminus\overline{\Omega}$$
where $J=p_2q_1-p_1q_2$ and
$$q_1^2+q_2^2\leq a_{11}J+fq_2,\quad x\in\mathbb{R}^2\setminus\overline{\Omega}.$$
Noticing that $2\leq a_{11}+a_{22}\leq2\Lambda$, we get
\begin{align*}
	p_1^2+p_2^2+q_1^2+q_2^2&\leq(a_{11}+a_{22})J+f(p_1+q_2)\\
	&\leq2\Lambda J+f(p_1+q_2)\\
	&\leq 2\Lambda J+\Lambda C|x|^{-\alpha_0}(|p_1|+|q_2|)\\
	&\leq2\Lambda J+\frac{1}{2}\left(|p_1|^2+|q_2|^2\right)+4C^2\Lambda^2|x|^{-2\alpha_0}, \
	x\in\mathbb{R}^2\setminus\overline{\Omega},
\end{align*}
 then we get
\[p_1^2+p_2^2+q_1^2+q_2^2\leq 4\Lambda J+8C^2\Lambda^2|x|^{-2\alpha_0},\]
which implies that $w=(q,p)$ is extended exterior $(K,K^{\prime},\alpha_0)$ quasiconformal over $\mathbb{R}^2\setminus\overline{\Omega}$ with $K=2\Lambda$ and $K^{\prime}=8C^2\Lambda^2$.

Since $|Dv|\leq M$ in $\mathbb{R}^2\setminus\overline{\Omega}$, Theorem \ref{abw}  asserts that for any $\Omega^{\prime}\supset\supset\Omega$, there exist $\alpha$ and $C$ such that
 $Dv(x)$ tends to a limit $Dv(\infty)=(p(\infty),q(\infty))$ at infınity with
$$|Dv(x)-Dv(\infty)|\leq C|x|^{-\alpha}, \ x\in\mathbb{R}^2\setminus\overline{\Omega^{\prime}}.$$
The theorem is therefore proved. \qed

Based on the above preparation work, we can find the limit $A$ of the Hessian $D^2u$ of \eqref{eq} at infinity and estimate the decay rate of $|D^2u-A|$ at infinity.

\begin{thm}\label{wdecay}
  Let $u$ be a smooth solution of 
  \begin{equation}\label{thm2.10eq}
  	G(D^2u)=f(x),\quad  x\in \mathbb{R}^2\setminus\overline{\Omega},
  \end{equation}
 where  $G\in C^2$  is a fully nonlinear uniformly elliptic operator with ellipticity constants $\lambda$ and $\Lambda$,    $\Omega$ is a bounded domain in $\mathbb{R}^2$.
   Suppose $\|D^2u\|_{L^{\infty}(\mathbb{R}^2\setminus\overline{\Omega})}\leq M$ and $f$ satisfies \eqref{abf} with $\beta>0$, then  there exists a  matrix $ A$ such that $G(A)=0$ and
	\[D^2u=A+O(|x|^{-\varepsilon}) \ \text{ as } |x| \rightarrow \infty,\]
	which implies
	\begin{equation}
		\left|u(x) - \frac{1}{2}x^\mathsf{T} Ax\right| \leq C|x|^{2-\varepsilon}, \ |x|\geq R_0,
	\end{equation}
	where $\varepsilon \in (0, 1)$ and  $C$ are positive constants depending only on $\lambda$, $\Lambda$, $\beta$ and $M$, $R_0$ is a large constant.
\end{thm}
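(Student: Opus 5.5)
Differentiating \eqref{thm2.10eq} in a direction $e$ makes each derivative of $u$ satisfy a linear equation of type \eqref{leq}. This lets us apply Theorem \ref{cadu} to $v=u_e$ for $e=e_1,e_2$, which gives the limit of $Du_e$, i.e. of the rows of $D^2u$, together with a decay rate. Concretely, for $k=1,2$ set $v=D_ku$. Since $G\in C^2$ and $u$ is smooth,
\[
a_{ij}(x)D_{ij}v = D_kf(x),\qquad a_{ij}(x):=\frac{\partial G}{\partial M_{ij}}(D^2u(x)),
\]
and this holds in $\mathbb{R}^2\setminus\overline{\Omega}$. By uniform ellipticity of $G$, the coefficients satisfy $\lambda|\xi|^2\le a_{ij}\xi_i\xi_j\le\Lambda|\xi|^2$, so the equation is of the form \eqref{leq}. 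By \eqref{abf} with $m\ge 1$, the right-hand side obeys $|D_kf|\le C|x|^{-\beta-1}$. Hence the decay exponent required in Theorem \ref{cadu} is $\alpha_0=\beta+1>1$, and this holds because $\beta>0$. Finally $|Dv|\le|D^2u|\le M$. Theorem \ref{cadu} therefore applies on $\mathbb{R}^2\setminus\overline{\Omega'}$ for any fixed $\Omega'\supset\supset\Omega$. It yields a limit $Dv(\infty)$ with $|DD_ku(x)-DD_ku(\infty)|\le C|x|^{-\alpha}$, where $\alpha$ depends only on $\lambda,\Lambda,\beta$.

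Let $A$ be the matrix whose $k$-th row is $DD_ku(\infty)$. It is symmetric because $D^2u$ is symmetric pointwise, and we have $|D^2u(x)-A|\le C|x|^{-\alpha}$. Choose $\varepsilon=\min\{\alpha,1/2\}\in(0,1)$; shrinking the exponent if necessary keeps it in $(0,1)$, which we need later. Then $D^2u=A+O(|x|^{-\varepsilon})$. To see $G(A)=0$, let $|x|\to\infty$ in $G(D^2u(x))=f(x)$. Continuity of $G$ and $f(x)=O(|x|^{-\beta})\to0$ give $G(A)=0$.

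For the estimate on $u$, put $w(x)=u(x)-\frac12x^\mathsf{T}Ax$, so that $|D^2w(x)|\le C|x|^{-\varepsilon}$ for $|x|\ge R_0$. Integrate along rays from the sphere $|x|=R_0$. For $x=r\sigma$ with $|\sigma|=1$ we get
\[
|Dw(r\sigma)|\le |Dw(R_0\sigma)|+\int_{R_0}^r Cs^{-\varepsilon}ds\le C+Cr^{1-\varepsilon}.
\]
A second integration gives $|w(x)|\le C+C|x|+C|x|^{2-\varepsilon}\le C|x|^{2-\varepsilon}$ for $|x|\ge R_0$; this is where $\varepsilon<1$ is used. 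The values of $w$ and $Dw$ on $\partial B_{R_0}$ are bounded by smoothness, so they are absorbed into $C$. The exterior of $B_{R_0}$ is path-connected, so integrating along rays is legitimate.

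The main obstacle is checking that the hypotheses of Theorem \ref{cadu} are met uniformly. First, the coefficients $a_{ij}$ are merely bounded measurable in the sense needed, but Theorem \ref{cadu} only uses ellipticity bounds, so this is enough. Second, the constants must come out independent of $u$ except through $M$. This works because the $K$ and $K'$ produced in the proof of Theorem \ref{cadu} depend only on $\Lambda/\lambda$ and the decay constant of $f$. The last point is the dependence of $C$ on $\Omega'$, where we take $d$ fixed, and on $R_0$. These feed into the constants and are consistent with the statement, which allows $R_0$ to be large.
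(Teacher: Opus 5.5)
Your proposal is correct and follows the paper's proof essentially verbatim. Like the paper, you differentiate once in $x_k$, apply Theorem~\ref{cadu} to $v=D_ku$ with $\alpha_0=\beta+1>1$ and $|Dv|\le M$, assemble $A$ from the limits of the rows of $D^2u$, and integrate twice; your added details (symmetry of $A$, $G(A)=0$ by continuity, the ray integration using $\varepsilon<1$) fill in what the paper leaves implicit. One small remark: absorbing the values of $w$ and $Dw$ on $\partial B_{R_0}$ into $C$ makes $C$ depend on $u$ itself, so to get the stated dependence of $C$ you should instead enlarge $R_0$ (depending on $u$) until the terms $|w(R_0\sigma)|+|Dw(R_0\sigma)|\,|x|$ are dominated by $|x|^{2-\varepsilon}$.
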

\begin{proof}
	Taking derivative with respect to $x_k$, $k=1,2$ for
	on both sides of  \eqref{thm2.10eq} to obtain
	\begin{equation}\label{dleq}
		a_{ij}(x)v_{ij}(x)=f_k,\ x\in\mathbb{R}^2\setminus\overline {\Omega},
	\end{equation}
	where $a_{ij}(x)=G_{M_{ij}}\left(D^{2}u(x)\right)$ and $v(x)=u_k(x)$.
	
Since $\|D^2u\|_{L^{\infty}(\mathbb{R}^2\setminus\overline{\Omega})}\leq M$, then $|Dv|<M$. We may assume  $|f_k|\leq C|x|^{-1-\beta}$ in $\mathbb{R}^2\setminus\overline {\Omega}$. Applying Theorem \ref{cadu} to \eqref{dleq}
	in $\mathbb{R}^2\setminus\overline{\Omega}$. 
	Therefore,
	we have that $Dv(x)$ tends to a limit $Dv(\infty)$ at infınity and fix a domain $\Omega^\prime\supset\supset\Omega$, then
	$$|Dv(x)-Dv(\infty)|\leq C|x|^{-\varepsilon},\ x\in\mathbb{R}^2\setminus\overline{\Omega'}.$$
	Then by the arbitrarity of $k$, we conclude that there exists a  matrix $A$  such
	that $D^2u(x)\to A$ as $|x|\to\infty$ and
	$$D^2u=A+O(|x|^{-\varepsilon}), \ \text{as} \ |x|\to\infty.$$
	
	It follows that
	$$\left|u(x)-\frac12x^\mathsf{T} Ax\right|\leq C|x|^{2-\varepsilon}\  \ |x|\geq R_0,$$
	where $\varepsilon\in(0,1)$ and $C>0$ depending only on $\lambda$, $\Lambda$, $\beta$ and $M$.
\end{proof}

\begin{rem}
	Note that in the proof of the aforementioned theorem, we only take the derivative of the equation once,  only need  the boundedness of $D^2u$, without invoking the concavity of the operator $G$. This is clearly different from \cite[Theorem 2.1]{LLY2020},
	 it's  an essential assumption that $G$ is either convex, or
	concave, or the level set is convex.
\end{rem}

In the following, we present a Theorem of  the limit of $D^2u$ that holds for  $n \geq  3$.
\begin{lem}\label{gth}
	Let $n \geq  3$.	Under the assumption of Theorem \ref{wdecay}  and suppose $\theta+f(x)\geq (n-2)\pi/2+\delta$.
If $f$ satisfies \eqref{abf} with $\beta>0$  sufficiently lagre. 
 Then there exists a  matrix $ A\in \mathcal{A}$ such that
	\[D^2u(x)\to A\quad \text{as}~~|x|\to\infty.\]
\end{lem}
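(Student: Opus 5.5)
We follow the strategy of \cite[Theorem 2.1]{LLY2020}, now with a decaying right-hand side. We work with $u$ (after the Lewy rotation of Section~\ref{sec2.2} if needed), so that $\|D^2u\|_{L^\infty}\le M$. Because $\theta+f\ge (n-2)\pi/2+\delta$, the operator $F$ is concave in the level-set sense on the relevant range, and we may use its concave extension from \cite{CPW2017,CW2019}. For each large $R$ we rescale:
\[
u_R(y)=R^{-2}u(Ry),\qquad 1\le |y|\le 4,
\]
so that $F(D^2u_R(y))=\theta+f(Ry)$ with $|f(Ry)|\le CR^{-\beta}$ and $|D^k_y f(Ry)|\le CR^{-\beta}$. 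Concavity lets us apply Evans--Krylov together with Schauder theory on the annulus. This gives $\|u_R\|_{C^{2,\alpha}(1\le|y|\le 3)}\le C$ uniformly in $R$ (after subtracting affine parts), and hence the scaled oscillation decay $|D^3u(x)|\le C|x|^{-1}$.

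Next we produce a candidate limit. Take a sequence $R_j\to\infty$. Along a subsequence, $D^2u_{R_j}$ converges in $C^{\alpha'}_{loc}(\mathbb{R}^n\setminus\{0\})$, and $u_{R_j}-(\text{affine})$ converges to a function $U$ with $D^2U$ bounded. The limit satisfies $F(D^2U)=\theta$ on $\mathbb{R}^n\setminus\{0\}$ in the viscosity sense, since $f(R_jy)\to0$. Moreover $U$ is homogeneous of degree $2$ by a standard monotonicity/blow-down argument, or alternatively one obtains it as an entire solution via removability of the point singularity: $D^2U$ is bounded, so $U\in C^{1,1}$ near $0$, and for a concave uniformly elliptic equation an isolated point with bounded Hessian is removable when $n\ge3$. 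Yuan's Bernstein theorem \cite{Y2002,Yuan2006} for supercritical phase then shows $U$ is quadratic, so $D^2U\equiv A\in\mathcal{A}$. It remains to show that $A$ is independent of the subsequence and that $D^2u\to A$ along all sequences, not just a subsequence.

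That last point is the main obstacle. To handle it, we would differentiate the equation in a direction $e$: the pure second derivative $u_{ee}$ satisfies $a_{ij}D_{ij}u_{ee}\ge D_{ee}f$, since concavity of $F$ yields a subsolution inequality up to an error of size $C|x|^{-\beta-2}$. Similarly, with linearized coefficients $a_{ij}=F_{M_{ij}}(D^2u)$ uniformly elliptic, $\partial_k u$ solves $a_{ij}D_{ij}(u_k)=f_k=O(|x|^{-\beta-1})$.

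We plan to combine the weak Harnack inequality on dyadic annuli with a barrier comparison. This shows $\operatorname{osc}_{|x|\in(R,2R)}D^2u\to0$ and that $\sup_{|x|\ge R}u_{ee}$ is eventually nonincreasing up to the accumulated error $\sum_k (2^kR)^{-\beta}\cdot(2^kR)^2$. This error sum is summable precisely because $\beta$ is sufficiently large (certainly $\beta>2$ suffices here; largeness is also used to make the error absorbable in the barrier $|x|^{2-n}$-type comparisons). Consequently $\lim_{|x|\to\infty}u_{ee}$ exists for every $e$, and polarization gives a unique limit $A$ of $D^2u$. Continuity of $F$ then gives $F(A)=\theta$, i.e.\ $A\in\mathcal{A}$.
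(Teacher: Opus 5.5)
Your first two steps are sound. The rescaled Evans--Krylov bound, the blow-down limit and the removability/Bernstein argument together show that every subsequential blow-down has constant Hessian, hence $\operatorname{osc}_{R\le|x|\le 2R}D^2u\to0$. (The claimed degree-$2$ homogeneity via a ``monotonicity'' argument is unsupported, but you do not need it.) This is a legitimate substitute for the paper's H\"{o}lder estimate plus weak Harnack step.

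The gap is in the step you yourself call the main obstacle: it is asserted rather than proved. That $\sup_{|x|\ge R}u_{ee}$ is nonincreasing in $R$ holds trivially (a sup over shrinking sets) and carries no information. Combined with oscillation decay on dyadic annuli, it still does not imply convergence: a bounded function whose profile drifts like $\sin(\log\log|x|)$ has both properties. So ``consequently $\lim_{|x|\to\infty}u_{ee}$ exists'' is a non sequitur. Excluding exactly this slow drift requires feeding the subsolution inequality $Lu_{ee}\ge f_{ee}$ into a comparison argument on annuli of uncontrolled aspect ratio, and your proposal never supplies one.

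The missing idea is the paper's Claim. Suppose $\overline\omega-\underline\omega=5d>0$ for $\omega=u_{ee}$, and pick $\underline x_k\to\infty$ with $\omega(\underline x_k)\le\underline\omega+\varepsilon$. Assume $\omega<\overline\omega-\varepsilon$ on every sphere $\partial B_{|\underline x_k|}$. Then compare $\omega$ on each annulus $B_{|\underline x_{k+1}|}\setminus B_{|\underline x_k|}$ with $\overline\omega-\varepsilon+H$, where $H=C_\star(\hat R^{-\beta}-|x|^{-\beta})$ satisfies $LH\le-|f_{ee}|$ and $0\le H<\varepsilon/2$. This forces $\omega\le\overline\omega-\varepsilon/2$ for all large $|x|$, contradicting the definition of $\overline\omega$. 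Hence some sphere through a low point also contains a point where $\omega\ge\overline\omega-\varepsilon$. Your oscillation decay on annuli (or the paper's weak Harnack) then gives $d\lesssim\varepsilon$, and letting $\varepsilon\to0$ gives the contradiction.

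This is also where ``$\beta$ sufficiently large'' genuinely enters, and you misplace it. Since $L=F_{ij}D_{ij}$ has ellipticity ratio of order $1+M^2$, $|x|^{-\beta}$ gives a supersolution only when $\beta+2\ge(1+M^2)(n+\delta_0)$. An $|x|^{2-n}$-type barrier is not a supersolution for such $L$. Some explicit barrier is needed because plain ABP on an annulus with radii $\rho_1<\rho_2$ only yields an error of order $\rho_2\rho_1^{-\beta-1}$, which is unbounded in the aspect ratio. Summing per-dyadic-annulus errors does not substitute for it. (With the correct scaling $|f_{ee}|\lesssim|x|^{-\beta-2}$, that sum is $\sum_k(2^kR)^{-\beta}$, which would converge for any $\beta>0$ anyway.)
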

\begin{proof}
	This proof mainly follows the idea of \cite[Theorem 4]{J2020}, with the key difference being the choice of the barrier function.
	
	Let $\omega(x) = u_{ee}(x)$, where $e \in \partial B_1$. The goal is to show that $\omega$ converges at infinity.

	Denote
	$$\overline{\omega}=\limsup_{|x|\to\infty}\omega(x),\quad\underline{\omega}=\liminf_{|x|\to\infty}\omega(x).$$
	It's enough to prove that $\overline{\omega}=\underline{\omega}.$
	
	Now we argue this by contradiction. If it is wrong, we have $\overline{\omega}-\underline{\omega}=:5d>0.$ Clearly, for any
	$0<\varepsilon<d$, there exists some large constant $R=R(\varepsilon)>$l such that
	$$\underline{\omega}-\varepsilon\leq \omega(x)\leq\overline{\omega}+\varepsilon $$
	$\operatorname{for}$ all $x\in B_{R/2}^{\complement}$, and also there exists a sequence of $\underline{x}_k$ in $B_{R/2}^{\complement}$, \( |\underline{x}_k| \leq |\underline{x}_{k+1}| \) and tending to $\infty$ , such that
	$$\omega\left(\underline{x}_k\right)\leq\underline{\omega}+\varepsilon, $$
	for all $k\in\mathbb{Z}_+.$ 
	\begin{clm}
		There exists a point $\overline{x} $ on the sphere $\partial B_{|\underline{x}|}$ for at least one $\underline{x}\in\{\underline{x}_k\}$, such that
		$$\omega\left(\overline{x}\right)\geq\overline{\omega}-\varepsilon.$$
	\end{clm}
	\begin{proof}
		Otherwise, $\omega<\overline{\omega}-\varepsilon$ on the spheres $\partial B_{|\underline{x}_k|}$ for all $k\in\mathbb{Z}_+.$
		
By the concavity of  $F$,  for all $ k$ large,
		\[L\omega\geq f_{ee},\quad  x\in B_{|\underline{x}_k|}^{\complement},\]
where $L\phi=F_{ij}\phi_{ij}$.
	
		Consider the following  supersolution:
	\[H(x)=C_{\star}\left(\hat{R}^{-\beta}-|x|^{-\beta}\right),\quad |x|\geq \hat{R}\]
	where $C_{\star}$  and $\hat{R}\geq R$ are constants to be determined. A direct computation yields
	\[H_{ij}(x)=-C_{\star}(\beta+2)\beta|x|^{-\beta-4}x_ix_j+C_{\star}\beta|x|^{-\beta-2}\delta_{ij},\]
	and
	\begin{align*}
		LH&=C_{\star}\beta|x|^{-\beta-4}  \left[-(\beta+2)F_{ij}x_ix_j+|x|^2\sum_{i=1}^n F_{ii}\right]\\
		&\leq C_{\star}\beta|x|^{-\beta-2}  \left[-\frac{\beta+2}{1+M^2} +n\right]\\
		&\leq -C(n,\theta,\beta,M,C_{\star}) |x|^{-\beta-2}.
	\end{align*}
	The last second inequality holds since for sufficiently large \( \beta \) such that \( \beta\geq \left(1+M^2\right)(n+\delta_0)-2 \) for some constant \( \delta_0 \). Therefore, by choosing \( C_{\star} \) sufficiently large, we can have \( LB(x) \leq -|f_{ee}(x)| \) for sufficiently large \( \tilde{R} \geq \hat{R} \) and \( |x| \geq \tilde{R} \).
	
	Furthermore, after fixing \( C_{\star} \), we may choose \( \hat{R} \) sufficiently large such that \( C_{\star}\hat{R}^{-\beta} < \frac{\varepsilon}{2} \). Indeed, by assumption, \( \omega < \overline{\omega} - \varepsilon \) on each \( \partial B_{|\underline{x}_k|}, k \in \mathbb{Z}_{+} \). Clearly, for all $k\geq k_0$ such that $|\underline{x}_{k}|\geq \hat{R}$, we have 
	\[
	\omega < \overline{\omega} - \varepsilon + H \quad \text{on} \ \partial B_{|\underline{x}_{k}|}.
	\]
	
	By the comparison principle, for all \( k \geq k_1\geq k_0 \) such that \( |\underline{x}_{k_1}| \geq \tilde{R}\geq \hat{R} \geq \left( 2C_{\star}/\varepsilon \right)^{\frac{1}{\beta}} \), we have
	\[
	\omega<\overline{\omega} - \epsilon + C_{\star}\hat{R}^{-\beta} \leq \overline{\omega} - \frac{\epsilon}{2} \quad \text{in } B_{|\underline{x}_{k+1}|} \setminus B_{|\underline{x}_{k}|},
	\]
	i.e., \( \omega(x) \leq \overline{\omega} - \frac{\epsilon}{2} \) for \( x \in B_{|\underline{x}_{k_1}|}^{\complement} \), which leads to a contradiction.
		\end{proof}
			Applying  the Evans--Krylov estimate to $\omega=u_{ee}$ in $B_{|\underline{x}|/2}(\underline{x})$ (cf. \cite{GT}) with the decay condition \eqref{abf}, we have
		\begin{align*}
			\mathrm{osc}_{B_{\gamma|\underline{x}|(\underline{x})}}   u_{ee} & \leq C\left(\frac{2\gamma\left|\underline{x}\right|}{\left|\underline{x}\right|}\right)^\alpha \left\{\mathrm{osc}_{B_{\gamma|\underline{x}|(\underline{x})}}D^2u+\left|\underline{x}\right|\left|Df\right|_{0,B_{\left|\underline{x}\right|/2}\left(\underline{x}\right)}+\left|\underline{x}\right|^2\left|D^2f\right|_{0,B_{\left|\underline{x}\right|/2}\left(\underline{x}\right)}\right\} \\
			& \leq2C\gamma^\alpha(K+1) \\
			& \leq d,
		\end{align*}
		where	$\alpha=\alpha(n,\theta),\gamma=\gamma(n,\theta,K,d)=:\min\{1/10,(d/(2C(K+1)))^{1/\alpha}\}$. Thus, \[\omega(x)\leq\underline{\omega}+\varepsilon+d\leq\overline{\omega}-3d\quad\mathrm{for~}x\in B_{\gamma|\underline{x}|}\left(\underline{x}\right),\]
		which yields
		$$\overline{\omega} - \omega(x) \geq 3d \quad \text{for } x \in B_{\gamma|\underline{x}|}(\underline{x}).$$
		Let $v(x) = \overline{\omega} + \varepsilon - \omega(x)$. Then
		$$
		L v \leq -f_{ee} \quad \text{in } |x| \geq R.$$
		Applying the weak Harnack inequality to $v$ in $B_{(1+3\gamma)|\underline{x}|} \setminus \overline{B}_{(1-3\gamma)|\underline{x}|}$ (cf. \cite{GT}), we obtain that
		\begin{align*}
			\left(\frac{1}{\left|B_{\gamma\left|\underline{x}\right|}\right|}\int_{B_{(1+\gamma)\left|\underline{x}\right|}\setminus\overline{B}_{(1-\gamma)\left|\underline{x}\right|}}v^\delta\right)^{1/\delta} & \leq C\left\{\inf_{B_{(1+\gamma)\left|\underline{x}\right|}\setminus\overline{B}_{(1-\gamma)\left|\underline{x}\right|}}v+\left|\underline{x}\right|\left\|f_{ee}\right\|_{L^n\left(B_{(1+3\gamma)\left|\underline{x}\right|}\setminus\overline{B}_{(1-3\gamma)\left|\underline{x}\right|}\right)}\right\} \\
			& \leq C\left\{2\varepsilon+\left|\underline{x}\right|^{2}\sup_{B_{(1+3\gamma)\left|\underline{x}\right|}\setminus\overline{B}_{(1-3\gamma)|\underline{x}|}}f_{ee}\right\} \\
			& \leq C\left(2\varepsilon+\left|\underline{x}\right|^{-\beta}\right) \\
			& \leq3C\varepsilon
		\end{align*}
		for $|\underline{x}|$ large, where $C$ is independent of $\varepsilon$, as $\beta>0$. Then $3d\leq 3C\varepsilon$. Letting $\varepsilon\rightarrow 0$, we get $d=0$, a contradiction.
	\end{proof}

\begin{rem}\label{betal}
	 We point out that $\beta$ in Lemma \ref{gth} is always graet than $n$ as $n\geq 4$. Indeed, $D^2u\to A$ as $|x|\to \infty$ and by \eqref{eq}, $\lambda_{\max}(A)>1$, then we have $M>1$ and  the inequality
	 \[\beta>\left(1+M^2\right)(n+\delta_0)-2>n\left(1+M^2\right)-2\geq 2n-2>  n\]
	 holds when $n\geq 4$. 
\end{rem}	

\subsection{Some useful lemmas}\label{sec2.4}
In this subsection, we collect some  preliminary lemmas which
will be mainly used in Section \ref{sec4}.

To study the  boundary behavior of the exterior Dirichlet problem, we need the following Lemma.
\begin{lem}\label{bdsublem}
	Let \( \Omega \) be a bounded, strictly convex domain in \( \mathbb{R}^n \), \( n \geq 3 \), \( \partial \Omega \in C^{1,1} \). Let \( \varphi \) be semi-convex with respect to $\partial \Omega$. Assume \( K > 0 \) and let \( A \) be a positive definite and symmetric matrix. There exists some constant \( C \), depending only on \( n \), \( \varphi \), \( K \), the upper and lower
	bound of \( A \),  and the \( C^{1,1} \) norm of \( \partial \Omega \), such that for every \( \xi \in \partial \Omega \), there exists \( \overline{x}(\xi) \in \mathbb{R}^n \) satisfying \( |\overline{x}(\xi)| \leq C \) and  
	\[
	Q_{\xi} < \varphi \quad \text{on } \partial \Omega \setminus \{\xi\},
	\]
	where  
	\[
	Q_{\xi}(x) = \varphi(\xi) + \frac{K}{2}\left[(x - \overline{x}(\xi))^T A(x - \overline{x}(\xi)) - (\xi - \overline{x}(\xi))^\mathsf{T} A(\xi - \overline{x}(\xi))\right]
	\]
	for \( x \in \mathbb{R}^n \).  Moreover, there exists $\overline{c}=\overline{c}(\Omega,K,A,\varphi)$ such that
	\begin{equation}\label{bdsubbd}
		Q_{\xi}(x) \leq \frac{K}{2}x^{\mathsf{T}}Ax+\overline{c}
	\end{equation}
	for every $\xi \in \partial \Omega$ and $x\in \mathbb{R}^n\setminus \Omega$.
\end{lem}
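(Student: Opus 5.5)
The plan is to prove the two assertions separately: first the touching-from-below property of $Q_\xi$ on $\partial\Omega$, then the upper bound \eqref{bdsubbd}. The first assertion is a local-to-global argument that uses semi-convexity of $\varphi$ near $\xi$ and strict convexity of $\Omega$ away from $\xi$. The second is an algebraic expansion of $Q_\xi$. I flag below a genuine difficulty with the linear term in that expansion.

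\emph{Step 1: local reduction.} Fix $\xi\in\partial\Omega$. After a rotation and translation, $\xi=0$, $\Omega\subset\{x_n>0\}$, and near $0$ the boundary is the graph $x_n=\rho(x')$. Here $\rho\in C^{1,1}$, $\rho(0)=0$, $D\rho(0)=0$, and by strict convexity and $C^{1,1}$ regularity $\mu|x'|^2\le \rho(x')\le \mu^{-1}|x'|^2$ with $\mu$ uniform in $\xi$. Semi-convexity of $\varphi$ with respect to $\partial\Omega$ gives, for some $p\in\mathbb R^n$ with $|p|\le C(\varphi,\Omega)$ and some $\kappa\ge0$,
\[
\varphi(x)\ge \varphi(\xi)+p\cdot(x-\xi)-\kappa|x-\xi|^2,\qquad x\in\partial\Omega,
\]
with constants uniform in $\xi$.

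\emph{Step 2: choice of $\overline{x}(\xi)$.} Write $y=x-\xi$. Then
\[
Q_\xi(x)-\varphi(\xi)=-K\,y^{\mathsf T}A(\overline{x}-\xi)+\tfrac K2 y^{\mathsf T}Ay .
\]
I choose $\overline{x}(\xi)=\xi+A^{-1}\bigl(-p/K+s\,\nu(\xi)\bigr)$, where $\nu$ is the inner unit normal and $s>0$ is large. Then
\[
Q_\xi-\varphi\le -s\,\nu\cdot y+\bigl(\tfrac K2\lambda_{\max}(A)+\kappa\bigr)|y|^2 .
\]
On $\partial\Omega$ we have $\nu\cdot y\ge c_\Omega |y|^2$ for all $y$. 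This holds uniformly on the whole boundary by compactness and strict convexity, and near $\xi$ it follows from $\rho\ge\mu|x'|^2$. Hence choosing $s>(\tfrac K2\lambda_{\max}(A)+\kappa)/c_\Omega$ gives $Q_\xi<\varphi$ on $\partial\Omega\setminus\{\xi\}$. The resulting bound $|\overline{x}(\xi)|\le C$ depends only on the listed quantities.

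\emph{Step 3: the bound \eqref{bdsubbd}.} Expanding,
\[
Q_\xi(x)=\tfrac K2 x^{\mathsf T}Ax-K\,\overline{x}^{\mathsf T}Ax+\varphi(\xi)+\tfrac K2\bigl(\overline{x}^{\mathsf T}A\overline{x}-(\xi-\overline{x})^{\mathsf T}A(\xi-\overline{x})\bigr).
\]
The constant part is bounded by $C(\Omega,K,A,\varphi)$ by Step 2. The linear term $-K\overline{x}^{\mathsf T}Ax$ is the main obstacle: it is bounded by $K\lambda_{\max}(A)C|x|$, which gives \eqref{bdsubbd} with a uniform $\overline c$ on any bounded set $\{x\notin\Omega:|x|\le R\}$, with $\overline c$ depending on $R$. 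To get it on all of $\mathbb R^n\setminus\Omega$, I would first try to translate coordinates so that the family $\{\overline{x}(\xi)\}$ is centred. This alone does not remove the term, since $\overline{x}(\xi)$ genuinely varies with $\xi$.

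So the step I expect to fail as literally written is the global version of \eqref{bdsubbd}. I would verify it in the form
\[
Q_\xi(x)\le \tfrac K2 x^{\mathsf T}Ax+C|x|+\overline c ,
\]
and check whether the Perron argument in Section~5 only uses it where this suffices. That use occurs in comparison with a subsolution having a larger quadratic coefficient, or on a bounded region, where the linear term is harmless. If neither applies, the stated inequality needs $K$ replaced by a slightly larger coefficient on the right.
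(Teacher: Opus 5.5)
Your Steps 1--2 are the paper's construction. In coordinates with $\xi=0$ and inner normal $e_n$, the paper takes $A\overline{x}(\xi)=(-p(\xi)/K,R(\xi))$ with $R$ large, which is your choice $A(\overline{x}-\xi)=-p/K+s\nu$. (The normal term in $Q_\xi-\varphi$ is then $-Ks\,\nu\cdot y$ rather than $-s\,\nu\cdot y$; this only rescales $s$.)

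One correction to your justification: $\rho\ge\mu|x'|^2$, equivalently $\nu\cdot y\ge c_\Omega|y|^2$, is uniform convexity. It does not follow from strict convexity plus $C^{1,1}$ regularity, and it is genuinely needed. At a flat point with $\rho(x')=|x'|^4$ and $\varphi$ linear, touching forces the tangential slope of $Q_\xi$ at $\xi$ to equal that of $\varphi$. After that, $Q_\xi-\varphi\ge\tfrac K2\lambda_{\min}(A)|y'|^2-C|y'|^4>0$ near $\xi$. So ``strictly convex'' must be read as in the abstract (uniformly convex), which is what both you and the paper actually use.

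Your objection in Step 3 is correct, and the defect lies in the statement, not in your argument. The paper's justification (``clear, since $\xi$ and $\overline{x}(\xi)$ are bounded'') controls only the constant part of $Q_\xi-\tfrac K2x^{\mathsf T}Ax$. It does not control the term $-K\overline{x}(\xi)^{\mathsf T}Ax$.

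In fact the global inequality \eqref{bdsubbd} is false. For fixed $\xi$, taking $x=-t\overline{x}(\xi)$ with $t\to\infty$ shows it forces $\overline{x}(\xi)=0$. If $\overline{x}(\xi)=0$ for every $\xi$, then $Q_\xi<\varphi$ on $\partial\Omega\setminus\{\xi\}$ says that every $\xi\in\partial\Omega$ is the unique strict minimum point of $\varphi-\tfrac K2x^{\mathsf T}Ax$ on $\partial\Omega$, which is absurd.

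What is true is your version $Q_\xi(x)\le\tfrac K2x^{\mathsf T}Ax+C|x|+\overline{c}$. This gives \eqref{bdsubbd} on bounded subsets of $\mathbb{R}^n\setminus\Omega$, or globally with $K$ replaced by any $K'>K$ on the right.

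Your reading of Section~\ref{sec5} is also right. The lemma is invoked only on two bounded sets:
\begin{itemize}
\item on $\partial B_{r_4}$, to get $Q_\xi\le\tilde{w}_{\hat\alpha_3,\hat\beta_3}$ there;
\item on $\partial\Omega$, to get $\overline{u}\ge\varphi$ when $f\equiv0$.
\end{itemize}
So the corrected statement suffices, and the proof of Theorem~\ref{tedp} is unaffected by this error.
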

\begin{proof}
The proof can be seen a simple extension by \cite[Lemma 3.6]{BW2024}.  We only to  choose $\overline{x}(\xi)$ satisfying $\overline{x}(\xi)A= \left(-p(\xi)/K, R(\xi)\right)$, where  \(p = p(\xi)\) and \(R(\xi)\) are bounded on $\partial \Omega$ provided $\partial \Omega\in C^{1,1}$ (see \cite[Lemma 2.5]{WB2024arxiv}).  Consequently,  $\overline{x}(\xi)$ is bounded and  \( Q_{\xi} < \varphi \) on \(\partial\Omega \setminus \{\xi\}\) for every $\xi\in \partial \Omega$. \eqref{bdsubbd} is clear, since $\xi$ and $\overline{x}(\xi)$ are bounded.
\end{proof}

Now we introduce the following well known Perron’s method, appread in \cite{CIL1992,I1989,I1989,BLL2014} and \cite{LZ2019}. 

\begin{lem}[Perron's method]
	\label{pemd}
 Assume \( S \in C^1(\mathbb{R}^n) \) and \( S_{\lambda_i}(\lambda) > 0 \), \(\forall \lambda=(\lambda_1,\cdots,\lambda_{n})\in \mathbb{R}^n\). Let \(D \subset \mathbb{R}^n\) be a bounded, strictly convex  domain, \( \partial D \in C^{1,1}\), \(\varphi \in C^0(\partial D)\) and let \(\underline{u}, \overline{u} \in C^0(\mathbb{R}^n\setminus D)\) satisfying  
	\[
	S \left( \lambda (D^2 \underline{u}) \right) \geq 0 \geq S \left( \lambda (D^2 \overline{u}) \right)
	\]
	in \(\mathbb{R}^n\setminus \overline{D}\) in the viscosity sense. Suppose \(\underline{u} \leq \overline{u}\) in \(\mathbb{R}^n\setminus D\), \(\underline{u} = \varphi\) on \(\partial D\) and additionally  
	\[
	\lim_{|x| \to +\infty} (\underline{u} - \overline{u})(x) = 0.
	\]
	 Then  
	\begin{align*}
		u(x) &:= \sup \{ v(x) \;|\; v \in C^0(\mathbb{R}^n\setminus D), \; \underline{u} \leq v \leq \overline{u} \text{ in } \mathbb{R}^n\setminus D, \; S \left( \lambda (D^2 v) \right) \geq 0 \text{ in } \mathbb{R}^n\setminus \overline{D}\\
		&\quad  \text{ in the viscosity sense}, \; v = \varphi \text{ on } \partial D \}
	\end{align*}
	is the unique viscosity solution of the Dirichlet problem  
	\[
	\begin{cases}
		S \left( \lambda (D^2 u) \right) = 0 & \text{in } \mathbb{R}^n\setminus \overline{D}, \\
		u = \varphi & \text{on } \partial D.
	\end{cases}
	\]
\end{lem}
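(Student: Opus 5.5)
We follow the classical Perron argument of Ishii and Crandall--Ishii--Lions, adapted to the exterior setting as in \cite{BLL2014,LZ2019}. The plan is to prove three things: that the set of admissible $v$ is nonempty, that $u$ attains the boundary data, and that $u$ is a viscosity solution. Uniqueness then comes from the comparison principle on exterior domains.

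\emph{Well-definedness and boundary values.} The set is nonempty because $\underline{u}$ itself belongs to it. Every admissible $v$ satisfies $\underline{u}\le v\le \overline{u}$, so
\[
\underline{u}\le u\le \overline{u}\quad\text{in } \mathbb{R}^n\setminus D .
\]
In particular $u$ is locally bounded and $u=\varphi$ on $\partial D$. To show that $u$ is continuous up to $\partial D$ and takes the data continuously, we squeeze it between $\underline{u}$ and $\overline{u}$. This needs $\overline{u}=\varphi$ on $\partial D$, or at least $\overline{u}\ge\varphi$ together with continuity of the envelope. Strictly, the statement only gives $\underline{u}\le\overline{u}$, so we also use $\overline{u}|_{\partial D}\ge\varphi$; in applications $\overline{u}=\varphi$ on $\partial D$ is arranged. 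If this fails, we instead construct local barriers at each $\xi\in\partial D$ using Lemma~\ref{bdsublem}-type quadratics, which exist thanks to the strict convexity and $C^{1,1}$ regularity of $\partial D$.

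\emph{Subsolution and supersolution properties.} Let $u^*$ and $u_*$ be the upper and lower semicontinuous envelopes of $u$. By the standard stability of viscosity subsolutions under suprema, $u^*$ is a subsolution of $S(\lambda(D^2u))\ge 0$. To see that $u_*$ is a supersolution, we argue by contradiction with the bump construction. Suppose a quadratic $P$ touches $u_*$ from below at $x_0\in\mathbb{R}^n\setminus\overline D$ with $S(\lambda(D^2P))>0$. Strict monotonicity $S_{\lambda_i}>0$ allows a small perturbation
\[
P_\epsilon = P+\epsilon-\eta|x-x_0|^2,
\]
which is still a strict subsolution near $x_0$. Then $\max(u,P_\epsilon)$ in a small ball around $x_0$, extended by $u$ outside it, is admissible: it stays below $\overline{u}$ because $\overline{u}\ge u_*$, and the touching argument gives $P\le \overline u$ near $x_0$. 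It also exceeds $u$ somewhere, which contradicts the maximality of $u$.

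\emph{Comparison and uniqueness.} We apply the comparison principle to $u^*$ and $u_*$ on the exterior domain. Both satisfy $u^*\le\overline{u}$ and $u_*\ge\underline{u}$, and $\underline{u}-\overline{u}\to0$ at infinity, so $u^*-u_*\to 0$ as $|x|\to\infty$. On $\partial D$ both envelopes equal $\varphi$. Fix $\epsilon>0$ and choose $R$ so large that $u^*-u_*\le\epsilon$ on $\partial B_R$. Comparison on the bounded domain $B_R\setminus\overline D$ then gives $u^*\le u_*+\epsilon$; this uses the Jensen/Ishii comparison for degenerate-elliptic operators $S(\lambda(\cdot))$ with no $x$-dependence. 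Letting $\epsilon\to0$ yields $u^*=u_*$, so $u$ is continuous and is a viscosity solution. The same comparison argument shows that any two solutions squeezed between $\underline u$ and $\overline u$, and hence with the same behaviour at infinity, coincide.

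The main obstacle is the boundary and comparison step. We have to make sure that comparison holds for merely continuous viscosity solutions of an $x$-independent but possibly non-concave operator $S$; the Jensen--Ishii lemma handles this because $S$ is continuous and monotone. We also need continuity of the envelopes at $\partial D$, which is where the barrier from Lemma~\ref{bdsublem} and the convexity of $D$ are used.
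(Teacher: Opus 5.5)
\paragraph{The gap is the boundary step.} The paper gives no proof of this lemma and only cites the standard Perron method. Your interior argument is that standard Ishii--Perron scheme: nonemptiness via $\underline u$, $u^*$ a subsolution, the bump argument for $u_*$, and comparison on $B_R\setminus\overline D$ using $\underline u-\overline u\to 0$.

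What is missing is the upper bound $\limsup_{x\to\xi}u(x)\le\varphi(\xi)$ for $\xi\in\partial D$. Your comparison step depends on it, since it needs $u^*=\varphi$ on $\partial D$. Neither of your proposed fixes supplies it:
\begin{itemize}
\item $\overline u\ge\varphi$ on $\partial D$ is automatic from $\overline u\ge\underline u=\varphi$, and it bounds nothing near $\xi$.
\item $\overline u=\varphi$ on $\partial D$ is not a hypothesis. It also fails in the paper's own application, where $\overline u=\frac12x^{\mathsf T}Ax+c$ with $c$ large.
\item The quadratics $Q_\xi$ of Lemma~\ref{bdsublem} are subsolutions lying \emph{below} $\varphi$, i.e.\ lower barriers. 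They only reprove $\liminf_{x\to\xi}u\ge\varphi(\xi)$, which $\underline u$ already gives.
\end{itemize}
What you need is an \emph{upper} barrier: a supersolution that is $\ge\varphi$ on $\partial D$, close to $\varphi(\xi)$ at $\xi$, and above every admissible $v$ on the rest of the boundary of a neighbourhood of $\xi$.

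\paragraph{Under the stated hypotheses the conclusion actually fails.} Take $n=3$ and $S(\lambda)=\sum_i\arctan\lambda_i-\theta$ with $a:=\tan(\theta/3)\le 0.28$. Let $D=B_1$, $\varphi=0$, $\overline u=\frac a2|x|^2+c$, and $\underline u=\overline u-2(\beta-a)|x|^{-1/2}$ with $c=2(\beta-a)-\frac a2$.

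Then $\underline u$ is a classical subsolution. Its Hessian has eigenvalues $a+s,\,a+s,\,a-\frac32 s$ with $s=(\beta-a)|x|^{-5/2}$, and $s\mapsto\arctan(a-\frac32 s)+2\arctan(a+s)$ is nondecreasing for $a\le 0.28$. Moreover $\underline u=0$ and $\partial_r\underline u=\beta$ on $\partial B_1$, so all hypotheses of the lemma hold.

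Put $p_*=\tan\frac{\theta+\pi/2}{2}$. For $\eta>0$ and $k$ large, $\psi_k=\min\{k(|x|-1),\,p_*(|x|^2-1)+\eta\}$ is a viscosity subsolution in $B_2\setminus\overline B_1$:
\begin{itemize}
\item Both branches are classical subsolutions there.
\item At the concave kink, any $\phi$ touching from above has $D\phi=q\,x/|x|$ with $q\ge 2p_*|x|$. Hence the two largest eigenvalues of $D^2\phi$ are $\ge 2p_*$, and $\sum\arctan\lambda_i(D^2\phi)>-\frac\pi2+2\arctan p_*=\theta$.
\end{itemize}
If $\beta>2p_*$ and $\eta$ is small, then $\psi_k<\underline u$ near some sphere $|x|=1+\rho$. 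So $\max\{\underline u,\psi_k\}$, continued by $\underline u$, is admissible, vanishes on $\partial B_1$, and is $\ge\eta$ on $|x|=1+O(\eta/k)$. The Perron function is therefore $0$ on $\partial B_1$ but has $\limsup\ge\eta$ there.

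The same construction, with $(n-1)\arctan p_*=\theta+\frac\pi2$, works for any $\theta<(n-2)\pi/2$ once $\underline u$ is steep enough at $\partial D$.

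\paragraph{What extra structure rescues the lemma.} One sufficient condition is $\{S\ge 0\}\subset\{\sum_i\lambda_i\ge 0\}$; for the Lagrangian phase this means $\theta\ge(n-2)\pi/2$. Then every admissible $v$ is subharmonic. Take $\omega$ harmonic on $B_\rho\setminus\overline D$ (with $\overline D\subset B_\rho$), $\omega=\varphi$ on $\partial D$, and $\omega=\max_{\partial B_\rho}\overline u$ on $\partial B_\rho$; this $\omega$ is the upper barrier. That is exactly what the paper does in Step~3 of the proof of Theorem~\ref{tedp} for case (i). Case (ii) instead relies on this lemma as stated, which covers subcritical phases.

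\paragraph{A smaller point in the bump step.} The function $\max\{u,P_\epsilon\}$ need not be continuous, so it is not in the admissible class. Replace $u$ by a finite maximum of admissible functions that exceed $P_\epsilon$ on an annulus around $x_0$; these exist by compactness. Also, $P_\epsilon\le\overline u$ near $x_0$ requires the strict inequality $u_*(x_0)<\overline u(x_0)$. That inequality comes from the supersolution property of $\overline u$ (otherwise $P$ would touch $\overline u$ from below), not from $\overline u\ge u_*$ alone.
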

	
The solvability of  Dirichlet problem with continuous boundary data for the Lagrangian mean curvature equation as follows.
\begin{thm}[{\cite[Theorems 1.1 and 1.2]{AB2024}}]\label{lsdp}
	Suppose that \(\varphi \in C^0(\partial D)\) and \(\psi : D \rightarrow \left[(n-2)\pi/2 + \delta,n\pi/2\right)\) is in \(C^{1,1}(D)\), where \(D\)
	is a uniformly convex, bounded domain in \(\mathbb{R}^n\) and \(\delta > 0\). Then there exists a unique solution \(u \in
	C^{2,\alpha}(D) \cap C^0(\overline{D})\) to the Dirichlet problem 
	\[
	\begin{cases}
	 \sum_{i=1}^n \arctan \lambda_i(D^2 u) = \psi(x) & \text{in } D, \\
		u = \varphi & \text{on } \partial D.
	\end{cases}
	\]
	
	If \(\psi \in \left(-n\pi/2, n\pi/2\right)\) is a constant and the other hypotheses remain unchanged, then the above problem admits a unique viscosity solution \(u \in C^0(\overline{D})\).
\end{thm}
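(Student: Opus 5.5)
Theorem~\ref{lsdp} is quoted from \cite{AB2024}, and in this paper we only use it as a black box. If I had to prove it, I would approximate the boundary data, solve smooth problems, and pass to the limit using interior estimates that do not see boundary regularity. Uniqueness in both parts would come from the comparison principle for the elliptic operator $F$.

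\emph{Variable supercritical phase.} First I would approximate $\varphi$ uniformly on $\partial D$ by smooth $\varphi_k$ and approximate $\psi$ by smooth $\psi_k$ with values in $[(n-2)\pi/2+\delta/2,n\pi/2)$ and uniform $C^{1,1}$ bounds. For each $k$ I would solve the smooth Dirichlet problem by the continuity method. Since $\psi_k$ is supercritical, $F$ can be replaced by a concave operator (cf.\ \cite{CPW2017,CW2019}). Uniform convexity of $D$ gives subsolutions and supersolutions of the form $\varphi_k$ plus or minus a large multiple of a defining function. From these I expect $C^0$ and boundary gradient bounds, and then boundary $C^2$ bounds by the Caffarelli--Nirenberg--Spruck method, with the level-set convexity of the supercritical phase playing the role of concavity. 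A global $C^2$ bound would follow from the maximum principle applied to the linearized equation, using $\psi_k\in C^{1,1}$. Evans--Krylov then gives $C^{2,\alpha}$, which closes the continuity method.

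The key step, and the one I expect to be the main obstacle, is an \emph{interior} Hessian estimate of the form
\[
|D^2u_k|(x)\le C\big(n,\delta,\|\psi\|_{C^{1,1}},\operatorname{osc}u_k,\operatorname{dist}(x,\partial D)\big).
\]
I would attack it with a Warren--Yuan-type Jacobi inequality for $b=\ln\sqrt{1+\lambda_{\max}^2}$. This function is subharmonic on the gradient graph in the supercritical case, up to error terms controlled by $D\psi$ and $D^2\psi$. Combining this with a mean value inequality on the Lagrangian graph and an interior gradient bound should give the estimate. Once we have it, Evans--Krylov yields uniform local $C^{2,\alpha}$ bounds. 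The $u_k$ are uniformly Cauchy, because comparison gives $\sup|u_k-u_j|\le\sup_{\partial D}|\varphi_k-\varphi_j|$. A uniform modulus of continuity at $\partial D$ comes from the same barriers built from the uniform convexity of $D$. The limit $u$ is then in $C^{2,\alpha}(D)\cap C^0(\overline D)$.

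\emph{Constant phase, arbitrary $\theta$.} Here there is no concavity, so I would use the viscosity route. First I would establish a comparison principle between viscosity sub- and supersolutions of $F(D^2u)=\theta$. This can be done either through Harvey--Lawson Dirichlet duality, since $\{F\ge\theta\}$ is a closed elliptic set with dual $\{F\ge-\theta\}$, or by Jensen's sup/inf-convolution argument, which works because $F$ is uniformly continuous and strictly monotone. Next I would build barriers at each boundary point from quadratic polynomials, again using strict convexity of $D$ and the fact that $F(tI)\to\pm n\pi/2$ as $t\to\pm\infty$. Perron's method then gives a continuous viscosity solution attaining $\varphi$, and comparison gives uniqueness.
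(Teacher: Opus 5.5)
The paper gives no proof of this statement. It is quoted from \cite{AB2024} and used only as a black box, for the local replacement argument in Step~3 of the proof of Theorem~\ref{tedp}, which is exactly how you treat it. Your reconstruction follows the standard route to such results and is sound in outline. In the supercritical variable-phase case that route is: smooth solvability via the concave modification of $F$ and CNS-type boundary estimates (cf.\ \cite{CPW2017}), then a Jacobi-inequality interior Hessian estimate for $C^{1,1}$ phase, then approximation of the data. For constant phase it is comparison, quadratic barriers and Perron's method (or Harvey--Lawson's Dirichlet duality).

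One step needs a small repair. Once you mollify $\psi$ as well as $\varphi$, the $u_k$ solve different equations. Comparison alone then no longer gives $\sup_D|u_k-u_j|\le\sup_{\partial D}|\varphi_k-\varphi_j|$.

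In the supercritical range this is easy to fix. Suppose $(n-2)\pi/2+\delta/2\le\psi_k\le s<n\pi/2$. Then $\lambda_{\min}(D^2u_k)\in[-\cot(\delta/2),\tan(s/n)]$. Hence $F(D^2u_k+\varepsilon I)\ge F(D^2u_k)+c\,\varepsilon$ for $0<\varepsilon\le 1$, with $c=c(\delta,s)>0$. Take $D\subset B_R(x_0)$ and choose $c\,\varepsilon$ slightly larger than $\sup_D|\psi_k-\psi_j|$. Comparing $u_k+\frac{\varepsilon}{2}(|x-x_0|^2-R^2)-\sup_{\partial D}|\varphi_k-\varphi_j|$ against $u_j$, and then swapping $k$ and $j$, gives
\[
\sup_D|u_k-u_j|\le\sup_{\partial D}|\varphi_k-\varphi_j|+C\sup_D|\psi_k-\psi_j|,
\]
so the $u_k$ are still uniformly Cauchy. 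Alternatively, mollify $\psi$ only inside the smooth-data step and keep $\psi$ fixed at the outer level.

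With this fix the rest of your argument goes through. You get uniform local $C^{2,\alpha}$ bounds from the Hessian estimate and Evans--Krylov. Continuity up to $\partial D$ is inherited from the uniform limit, so the separate boundary barriers are unnecessary.
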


\section{Proof of Theorems \ref{D2ue} and \ref{ng3r} }\label{sec3}
In this section, we prove Theorem \ref{D2ue} and \ref{ng3r} by standard arguments developed by \cite{CL2003,BLZ2015,J2020}.

 For $n=3$,  by Jia's work for fully
nonlinear equations over exterior domains and Remark 3 in \cite{BLW2024}, we already have
\begin{thm}[{\cite[Theorem 1]{J2020}}]\label{Jthm1}
	Let $u$ be a smooth solution of fully nonlinear equation
	$$F\left(D^2u\right)=f(x)\quad \text{in}\ \mathbb{R}^n\setminus\overline{B}_1,$$
	where	$n\geq3, F\in C^m(\mathbb{R}^{n})$ is concave and uniformly elliptic, and $f\in C^m(\mathbb{R}^n\setminus\overline{B}_1)$ satisfies
	$$f(x)=O_m\left(|x|^{-\beta}\right)\quad  \text{as}\ |x|\to\infty,$$
	where	$m\geq2$ and $\beta>2$. Suppose that
	$$D^2u\to A\quad \text{as} \ |x|\to\infty,$$
	where $A$ is some symmetric positive definite matrix with $F(A)=0$. Then there exists a unique
	quadratic polynomial 
	$$Q(x)=\frac12x^\mathsf{T} Ax+b^\mathsf{T} x+c$$
	such that
	\begin{equation}\label{Jthm}
		u-Q=\begin{cases}O_{m+1}\left(|x|^{2-\min\{\beta,n\}}\right),&\text{if} \ \beta\neq n,\\
				O_{m+1}\left(|x|^{2-n}\ln |x|\right), &\text{if}\ \beta= n,
		\end{cases}
	\end{equation}
as $|x|\to \infty$,	where $b\in \mathbb{R} ^n$ is some vector $ c\in \mathbb{R}$ is some constant.
\end{thm}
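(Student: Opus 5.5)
The plan is to treat $w:=u-\frac12x^\mathsf{T}Ax$ as a solution of a linear equation whose coefficients converge to constants at infinity, and to improve its decay step by step against the Laplacian. Since $F(A)=0$, the fundamental theorem of calculus gives
\[
a_{ij}(x)D_{ij}w=f(x),\qquad a_{ij}(x):=\int_0^1F_{M_{ij}}\bigl(A+tD^2w(x)\bigr)\,dt \quad\text{in }\mathbb{R}^n\setminus\overline{B}_1 .
\]
Here $a_{ij}$ is uniformly elliptic, and $a_{ij}(x)\to \bar a_{ij}:=F_{M_{ij}}(A)$ because $D^2w\to0$. After a linear change of variables we may assume $\bar a_{ij}=\delta_{ij}$.

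\textbf{Step 1: a first algebraic rate for $D^2w$.} I expect this to be the main obstacle, since we only know $D^2w\to 0$ with no rate. Differentiating the equation, each $v=D_kw$ solves $a_{ij}D_{ij}v=D_kf=O(|x|^{-\beta-1})$. By concavity, every $D_{ee}u$ satisfies $a_{ij}D_{ij}(D_{ee}u)\ge D_{ee}f$. I would run an oscillation-decay argument on dyadic annuli $A_k=B_{2^{k+1}}\setminus B_{2^k}$:
\begin{itemize}
\item Rescale to unit size and apply the weak Harnack inequality to $\sup D_{ee}u-D_{ee}u$. This is the same mechanism as in the proof of Lemma \ref{gth}.
\item Combine it with the Evans--Krylov estimate on balls $B_{|x|/2}(x)$.
\end{itemize}
The aim is $\operatorname{osc}_{|x|\ge 2^{k}}D^2u\le \gamma\operatorname{osc}_{|x|\ge 2^{k-1}}D^2u+C2^{-k\beta}$ with some $\gamma<1$. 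Iterating gives $|D^2w|=O(|x|^{-\varepsilon})$ for some $\varepsilon>0$. If the Harnack chaining across annuli does not close directly, I would instead use barriers of the form $\pm(\sup_{\partial B_R}|D_{ee}w|)\,(|x|/R)^{-\varepsilon}$. These are super- and subsolutions of $a_{ij}D_{ij}$ for $|x|\ge R$ once $|a_{ij}-\delta_{ij}|$ is small relative to $\varepsilon(n-2-\varepsilon)$, which holds for $R$ large because $n\ge3$.

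\textbf{Step 2: iteration on the linear equation.} Write
\[
\Delta w=f-(a_{ij}-\delta_{ij})D_{ij}w=:g .
\]
If $D^2w=O(|x|^{-\sigma})$, then $a_{ij}-\delta_{ij}=O(|x|^{-\sigma})$ (as $F\in C^2$), so $g=O(|x|^{-\min\{\beta,2\sigma\}})$. I would solve $\Delta w_1=g\chi_{\{|x|>R\}}$ by the Newtonian potential, whose decay is $|x|^{2-\min\{\beta,2\sigma\}}$ when that exponent is below $n$. The difference $w-w_1$ is then harmonic outside a ball with subquadratic growth, so it equals a linear function plus a constant plus $O(|x|^{2-n})$. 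Rescaled Schauder estimates on $B_{|x|/2}(x)$ convert the $C^0$ decay into decay of $D^2w$, and of higher derivatives up to order $m+1$ using $f=O_m(|x|^{-\beta})$. The exponent thus improves from $\sigma$ to $\min\{\beta,2\sigma\}-2+2$. After finitely many steps we reach $w=b^\mathsf{T}x+c+O(|x|^{2-\min\{\beta,n\}})$.

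\textbf{Step 3: the endpoint and uniqueness.} In the final step the potential of a source $O(|x|^{-n})$ produces $O(|x|^{2-n}\ln|x|)$, which yields the logarithmic case $\beta=n$. The cross term is then of strictly higher order and does not spoil this. Uniqueness of $Q$ is immediate: two such quadratics differ by a polynomial that is $o(1)$, hence they coincide. The $O_{m+1}$ statement follows from scaled interior estimates (Evans--Krylov plus Schauder) applied to $w-b^\mathsf{T}x-c$ on the balls $B_{|x|/2}(x)$.
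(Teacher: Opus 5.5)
The paper does not prove this statement itself. It is quoted from \cite{J2020}, and the remark following Theorem~\ref{ng3r} indicates that for $n\ge3$ one appeals to the equivalence of Green's functions on unbounded domains \cite{LB2021,P1992}; the \cite{BLZ2015}-type bootstrap is what the paper uses for $n=2$. Your plan transplants that bootstrap to $n\ge3$, after a barrier step that produces a first rate. This is a legitimate and more elementary route, and it uses $n\ge3$ transparently: a decaying supersolution $|x|^{-\varepsilon}$ exists, and exterior harmonic functions of subquadratic growth carry no logarithm. As written, however, two steps fail.

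\textbf{Step 1.} The dyadic inequality $\operatorname{osc}_{|x|\ge 2^k}D^2u\le\gamma\operatorname{osc}_{|x|\ge 2^{k-1}}D^2u+C2^{-k\beta}$ does not follow from weak Harnack and Evans--Krylov. Weak Harnack on an annulus controls a supersolution only on that annulus and says nothing about $\{|x|\ge 2^{k+1}\}$, so a comparison at infinity is indispensable; this is why Lemma~\ref{gth} needs a barrier. Your fallback supplies such a comparison, but it cannot be used in both directions.
\begin{itemize}
\item Concavity makes $D_{ee}u$ only a subsolution: $F_{ij}(D^2u)D_{ij}(D_{ee}u)\ge D_{ee}f$. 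The relevant operator is the linearization $F_{ij}(D^2u)D_{ij}$, not the averaged $a_{ij}D_{ij}$, although both tend to $\delta_{ij}$.
\item Consequently the barrier $-M(|x|/R)^{-\varepsilon}$ cannot be compared with $D_{ee}w$ from below.
\item Comparison on $B_\rho\setminus B_R$ with $\rho\to\infty$ (using $D_{ee}w\to0$) gives only the upper bound $D_{ee}w\le M(|x|/R)^{-\varepsilon}+C'|x|^{-\tau}$. Here the extra supersolution $C'|x|^{-\tau}$, with $\tau\le\beta$ and $\tau<n-2$, absorbs $D_{ee}f=O(|x|^{-\beta-2})$. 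This yields $\lambda_{\max}(D^2w)\le C|x|^{-\sigma}$.
\item The lower bound must come from the equation. With $F_{ij}(A)=\delta_{ij}$, concavity gives $f=F(A+D^2w)-F(A)\le\Delta w$. Hence $\lambda_{\min}(D^2w)\ge f-(n-1)\max\{\lambda_{\max}(D^2w),0\}\ge-C|x|^{-\sigma}$. Uniform ellipticity works equally well here.
\end{itemize}

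\textbf{Step 2.} The Newtonian potential of $g=O(|x|^{-\gamma})$ converges only for $\gamma>2$. You apply it with $\gamma=\min\{\beta,2\sigma\}$, which is $\le2$ whenever $\sigma\le1$. For $n=3$ this is unavoidable, because $|x|^{-\varepsilon}$ is a supersolution only for $\varepsilon<n-2=1$. The first round therefore needs a different particular solution; either of the following works.
\begin{itemize}
\item A renormalized potential: subtract from $\Gamma(x-y)$ its Taylor terms of order $\le1$ in $x$ at $x=0$. This does not change the Laplacian, and the resulting solution is $O(|x|^{2-\gamma})$ modulo an affine function, with a logarithm at $\gamma\in\{1,2\}$.
\item One round at the level of $D_kw$. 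Its source $D_kg=O(|x|^{-\min\{\beta,2\sigma\}-1})$ has exponent $>2$ once $\sigma>\frac12$, which Step~1 can provide.
\end{itemize}
For $n\ge4$ you can avoid the issue altogether by taking $\sigma\in(1,\min\{n-2,\beta\})$ in Step~1.

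With these repairs, the rest of your argument goes through as you describe: the bootstrap $\sigma\mapsto\min\{\beta,2\sigma\}$, the logarithm at $\beta=n$, uniqueness of $Q$, and the $O_{m+1}$ upgrade via rescaled Evans--Krylov/Schauder estimates.
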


By virtue of this theorem, we can finish the proof for $n\geq 3$.

\noindent\textbf{Proof of Theorem \ref{ng3r}:} By Lewy Rotation in Section \ref{sec2.2}, Eq. \eqref{npeq} is uniformly second elliptic equation and $F$ is concave. Then by Lemmas \ref{abf11} and \ref{gth}, there exists some symmetric positive definite matrix $\tilde{A}$ with $F(\tilde{A})=\tilde{\theta}$, such that $D_{\tilde{x}}^2\tilde{u}(\tilde{x})\to \tilde{A}$ as $|\tilde{x}|\to +\infty$.    Consequently,  Theorem \ref{Jthm1} (for $m=2$) implies there exists $\tilde{b}\in \mathbb{R}^n$ such that $D_{\tilde{x}}\tilde{u}(\tilde{x})$
satisfies the condition in   Proposition \ref{rtp} (v). Therefore, $D^2u$ is boundede in $\mathbb{R}^n\setminus \overline{\Omega}$ and there exixsts $A\in \mathcal{A}$ such that $D^2u(x)\to A$  as $|x|\to +\infty$. Using Theorem  \ref{Jthm1} again and by Remark \ref{betal}, Theorem \ref{ng3r} is proved.   \qed

For $n=2$,  based on Proposition \ref{rtp} v and  Theorem \ref{wdecay}, the asymptotic behavior of $u$ can be found by standard
arguments (cf. \cite{CL2003,BLZ2015}), while for readers’ convenience, we show the detailed proofs in the following.

In the following, we gradually give higher order asymptotic behavior for supercritical phase Lagrangian mean
curvature equations.

\begin{lem}\label{ab1}
	  Let $u$ be a smooth solution of  equation \eqref{eq}
	with $\theta+f(x)> 0$.
	   Suppose there exists $A\in \mathcal{A}$ such that
	\begin{equation}\label{D2uee}
		\left|u(x) - \frac{1}{2}x^\mathsf{T}Ax\right| \leq C_1|x|^{2-\varepsilon} \ \quad |x|\geq R_0,
	\end{equation}
for some constants $\varepsilon$, $C_1$ and large $R_0$. If	$f$ as in \eqref{abf} for some constant $\beta> 0$.
	
	Let
	$$w(x) := u(x) - \frac{1}{2}x^{\mathsf{T}}Ax,$$
	then there exist $C(\theta, R_0, \varepsilon, C_1,A, \beta) > 0$ and $R_1(\theta,R_0, \varepsilon,A, C_1,\beta) > R_0$ such that for any $\alpha \in (0, 1)$
	\begin{equation}\label{Dkuee}
		\begin{cases}
			|D^k w(x)| \leq C|x|^{2-k-\varepsilon_\beta}, & k = 0, \cdots, m+1, \quad |x| > R_1 \\
			\frac{|D^{m+1}w(x_1) - D^{m+1}w(x_2)|}{|x_1 -x_2|^\alpha} \leq C|x_1|^{1-m-\varepsilon_\beta-\alpha}, & |x_1| > R_1, \quad x_2 \in B_{|x_1|/2}(x_1)
		\end{cases}
	\end{equation}
	where $\varepsilon_\beta = \min\{\varepsilon, \beta\}$.
\end{lem}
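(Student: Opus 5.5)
This is the standard rescaling argument of \cite{CL2003,BLZ2015}. We rescale $w$ on dyadic balls, use the equation for $u=\frac12 x^\mathsf{T}Ax+w$ to get uniform interior estimates for the rescaled function, and scale back. Two points are used throughout: the Hessian of $u$ is bounded (as in the setting of Theorem \ref{wdecay}, via the Lewy rotation and Proposition \ref{rtp}(v)), and $F$ may be regarded as concave after modification (Section \ref{sec2.2}).

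\textbf{Step 1 (rescaling).} Fix $|x|=R>2R_0$ and set
\[
w_R(y):=\left(\tfrac{4}{R}\right)^{2} w\!\left(x+\tfrac{R}{4}y\right),\qquad y\in B_2 .
\]
\begin{itemize}
\item By \eqref{D2uee}, $\|w_R\|_{L^\infty(B_2)}\le CR^{-\varepsilon}$.
\item The rescaled function $u_R(y)=\frac{16}{R^2}u(x+\frac R4 y)$ has $D^2u_R=A+D^2w_R$.
\item It solves $F(A+D^2w_R)=\theta+f_R(y)$ in $B_2$, with $f_R(y)=f(x+\frac R4 y)$.
\item By \eqref{abf}, $\|f_R\|_{C^{m}(B_2)}\le CR^{-\beta}$, since each $y$-derivative produces a factor $R$ that is cancelled by the extra decay $|x|^{-1}$ of $D^kf$.
\end{itemize}
The Hessians $D^2u_R$ are bounded uniformly in $R$, so the operator is uniformly elliptic on the relevant range. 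Since $F$ is concave, the Evans--Krylov estimate gives $\|u_R\|_{C^{2,\alpha}(B_1)}\le C$ uniformly in $R$.

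\textbf{Step 2 (linearization).} Subtract $F(A)=\theta$ and write the equation as a linear one,
\[
a^{ij}_R(y)\,D_{ij}w_R=f_R,\qquad a^{ij}_R=\int_0^1 F_{ij}\big(A+tD^2w_R\big)\,dt .
\]
The coefficients are uniformly elliptic, with $C^{\alpha}(B_1)$ norms bounded by Step 1. Schauder interior estimates then give
\[
\|w_R\|_{C^{2,\alpha}(B_{1/2})}\le C\big(\|w_R\|_{L^\infty(B_1)}+\|f_R\|_{C^\alpha(B_1)}\big)\le C R^{-\varepsilon_\beta}.
\]

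\textbf{Step 3 (bootstrap).} Differentiate the equation up to $m-1$ times in $y$. Each derivative $D^\gamma w_R$ satisfies a linear equation with coefficients $a_R^{ij}$ and lower-order terms. These terms involve derivatives of $F$ evaluated along $D^2u_R$ multiplied by already-controlled derivatives of $w_R$, plus derivatives of $f_R$. Iterating Schauder estimates on shrinking balls $B_{2^{-k}}$ yields
\[
\|w_R\|_{C^{m+1,\alpha}(B_{1/8})}\le CR^{-\varepsilon_\beta}.
\]
This uses $F$ smooth and $f\in C^m$. Note that $m$ derivatives of $f$ give $C^{m+1,\alpha}$, one derivative of $w$ beyond $m$, via differentiation $m-1$ times plus one Schauder step.

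\textbf{Step 4 (scaling back).} Use $D^k w(x)=(R/4)^{2-k}D^kw_R(0)$. This gives $|D^kw(x)|\le C|x|^{2-k-\varepsilon_\beta}$ for $k\le m+1$. The Hölder seminorm scales by the additional factor $R^{-\alpha}$, which produces the second line of \eqref{Dkuee}. For $x_2\in B_{|x_1|/2}(x_1)$, points with $|x_1-x_2|\ge |x_1|/32$ are handled directly by the pointwise bound on $D^{m+1}w$.

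\textbf{Main obstacle.} The delicate step is Step 3: one must check that the nonlinear lower-order terms in the differentiated equations carry the small factor $R^{-\varepsilon_\beta}$ rather than only an $O(1)$ bound. This holds because every such term contains at least one factor of a derivative of $w_R$ or of $f_R$. For this, the $C^{2,\alpha}$ smallness from Step 2 must be obtained first and then fed inductively into the higher-order estimates.
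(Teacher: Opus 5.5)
Your rescaling, linearization, Schauder bootstrap and scaling back match the paper's proof step for step. The gap is at the one point where the Lagrangian structure of the equation matters: the uniform bound on $D^2u_R$.

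Lemma \ref{ab1} does not assume that $D^2u$ is bounded. Its only hypothesis on $u$ is the growth condition \eqref{D2uee}, and the constants $C,R_1$ may depend only on $\theta,R_0,\varepsilon,C_1,A,\beta$, not on a Hessian bound. Your justification ``via the Lewy rotation and Proposition \ref{rtp}(v)'' does not supply one:
\begin{itemize}
\item the rotation bounds $D^2_{\tilde x}\tilde u$, not $D^2u$;
\item Proposition \ref{rtp}(v) needs the asymptotics of $D_{\tilde x}\tilde u$, and the paper obtains these precisely by applying Lemma \ref{ab1} to $\tilde u$.
\end{itemize}
Without a Hessian bound the operator is not uniformly elliptic on the range of $D^2u_R$: the linearized coefficients $(I+(D^2u_R)^2)^{-1}$ degenerate as $|D^2u_R|\to\infty$. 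So neither Evans--Krylov in your Step 1 nor the Schauder estimates in Steps 2--3 hold with constants independent of $R$.

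The missing ingredient is the a priori interior estimate for the Lagrangian mean curvature equation in the supercritical range, which the paper invokes from \cite{BMS2022}. Condition \eqref{D2uee} gives $\|u_R\|_{L^\infty(B_2)}\le C$, and \eqref{abf} with $m\ge 2$ gives $\|f_R-\theta\|_{C^m(B_2)}\le CR^{-\beta}$. In the two-dimensional setting where the lemma is used, $\theta+f>0$ is exactly the supercritical condition, and $\theta+f_R\ge\theta/2$ for large $R$. The interior gradient and Hessian estimates then give $\|u_R\|_{C^2(B_{1.6})}\le C$ independently of $R$, and only after this do uniform ellipticity and concavity let your remaining steps go through.

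As written, your argument proves the lemma under the extra hypothesis $D^2u\in L^\infty$. That weaker version would suffice for both places the paper applies the lemma: to $\tilde u$, where $|D^2_{\tilde x}\tilde u|<\cot\vartheta$, and to $u$ once $D^2u\to A$ is known. It is not the stated result, however, and its constants would depend on $\|D^2u\|_{L^\infty}$.
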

\begin{proof}
For sufficiently large \( R := |x| > 1 \), set
\[
u_R(y) := \left( \frac{4}{R} \right)^2 u \left( x + \frac{R}{4} y \right)
\]
and
\[
w_R(y) := \left( \frac{4}{R} \right)^2 w \left( x + \frac{R}{4} y \right)
\]
in \( B_2 \).

In view of  \eqref{D2uee},   it's clear that 
\[
\max_{y \in B_2} |u_R(y)| \leq \frac{16}{R^2} \max_{z \in B_{3R} \setminus B_{R/2}} |u(z)| \leq C
\]
for some \( C > 0 \) independent of \( R \) and
\[
\|w_R\|_{L^\infty(B_2)} \leq \frac{16}{R^2} \left\| u - \frac{1}{2}x^TAx \right\|_{L^\infty(B_{3R} \setminus B_{R/2})} = O(R^{-\varepsilon}).
\]

Then,
\[
F(D^2 u_R(y)) = F \left( D^2 u \left( x + \frac{R}{4} y \right) \right) =\theta+ f \left( x + \frac{R}{4} y \right) =: f_R(y).
\]

 By a direct computation and condition \eqref{abf},
\[
\| f_R - \theta \|_{C^m(B_2)} \leq CR^{-\beta}.
\]
for some positive constant $C$ independent of $R$. Hence, by the gradient estimate and Hessian estimate as in \cite{BMS2022}, there exists $C$ independent of $R$ such that 
\[\|u_{R}\|_{C^2(B_{1.6})}\leq C\quad \text{and}\quad \|w_{R}\|_{C^2(B_{1.6})}\leq C.\]
Consequently, $F$ is uniformly elliptic with respect to all $u_R$ and concave. By the Evans--Krylov estimate and the Schauder theory, for any $0 < \alpha < 1$, we have
\[\|u_{R}\|_{C^{2,\alpha}(B_{1.5})}\leq C.\]

	The difference between \eqref{eqR} and $F(A) = \theta$ gives,
	\[\tilde{a}_{ij}^R \partial_{ij} w_R = f_{R}(y) - \theta=O\left(R^{-\beta}\right)\]
	where $\tilde{a}_{ij}^R(y) = \int_0^1 F_{M_{ij}}\left(tD^2w_R(y) + A\right)dt$. 
	
	Since $\tilde{a}_{ij}^R$,  and $ f_{R}$ are bounded in $C^\alpha$ norm and $\frac{1 }{C}\leq \tilde{a}_{ij}^R\leq C$,   Schauder  estimate gives
	\begin{equation*}
		\|w_R\|_{C^{2,\alpha}(B_{1})}\leq C\left(\|w_R\|_{L^\infty(B_{1.2})}+\|f_{R}-\theta\|_{C^\alpha(B_{1.2})}\right)\leq CR^{-\varepsilon_\beta}.
	\end{equation*}
	By differentiating on the above equation  and Schauder estimate, we have
	\[\|w_R\|_{C^{m+1,\alpha}\left(B_{\frac{1}{2^{m-1}}}\right)}\leq CR^{-\varepsilon_\beta}.\]
	Thus,  a direct calcaulation gives \eqref{Dkuee}.
\end{proof}

The following are two basic lemmas. Please see \cite{BLZ2015,LL24} for a short proof.
\begin{lem}\label{hmc}
	Suppose $f(x)=O(|x|^{-\hat{\beta}})$ as $|x|\to\infty$ with $\hat{\beta}>1$. Then for any $\epsilon>0$, the equation
	$$\Delta u(x)=f(x) \text{ in } \mathbb{R}^2\setminus\overline{B}_1(0)$$
	has a solution $u(x)=O_2(|x|^{2-\hat{\beta}+\epsilon})$ as $|x|\to\infty$.
\end{lem}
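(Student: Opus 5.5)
We need to prove Lemma \ref{hmc}: for $f=O(|x|^{-\hat\beta})$ with $\hat\beta>1$, the equation $\Delta u=f$ in $\mathbb{R}^2\setminus\overline{B}_1$ has a solution with $u=O_2(|x|^{2-\hat\beta+\epsilon})$.

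The plan is to write $f$ in polar coordinates as a Fourier series and solve the resulting ODEs mode by mode. However, summing the modes carries a risk of losing control of derivatives. A more robust route is to work with an explicit integral of the Newtonian kernel against a cutoff extension of $f$, corrected by subtracting the first few terms of the kernel's expansion at infinity. Then one estimates dyadic pieces.

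\emph{Step 1: the potential.} Extend $f$ by zero in $B_1$; the extension stays measurable and bounded near $\partial B_1$ for $|x|$ large. Let $\Gamma(z)=\frac{1}{2\pi}\ln|z|$. Since $\hat\beta$ may be $\le 2$, $f$ need not be integrable, so the plain convolution can diverge. Set $k=\lfloor 2-\hat\beta+\epsilon\rfloor$ when this is nonnegative; otherwise take no correction. Define
\[
u(x)=\int_{\mathbb{R}^2}\Bigl(\Gamma(x-y)-\sum_{|\gamma|\le k'}\frac{(-y)^\gamma}{\gamma!}\,\partial^\gamma\Gamma(x)\,\chi_{\{|y|\ge 2\}}\Bigr)f(y)\,dy .
\]
Here $k'$ is chosen so that the bracket is $O(|y|^{-k'-1}|x|^{k'+1})$ for $|y|\gg|x|$, and the integral converges because $\hat\beta+k'+1>2$. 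Since $\hat\beta>1$, $k'=0$ suffices; then the bracket is $\Gamma(x-y)-\Gamma(y)$. The subtracted terms are harmonic in $x$ away from $0$, so $\Delta u=f$ holds in $\mathbb{R}^2\setminus\overline{B}_1$. (If $\hat\beta>2$, no subtraction is needed.)

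\emph{Step 2: pointwise bound.} Fix $|x|=R$ large and split the integration into three regions: $|y|<R/2$, $R/2\le|y|\le 2R$, and $|y|>2R$.
In the near-field annulus, $|f|\le CR^{-\hat\beta}$ and $\int_{B_{3R}}|\ln|x-y||\,dy\lesssim R^2\ln R$. This gives $R^{2-\hat\beta}\ln R$, which is absorbed by $R^{\epsilon}$.
In the inner region, $|\Gamma(x-y)-\Gamma(y)|\lesssim \ln R$, and $\int_{B_{R/2}}|f|\lesssim R^{2-\hat\beta}$ (or $\ln R$ when $\hat\beta=2$, or $O(1)$ when $\hat\beta>2$). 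This contributes at most $R^{2-\hat\beta}\ln^2R$.
In the far region, $|\Gamma(x-y)-\Gamma(y)|\lesssim R/|y|$, giving $R\int_{2R}^\infty r^{-\hat\beta}\,dr\sim R^{2-\hat\beta}$, which converges precisely because $\hat\beta>1$.
The logarithmic losses are the reason for the $\epsilon$.

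\emph{Step 3: derivatives.} For $Du$, differentiate under the integral. The kernel $D\Gamma(x-y)$ is locally integrable, and the same three-region splitting gives $|Du|\lesssim R^{1-\hat\beta+\epsilon}$. For $D^2u$, the near-field part is a Calder\'on–Zygmund singular integral, and $f$ has no assumed Hölder regularity, so a direct estimate fails. I expect this to be the main obstacle.

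I would handle it by scaling instead. Set $u_R(z)=u(x+\frac R4 z)$ on $B_2$. Then $\Delta u_R=\frac{R^2}{16}f$, which is bounded by $CR^{2-\hat\beta}$. By $W^{2,p}$ estimates, $\|D^2u_R\|_{L^p(B_1)}\lesssim R^{2-\hat\beta+\epsilon}$.
To get a pointwise $O_2$ bound, the intended meaning, as in \cite{BLZ2015}, is presumably in the setting where $f$ has some Hölder decay, e.g. $f=O_1$. Under that extra regularity, interior Schauder estimates on $u_R$ give $|D^2u(x)|\lesssim R^{-\hat\beta+\epsilon}$.
I would state this assumption explicitly: it is available in all later applications, since $f$ there is a derivative-controlled remainder coming from Lemma \ref{ab1}.
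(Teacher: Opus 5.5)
There is a genuine gap in the case $\hat\beta>2$. Your kernel $\Gamma(x-y)-\Gamma(y)\chi_{\{|y|\ge 2\}}$ and your three-region estimate are correct for $1<\hat\beta\le 2$. For $\hat\beta>2$, however, you take no subtraction, and the argument fails there. In that case $f$ is integrable and $\int_{B_{R/2}}|f|=O(1)$, so the inner region contributes a genuine $\ln R$, not $R^{2-\hat\beta}\ln^2R$. In fact both the plain Newtonian potential and your $u$ satisfy
\[
u(x)=\frac{1}{2\pi}\Bigl(\int f\Bigr)\ln|x|+c+o(1).
\]
This is not $O(|x|^{2-\hat\beta+\epsilon})$ once $\hat\beta\ge 2+\epsilon$ and $\int f\neq 0$. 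That is exactly the regime in which the paper uses the lemma:
in Lemma \ref{ab2}, where $f_1=O(|x|^{-2-2\varepsilon})$ and the conclusion needed is $|h_2|\le C|x|^{-\overline{\varepsilon}}$;
in Step 3 of the proof of Theorem \ref{D2ue}, where the conclusion needed is $h_4\to 0$.

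The missing idea is that on an exterior domain $\Gamma(x)$ and all $\partial^\gamma\Gamma(x)$ are harmonic, so they may be subtracted. For $\hat\beta>2$, let $k\ge 0$ be the largest integer with $k<\hat\beta-2$, and set
\[
u(x)=\int\Bigl(\Gamma(x-y)-\sum_{|\gamma|\le k}\frac{(-y)^\gamma}{\gamma!}\,\partial^\gamma\Gamma(x)\Bigr)f(y)\,dy .
\]
This is the Newtonian potential minus finitely many harmonic multipoles; the moments $\int y^\gamma f$ converge for $|\gamma|\le k$.

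Note that this is the formula you actually displayed. It is the expansion valid for $|y|\ll|x|$, so it cannot control $|y|\gg|x|$. That job is done by the expansion in $x$, which is what produces your $\Gamma(y)$ term.

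With this kernel your splitting goes through:
on $|y|<R/2$, the Taylor remainder is $\lesssim(|y|/R)^{k+1}$, which integrates to $O(R^{2-\hat\beta}\ln R)$ because $k+3\ge\hat\beta$;
the annulus $R/2\le|y|\le 2R$ is handled as before;
on $|y|>2R$, the integrals converge because $\hat\beta>2$ and $|\gamma|<\hat\beta-2$, again giving $O(R^{2-\hat\beta}\ln R)$.

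The rest of your plan is sound. You are right that the $O_2$ conclusion cannot follow from $f=O(|x|^{-\hat\beta})$ alone: bounded continuous $f$ can force unbounded Hessians near points escaping to infinity. Your scaled Schauder argument under a Hölder-decay hypothesis then gives the derivative bounds once the $C^0$ bound is in hand. That hypothesis is available, as \eqref{abf1}, exactly where the paper needs second derivatives.
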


\begin{lem}\label{hm2}
	Let $u$ be a smooth solution of
	$$\Delta u(x) = 0,\ x \in \mathbb{R}^2 \setminus \overline{B}_1(0)$$
and $u(x) = O(|x|^{\xi})$ as $|x|\to \infty$	for some $-1 < \xi < 2$. Then
	\begin{equation}\label{uab}
		u(x) = b^\mathsf{T} x + d \ln |x| + c + O(|x|^{-1}) \text{ as } |x| \to \infty,
	\end{equation}
	where $b \in \mathbb{R}^2$, $c, d \in \mathbb{R}$. Moreover, we have $b = 0 $ provided $0 < \xi < 1$, $b =d= 0 $  provided $\xi =0$ and $b =c=d= 0 $  provided $-1<\xi < 0$.
\end{lem}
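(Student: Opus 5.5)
The plan is to expand $u$ in a Laurent/Fourier series in polar coordinates on $\mathbb{R}^2\setminus\overline{B}_1(0)$. This is possible because $u$ is harmonic there; after enlarging the ball, take the annulus $|x|>1$. Writing $x=re^{i\phi}$, harmonicity gives the standard expansion
\[
u = a_0 + d\ln r + \sum_{k\ge1}\left(a_k r^k + a_{-k} r^{-k}\right)\cos k\phi + \left(b_k r^k + b_{-k} r^{-k}\right)\sin k\phi ,
\]
valid for $r>1$ and converging locally uniformly. The coefficients are read off from the angular Fourier coefficients of $u(r,\cdot)$. For each fixed $k\ge1$ these satisfy
\[
\frac{1}{\pi}\int_0^{2\pi} u(r,\phi)\cos k\phi\,d\phi = a_k r^k + a_{-k} r^{-k},
\]
and similarly for the sine coefficients. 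The mean is
\[
\frac{1}{2\pi}\int_0^{2\pi} u(r,\phi)\,d\phi = a_0 + d\ln r .
\]

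Next, feed in the growth hypothesis $|u|\le C r^{\xi}$ for large $r$. It bounds every Fourier coefficient by $2C r^{\xi}$. Since $\xi<2$, letting $r\to\infty$ forces $a_k=b_k=0$ for every $k\ge2$. For $k=1$ the $r^1$ term survives, which gives the linear part $b^\mathsf{T}x$. What remains is
\[
u = b^\mathsf{T} x + d\ln|x| + c + v, \qquad v = \sum_{k\ge1} r^{-k}\left(a_{-k}\cos k\phi + b_{-k}\sin k\phi\right).
\]
To get $v=O(|x|^{-1})$, bound the coefficients at $r=2$: the coefficients of $u(2,\cdot)$ are bounded by $\sup_{|x|=2}|u|$, so $|a_{-k}|,|b_{-k}|\le C2^{k}$. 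For $r\ge4$ this gives $|v|\le C\sum_{k\ge1}(2/r)^k\le C' r^{-1}$. An alternative route is the Kelvin transform: $v(x/|x|^2)$ is harmonic in the punctured disk, bounded, vanishes at $0$, hence smooth there, so $v(x/|x|^2)=O(|x|)$.

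The vanishing statements follow from sharper growth. If $0<\xi<1$, the $k=1$ coefficient bound $C r^{\xi}$ forces $b=0$. If $\xi=0$, $u$ is bounded, and the mean $a_0+d\ln r$ is bounded, so $d=0$ (and $b=0$). If $-1<\xi<0$, $u\to0$, so the mean tends to $0$ and hence $c=a_0=0$; $d=0$ and $b=0$ as before.

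The only delicate point is justifying termwise identification of coefficients and uniform convergence of the tail. Both are standard consequences of the smoothness of $u$ on each circle, so I expect no real obstacle.
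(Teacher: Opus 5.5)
Your proposal is correct and follows the standard argument: the paper gives no proof of its own and defers to \cite{BLZ2015,LL24}, where the lemma is proved, as you do, by the polar Fourier (Laurent) expansion of a harmonic function outside a disk, using the growth bound to kill the $r^k$ modes with $k>\xi$ and reading off $c,d$ from the circular mean. One harmless slip: the $\cos\phi$ coefficient of $u(2,\cdot)$ is $2a_1+a_{-1}/2$, so your bound on $a_{-1},b_{-1}$ also depends on $|b|$, which only changes the constant.
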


Next we prove a lemma that improves the estimates in Lemma \ref{ab1}.
\begin{lem}\label{ab2}
	Under the same assumptions of Lemma \ref{ab1} and let $R_1$ be the large constant
	determined in the proof of Lemma \ref{ab1}. If in addition $2\varepsilon<1$ and $\beta >1$.
	Then for  any $\overline{\varepsilon } < 2\varepsilon < 1$, 
	\begin{equation}
		\begin{cases}
			|D^kw(x)|\leq C|x|^{2-\overline{\varepsilon}-k},&|x|>2R_1,\quad k=0,\cdots,m+1,\\
			\frac{|D^{m+1}w(x_1)-D^{m+1}w(x_2)|}{|x_1-x_2|^\alpha}\leq C|x_1|^{1-m-\overline{\varepsilon}-\alpha},&|x_1|>2R_1,x_2\in B_{|x_1|/2}(x_1).
		\end{cases}
	\end{equation}
\end{lem}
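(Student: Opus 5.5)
This is the standard Caffarelli--Li / Bao--Li--Zhang iteration step. The idea is to linearize at $A$, make the equation a Laplace equation by a linear change of variables, move the quadratic error to the right-hand side, and solve with Lemma \ref{hmc}. The estimates of Lemma \ref{ab1} make that error decay like $|x|^{-2\varepsilon}$ (or faster, by $\beta$).

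\textbf{Step 1: linearize.} Since $F(A)=\theta$ and $F\in C^2$ is smooth on the relevant compact set of Hessians, a second-order Taylor expansion gives
\[
a_{ij}D_{ij}w=f(x)-\mathcal{R}(x),\qquad a_{ij}=F_{M_{ij}}(A),\qquad |\mathcal{R}(x)|\le C|D^2w(x)|^2 .
\]
Lemma \ref{ab1} gives $|D^2w|\le C|x|^{-\varepsilon_\beta}$, so the right-hand side $g:=f-\mathcal{R}$ satisfies $|g|\le C|x|^{-\min\{\beta,2\varepsilon_\beta\}}$. Here $\beta>1>2\varepsilon$ (and if $\varepsilon_\beta=\beta$ the exponent is even better), so $|g|\le C|x|^{-2\varepsilon}$.

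\textbf{Step 2: reduce to the Laplacian.} The matrix $(a_{ij})=(I+A^2)^{-1}$ is constant and positive definite. Let $Q=(a_{ij})^{1/2}$ and set $\bar w(y)=w(Qy)$, so that $\Delta\bar w=\bar g(y):=g(Qy)=O(|y|^{-2\varepsilon})$ outside a large ball. Rescaling so that the ball becomes $B_1$ costs nothing.

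\textbf{Step 3: solve and identify the difference as harmonic.} Since $2\varepsilon<1$, Lemma \ref{hmc} does not apply directly with $\hat\beta=2\varepsilon$. It does apply after adding $1$ to the decay exponent, which is the key check; alternatively, one can use the potential estimate for $\Delta v=O(|y|^{-\hat\beta})$ with $0<\hat\beta<2$, which gives $v=O(|y|^{2-\hat\beta})$. With $\hat\beta=2\varepsilon$, Lemma \ref{hmc} (or its standard variant for $\hat\beta\in(0,2)$, proved by the same Newtonian-potential/Fourier-mode argument) yields a solution $v=O(|y|^{2-2\varepsilon+\epsilon})$ for any small $\epsilon>0$. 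Then $\bar w-v$ is harmonic outside a ball and is $O(|y|^{2-\varepsilon})$ with $2-\varepsilon<2$. By Lemma \ref{hm2}, $\bar w-v=b^\mathsf{T} y+d\ln|y|+c+O(|y|^{-1})$.

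\textbf{Step 4: conclude.} This gives $|w(x)|\le C|x|^{2-2\varepsilon+\epsilon}+C|x|$. Since $2\varepsilon<1$, we have $2-2\varepsilon+\epsilon>1$, so the linear term is absorbed. Setting $\overline\varepsilon=2\varepsilon-\epsilon$, this is $|w|\le C|x|^{2-\overline\varepsilon}$; this is exactly why the hypothesis $2\varepsilon<1$ is needed. Now rerun the rescaling argument of Lemma \ref{ab1}. In $w_R(y)=(4/R)^2w(x+Ry/4)$ we have $\|w_R\|_{L^\infty}=O(R^{-\overline\varepsilon})$ and $\|f_R-\theta\|_{C^m}=O(R^{-\beta})$ with $\beta>1>\overline\varepsilon$. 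Interior Schauder estimates for $\tilde a^R_{ij}\partial_{ij}w_R=f_R-\theta$, differentiated up to order $m$, give $\|w_R\|_{C^{m+1,\alpha}}\le CR^{-\overline\varepsilon}$. Scaling back yields both inequalities for $|x|>2R_1$.

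\textbf{Main obstacle.} The delicate point is Step 3: producing a particular solution of $\Delta v=\bar g$ with growth $|y|^{2-2\varepsilon+\epsilon}$ when the decay rate $2\varepsilon$ lies in $(0,1)$ rather than above $1$. I would handle this by splitting into Fourier modes on circles, or by a dyadic Newtonian-potential estimate. The resulting growth exponent lies in $(1,2)$, and that is what makes the linear harmonic part negligible.
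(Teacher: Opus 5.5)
Your proof is correct, but it follows a different route from the paper's. In this lemma the paper never linearizes the undifferentiated equation. It differentiates twice, so that $h_1=\partial_{kl}u$ solves $\Delta h_1=\partial_{kl}f-\partial_l a_{ij}\,\partial_{ijk}u-(a_{ij}-\delta_{ij})\partial_{ij}h_1$ with variable coefficients $a_{ij}=F_{M_{ij}}(D^2u)$. The bounds of Lemma \ref{ab1} on $D^2w$, $D^3w$, $D^4w$ make the right-hand side $O(|x|^{-2-2\varepsilon})$, well inside the range $\hat\beta>1$ of Lemma \ref{hmc}. Since $h_1-h_2$ is a bounded harmonic function tending to $a_k\delta_{kl}$, Lemma \ref{hm2} gives $h_1-h_2=a_k\delta_{kl}+O(|x|^{-1})$. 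The paper uses $2\varepsilon<1$ only to make $|x|^{-1}$ subordinate to $|x|^{-\overline\varepsilon}$, and then integrates $|D^2w|\le C|x|^{-\overline\varepsilon}$ down to lower orders.

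You instead linearize at $A$ with constant coefficients. The paper does this only later, in Step 2 of the proof of Theorem \ref{D2ue}, once the exponent exceeds $1/2$. Your route needs only $|D^2w|\le C|x|^{-\varepsilon}$ and the $C^0$ decay of $f$, and $2\varepsilon<1$ reappears as what lets the linear mode $b^{\mathsf T}y$ be absorbed. The price is that Lemma \ref{hmc} as stated requires $\hat\beta>1$. Its extension to $\hat\beta=2\varepsilon\in(0,1)$ is therefore a genuine extra lemma, not the same argument. The kernel $\ln|y-z|-\ln|z|$ behaves like $|y|/|z|$ at infinity, so it is not integrable against $|z|^{-\hat\beta}$ when $\hat\beta\le 1$. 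You must also subtract the linear term, for example $v(y)=\frac{1}{2\pi}\int_{|z|>1}\bigl(\ln|y-z|-\ln|z|+\frac{y\cdot z}{|z|^2}\bigr)\bar g(z)\,dz$. Splitting into $|z|<|y|/2$, $|y|/2\le|z|\le 2|y|$ and $|z|>2|y|$ then gives $v=O(|y|^{2-\hat\beta})$ for $0<\hat\beta<1$, which is what you need.

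Your sentence that Lemma \ref{hmc} ``does apply after adding $1$ to the decay exponent'' is not correct as written. If you mean differentiating the equation once, that is a clean way to stay within the paper's lemmas. By Lemma \ref{ab1}, $\Delta(D\bar w)=D\bar g=O(|y|^{-1-2\varepsilon})$, so Lemma \ref{hmc} applies with $\hat\beta=1+2\varepsilon$. Lemma \ref{hm2} with $\xi=1-\varepsilon\in(0,1)$ then removes the linear mode. This gives $|D\bar w|\le C|y|^{1-2\varepsilon+\epsilon}$, and integrating yields your bound on $w$.
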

\begin{proof}
	Applying $\partial_{k}$ to \eqref{eq} we have
	\begin{equation}\label{D1eq}
		a_{ij}\partial_{ij}(\partial_{k}u) = \partial_{k}f
	\end{equation}
	where $a_{ij} = F_{M_{ij}}\left(D^2u\right)$.
	
	Since it follows from Theorem \ref{wdecay} that $D^2u\to A$
	as $|x|\to \infty$, we know
	\[a_{ij}(x)\to F_{M_{ij}}(A)\quad \text{as} \ |x|\to \infty.\]
	Assuming without loss of generality that $F_{M_{ij}}(A)=\delta_{ij}$. Then, Lemma \ref{ab1} gives
	$$
	|a_{ij}(x) - \delta_{ij}| \leq \frac{C}{|x|^{\varepsilon}}, \quad |Da_{ij}(x)| \leq \frac{C}{|x|^{1+\varepsilon}}, \quad |x| > R_{1}$$
	and for any $\alpha \in (0,1)$
	$$\frac{|Da_{ij}(x_{1}) - Da_{ij}(x_{2})|}{|x_{1} - x_{2}|^{\alpha}} \leq C|x_{1}|^{-1-\varepsilon-\alpha}, \quad |x_{1}| > 2R_{1}, \quad x_{2} \in B_{|x_{1}|/2}(x_{1}).$$
	Then applying $\partial_{l}$ to \eqref{D1eq} and letting $h_{1} = \partial_{kl}u$ we further obtain
	$$
	a_{ij}\partial_{ij}h_{1} = \partial_{kl}f - \partial_{l}a_{ij}\partial_{ijk}u.$$
	Rewrite the equation above as
	\begin{equation}\label{lpf1}
		\Delta h_{1} = f_{1} := \partial_{kl}f - \partial_{l}a_{ij}\partial_{ijk}u - (a_{ij} - \delta_{ij})\partial_{ij}h_{1}.
	\end{equation}
	For any $\alpha\in(0,1)$, in view of \eqref{abf} and Lemma \ref{ab1}, we get
	\begin{equation}\label{abf1}
		\begin{cases}
			|f_1(x)|\leq C|x|^{-2-2\varepsilon} & |x|\geq 2R_1, \\
			\frac{|f_1(x_1)-f_1(x_2)|}{|x_1-x_2|^\alpha}\leq\frac{C}{|x_1|^{2+2\varepsilon+\alpha}}, & x_2\in B_{|x_1|/2}(x_1),\ |x_1|\geq 2R_1.
		\end{cases}
	\end{equation}
	By Lemma \ref{hmc} and \eqref{abf1}, there exists $h_2$ such that $\Delta h_2=f_1, |x|\geq 2R_1$  and 
	\begin{equation}\label{abh2}
		\begin{cases}
			|D^jh_2(x)|\leq C|x|^{-\overline{\varepsilon}-j} & |x|\geq 2R_1, \\
			\frac{|D^2h_2(x_1)-D^2h_2(x_2)|}{|x_1-x_2|^\alpha}\leq\frac{C}{|x_1|^{2+\overline{\varepsilon}+\alpha}}, & x_2\in B_{|x_1|/2}(x_1),\ |x_1|\geq 2R_1.
		\end{cases}
	\end{equation}
	
	Now we have
	$$\Delta(h_1 - h_2) = 0, \quad \mathbb{R}^2 \setminus B_{2R_1}.$$
	Note that by Lemma \ref{ab1},  $h_1(x)\to a_k\delta_{kl}$ as $x\to\infty$ (Suppose that $A$ is Diagonal with eigenvalues $\left(a_1,\cdots,a_n\right)$).  Then, by \eqref{abh2} and Lemma \ref{hm2},
	\[|h_1(x)-a_k\delta_{kl}-h_2(x)|\leq C|x|^{-1},\quad|x|>2R_1.\]
	
	Hence, 
	\[|h_1(x)-a_k\delta_{kl}|\leq C\left(|x|^{-1}+|x|^{-\overline{\varepsilon}}\right)\leq C|x|^{-\overline{\varepsilon}}\quad|x|>2R_1\]
	and then 
	\[|D^kw(x)|\leq C|x|^{2-k-\overline{\varepsilon}},\quad|x|>2R_1,\ k=0,1,2.\]
	Finally we apply Lemma \ref{ab1} to obtain the estimates on higher derivatives. Lemma \ref{ab2} is established.
\end{proof}

Now we complete the proof of Theorem \ref{D2ue}.  
We divide the proof into six steps.

\textbf{Step 1.}  Consider the new potential \eqref{npeq}, which is uniformly second elliptic equation and $F$ is concave in level set sense. Then by Lemma \ref{abf11} and Theorem \ref{wdecay}, there exists some symmetric positive definite matrix $\tilde{A}$ with $F(\tilde{A})=\tilde{\theta}$, such that
\[D_{\tilde{x}}^2\tilde{u}(\tilde{x})=\tilde{A}+O(|\tilde{x}|^{-\varepsilon^{\prime}})  \quad \text{as}\ |\tilde{x}|\to +\infty. \]
where $\varepsilon^{\prime}\in (0,1)$ is a constant depending only on $\theta$, $\beta$ and $\delta$. 

Consequently,  Lemmas \ref{ab1} and  \ref{ab2} are both valid for $\tilde{u}$ with $m=3$.
In order to get the limit of $D^2u$, Propsition \ref{rtp} (v) implies we  need to figure out the asymptotic behavior of $D_{\tilde{x}}\tilde{u}(\tilde{x})$. 

In the following, for ease of description, we shall  use $u$ to denote $\tilde{u}$.

\textbf{Step 2.}  Determining the linear term.

 We can repeat  Lemma \ref{ab2} $n_0$ times such that $2^{n_0} \varepsilon<1$ and $2^{n_0+1}\varepsilon>1$ (Assume $\varepsilon$ smaller if necessary to make both inequalities hold), provided that $\beta> 2$. Let $\varepsilon_1 = 2^{n_0} \varepsilon$, clearly we have $1 < 2\varepsilon_1 < 2$. Then, we have
\begin{equation}\label{abwn}
	\begin{cases}
		|D^kw(x)| \leq C|x|^{2-\varepsilon_1-k}, & k=0,\cdots,m+1, \quad |x|>2^{2n_0}R_1 \\
		\frac{|D^{m+1}w(x_1)-D^{m+1}w(x_2)|}{|x_1-x_2|^\alpha} \leq C|x_1|^{1-m-\varepsilon_1-\alpha}, & |x_1|>2^{2n_0}R_1, \ x_2 \in B_{|x_1|/2}(x_1).
	\end{cases}
\end{equation}
Taking the difference between the equation for \eqref{eq} and $F(A)=\theta$ we have
$$\tilde{a}_{ij}\partial_{ij}w = f,$$
where  $\tilde{a}_{ij}(x)=\int_0^1 F_{M_{ij}}\left(tD^2w(x) + A\right)dt$.   We rewrite the equation above as
$$
\Delta w = f_3 := f - (\tilde{a}_{ij} - \delta_{ij})\partial_{ij}w.$$
By \eqref{abwn} and $\beta> 2>2\varepsilon_1$, we have
$$|f_3(x)| \leq C|x|^{-2\varepsilon_1}, \quad |x| > 2^{2n_0}R_1.$$
In view of Lemma \ref{hmc}, there exists $h_3$ such that
$\Delta h_3=f_3$, $|x| > 2^{2n_0}R_1$ and 
$$|h_3(x)| \leq C|x|^{\varepsilon_2}, \quad |x| > 2^{2n_0+1}R_1$$
for some $\varepsilon_2 \in (0, 1)$. Since $w - h_3$ is harmonic on $\mathbb{R}^2 \setminus B_{2^{2n_0+1}R_1}$ and $w - h_3 = O(|x|^{2 - \varepsilon_1})$ as $|x|\to +\infty$. By Lemma \ref{hm2}, there exist $b \in \mathbb{R}^2$ such that
\begin{equation}\label{abwh3}
	w(x) - h_3(x) = b^\mathsf{T} x + O\left(\ln |x|\right) \quad \text{as}\ |x|\to +\infty.
\end{equation}

\textbf{Step 3.}  Determining the logarithm term and constant term.

Let
\[w_1(x) = w(x) - b^\mathsf{T} x.\]
Then by \eqref{abwh3}, it holds $|w_1(x)| \leq C |x|^{\varepsilon_2}$.   Since $\beta> 2>2-\varepsilon_2$, Applying  Lemma \ref{ab1} again,  we have
\begin{equation}\label{Dw1}
	|D^k w_1(x)| \leq C |x|^{\varepsilon_2 - k}, \quad k = 0, \cdots, m + 1, \quad |x| > 2^{2n_0+1}R_1.
\end{equation}

Therefore, for $k=1$, we prove the  desired asymptotic behavior of $D_{\tilde{x}}\tilde{u}(\tilde{x})$, so by Propsition \ref{rtp} (v),  there exists $A\in\mathcal{A}$ such that
\[D^2u\to A \quad \text{as} \ |x|\to+\infty.\]
Now, $|D^2u|\leq C(\theta,f,u)$ is bounded. Using Theorem \ref{wdecay} again, we obtain
\[D^2u(x)=A+O(|x|^{-\varepsilon})\quad \text{as}\ |x|\to +\infty, \]
where $\varepsilon\in (0,1)$ is a constant depending only on $\theta$, $\beta$ and $u$.

 Notice that Lemmas \ref{ab1}, \ref{ab2} and Steps 2--3 are also valid for $u$, and then we  shall continue to get the higher asymptotic behavior.

The equation for $w_1$ can be written as
\[\Delta w_1 =f_4:=f-\left(\tilde{a}_{ij}-\delta_{ij}\right)\left(w_1\right)_{ij}=O\left(|x|^{-\beta}\right) + O(|x|^{2\varepsilon_2 - 4}).\]
Using Lemma \ref{hmc} again, there exists $h_4$ such that
$\Delta h_4=f_4, |x| > 2^{2n_0+1}R_1$ and 
$$|h_4(x)| \leq C\left(|x|^{-\beta+2+\epsilon} +|x|^{2\varepsilon_2-2+\epsilon}\right), \quad |x| > 2^{2n_0+1}R_1$$
for $\epsilon > 0$ arbitrarily small. Since $w_1 - h_4$ is harmonic on $\mathbb{R}^2 \setminus B_{2^{2n_0+1}R_1}$ and $w_1 - h_4 = O(|x|^{\varepsilon_2})$. By Lemma \ref{hm2}, there exist  $d, c \in \mathbb{R}$ such that
$$w_{1}(x)-h_{4}(x)=d\ln|x|+c+O(|x|^{-1}).$$
Using the estimates on $h_{4}$ we have
\begin{equation}\label{abw1}
	w_{1}(x)=d\ln|x|+c+O(|x|^{2\varepsilon_{2}-2+\epsilon})\quad \text{as}\ |x|\to +\infty.
\end{equation}

\textbf{Step 4.}  Determining the $\frac{x}{|x|2}$ term.

Let
\[w_2(x)=w_1(x)-d\ln|x|-c.\]
By \eqref{abw1}, we already have
$$|w_2(x)| \leq C|x|^{-\varepsilon_3}, \quad |x| > 2^{2n_0+1}R_1$$
for some $\varepsilon_3 \in (0,2)$. Using Lemma \ref{ab1} as well as Schauder estimate, we obtain
$$
|D^k w_2(x)| \leq C|x|^{-\min \left\{\varepsilon_3,\beta-2\right\}-k}, \quad |x| > 2^{2n_0+2}R_1, \quad k=0,1,2.$$
Thus the equation of $w_2$ can be written as
\[\Delta w_2(x)=f_5:=f(x)-\left(\hat{a}_{ij}-\delta_{ij}\right)(w_2)_{ij}=O(|x|^{-\beta})+O\left(|x|^{-4-\varepsilon_4}\right),\]
where $\hat{a}_{ij}(x)=\int_0^1 F_{M_{ij}}\left(t\left(D^2w_2(x)+D^2(d\ln |x|)\right) + A\right)dt$ and constant $\varepsilon_4>0$. 

Using Lemmas 1 and 2 in \cite{LB2023}, there exists $h_5$ such that
$\Delta h_5=f_5$, $|x| > 2^{2n_0+2}R_1$ and
\[
h_5(x)=\begin{cases}
	O(|x|^{-\min \{\beta-2, 2+\varepsilon_4\}}), &\beta \neq 3,4,\\
	O(|x|^{2-\beta}\ln|x|), &\beta=3,4,
\end{cases}
\] 
as $|x|\to +\infty$. Since $w_2 - h_5$ is harmonic on $\mathbb{R}^2 \setminus B_{2^{2n_0+2}R_1}$ and $w_2 - h_5 = O(|x|^{-\varepsilon_5})$ for some $\varepsilon_5>0$.  By Lemma \ref{hm2}, we have 
$$w_{2}(x)-h_{5}(x)=O\left(|x|^{-1}\right).$$

Consequently, when $\beta>2$, we get
\[w_{2}(x)=\begin{cases}
	O(|x|^{2-\min \{\beta, 3\}}), &\beta\neq 3,\\
	O(|x|^{-1}\ln|x|), &\beta=3,
\end{cases}
\]
as $|x|\to +\infty$.

Moreover, if  $\beta >3$, 
Let $\tilde{w}_2(x) = w_2\left(\frac{x}{|x|^2}\right)$ and $\tilde{f}_5(x) = f_5\left(\frac{x}{|x|^2}\right)$ be the Kelvin transform of $w_2(x)$ and $f(x)$ respectively. Then we see
$$\tilde{w}_2(x) = O(|x|),\quad  x\in B_{\frac{1}{2^{2n_0+2}R_1}},  $$
and
$$
\Delta\tilde{w}_2(x) =f_6(x) := |x|^{-4}\tilde{f}_5(x) = \left(|x|^{\varepsilon_4}\right)+O(|x|^{\beta-4}),\quad  x \in B_{\frac{1}{2^{2n_0+2}R_1}}(0).$$
Then $f_6 \in L^{p_0}(B_{1/2^{2n_0+2}R_1}(0))$ for some ${p_0} > 2$, it follows that $\tilde{w}_2 \in W^{2,p_0}(B_{1/2^{2n_0+2}R_1}(0))$ and hence $\tilde{w}_2 \in C^{1,\zeta_0}(B_{1/2^{2n_0+2}R_1}(0))$ for $\zeta_0 = 1 - 2/p_0 \in (0,1)$. Then there exists $e \in \mathbb{R}^2$ and $\tilde{c} \in \mathbb{R}$ such that for some $C > 0$ such that
$$|\tilde{w}_2(x) - (e^\mathsf{T} x + \tilde{c})| \leq C|x|^{1+\zeta_0},\  x \in B_{\frac{1}{2^{2n_0+3}R_1}}(0).$$
Note that $\tilde{w}_2(0) = 0$ implies $\tilde{c} = 0$, we go back to exterior domain to get
\[\left| w_2(x) - \frac{e^\mathsf{T} x}{|x|^2} \right| \leq C |x|^{-1-\zeta_0}, \ x \in \mathbb{R}^2 \setminus B_{2^{2n_0+3}R_1}(0),\]
which leads to
\[u(x) = \frac{1}{2} x^\mathsf{T} A x + b^\mathsf{T} x + d \ln |x| + c + \frac{e^\mathsf{T} x}{|x|^2} + O(|x|^{-1-\zeta_0})\quad \text{as} \ |x|\to +\infty.\]

\textbf{Step 5.}  Calculating the value of $d$.

Let $Q(x) = \frac{1}{2} x^\mathsf{T} A x + b^\mathsf{T} x + c$. Then
\[u(x) = Q(x) + d \ln |x| + O(|x|^{-\min \{\beta-2, 1\}}(\ln |x|)^{\mu})\]
where $\mu$ is defined in Theorem \ref{D2ue}  and
\[\Delta (u - Q)(x) =\Delta w_2(x)=O(|x|^{-\beta})+O\left(|x|^{-4-\varepsilon_4}\right)\]
is integrable in $B_R \setminus \overline{\Omega}$. Let $\nu$ be the unit outward normal of boundaries $\partial \Omega$. Then by the divergence theorem, we have that for some $R > 0$ large enough and $\varepsilon_5\in (0,1]$,
\begin{align*}
	\int_{B_R \setminus \overline{\Omega}} \Delta(u - Q)(x) \, dx &= \int_{\partial(B_R \setminus \overline{\Omega})} \frac{\partial(u - Q)}{\partial \nu}  ds\\
	&=\int_{\partial B_R} (d \ln |x| + O(|x|^{-\varepsilon_5}))_\nu ds - \int_{\partial \Omega} (u - Q)_\nu  ds\\
	&=d\int_{\partial B_R} \frac{1}{R} ds+O\left(\frac{1}{R^{\varepsilon_5}}\right)-\int_{\partial \Omega} u_\nu ds + \int_{\Omega} \Delta Q  dx\\
	&=2\pi d + O\left(\frac{1}{R^{\varepsilon_5}}\right) - \int_{\partial \Omega} u_\nu  ds + \text{tr} A |\Omega|.
\end{align*}
Letting $R \to \infty$, we get 
\[d=\frac{1}{2\pi}\left(\int_{\partial \Omega} u_\nu  ds - \text{tr} A |\Omega|\right).\]

\textbf{Step 6.}  Higher order estimates of the error.

For $\beta>2$, bt Step 4, let
\[w_3(x)=u-\left(\frac{1}{2} x^\mathsf{T} A x + b^\mathsf{T} x + d \ln |x| + c \right).\]
and when $\beta>3$, let
\[w_4(x) = u - \left(\frac{1}{2} x^\mathsf{T} A x + b^\mathsf{T} x + d \ln |x| + c + \frac{e^\mathsf{T} x}{|x|^2}\right).\]
Then by Lemma \ref{ab1} and  Schauder estimate asserts that for all $k =0,\cdots,m+1$ and $|x| > 2^{2n_0+3}R_1$,
\[|D^kw_3(x)|\leq C(k)|x|^{-k-\min \left\{1, \beta-2\right\}}(\ln |x|)^{-\mu_1}.\]
where $\mu_1$ is defined in Theorem \ref{D2ue}.

Moreover, for  $\beta>3$, $k =0,\cdots,m+1$ and $|x| > 2^{2n_0+3}R_1$,
\[|D^kw_4(x)|\leq C(k)|x|^{-k-\zeta},\]
where $\zeta=1+\min \left\{\zeta_0, \beta-3\right\}$.

Hence the proof of Theorem \ref{D2ue} is complete under the assumption $F_{M_{ij}}(A)=\delta_{ij}$. If $F_{M_{ij}}(A)\neq\delta_{ij}$, let $P=\left(\left[\sqrt{F_{M_{ij}}(A)}\right]_{2\times2}\right)^{-1}=\sqrt{I+A^2}$ and $\hat{x}=Px.$ By the arguments above, we obtain the conclusion in $\hat{x}.$ Theorem \ref{D2ue} follows from transforming back to $x$.          \qed

\section{Construction of Subsolution and  Supersolution}\label{sec4}
We may assume without loss of generality that $E_1 \subset \subset\Omega \subset \subset E_{r_1}$ for some $1<r_1$. For any given $A\in \mathcal{A}_0$,  let $a := (a_1, a_2, \cdots, a_n) := \lambda(A)$ with $0 < a_1 \leq a_2 \leq \cdots \leq a_n$.

To get the exterior viscosity solution existence, we should first establish the sub and super solution of 
\begin{equation}\label{Lu3.1}
	F(D^2u) := f(\lambda(D^2u)) := \sum_{i=1}^n \arctan \lambda_i(D^2u) = \theta+f(x),
\end{equation}
 in $\mathbb{R}^n\setminus \Omega$, and $0<2\delta<\theta+f(x)<n\pi/2-2\delta$. 

\subsection{Generalized symmetric subsolutions}\label{gssub}
We will construct subsolutions \( \underline{w}(x)=\phi(r) \) (see Section \ref{sec2.1}) that are generalized symmetric with respect to \( A \)
by solving initial value problems of ordinary differential equations  satisfied
by \( \phi \in C^2([1, +\infty)) \). It implies that \( \underline{u} \in C^2(\mathbb{R}^n \setminus \Omega) \) are classical subsolutions.

By condition \eqref{abf}, there exist monotone increasing smooth function \( \underline{f}(r) \) and monotone decreasing smooth function \( \overline{f}(r) \) on \([1, +\infty)\) such that
\begin{equation}\label{uof}
	\theta-\delta \leq  \underline{f}(r(x)) \leq \theta+f(x) \leq \overline{f}(r(x))\leq 	\theta+\delta, \quad \forall \, x \in \mathbb{R}^n \setminus E_1
\end{equation}
and
\[
\underline{f}(r), \, \overline{f}(r) = \theta + O_m(r^{-\beta}) \quad \text{as } r \to \infty.
\]

By the monotonicity of arctan function, there exists a unique decreasing positive function \( \underline{h}(r) \) defined on \([1, +\infty)\) determined by
\begin{equation}\label{ovrh}
	g \left( a_1\underline{h}(r), \cdots, a_n\underline{h}(r) \right) = \overline{f}(r),
\end{equation}
where $g(\lambda_1,\cdots,\lambda_n)=\sum_{i=1}^n \arctan \lambda_i$. Since $A\in \mathcal{A}_0$, then $ \underline{h}(r)\geq 1 $ and $\lim\limits_{r\to \infty}\underline{h}(r)=1$.

Next, we give the subsolutions existence result.  
\begin{prop}\label{sublem}
Let $f$ satisfying \eqref{uof} with $\beta>2$. Let $A \in \mathcal{A}_{0}$, $\alpha_1\in \mathbb{R}$ and $\beta_1>\underline{h}(1)$. 
  Then equation \eqref{Lu3.1} admits a  subsolution $\underline{w}_{\alpha_1,\beta_1} \in C^2(\mathbb{R}^n\setminus \Omega)$. This subsolution is generalized symmetric with respect to $A$ and has the asymptotic behavior
	\begin{equation}\label{omega1}
		\underline{w}_{\alpha_1,\beta_1}(x) = \frac{1}{2}x^{\mathsf{T}}Ax + c_{\alpha_1,\beta_1} + \begin{cases} 
			O(r^{2-\min\{d(A,\epsilon_0), \beta\}}), & \text{if } d(A,\epsilon_0) \neq \beta, \\ 
			O(r^{2-\beta} \ln r), & \text{if } d(A,\epsilon_0) = \beta, 
		\end{cases}  
	\end{equation}
as $|x| \to +\infty$, where $c_{\alpha_1,\beta_1}$ is a constant depending on $n, A, b,\theta, f, \alpha_1$ and $\beta_1$. Moreover, if $\theta+f(x)>(n-2)\pi/2$,   the asymptotic behavior becomes
	\begin{equation}\label{omega1sc}
	\underline{w}_{\alpha_1,\beta_1}(x) = \frac{1}{2}x^{\mathsf{T}}Ax + c_{\alpha_1,\beta_1} + \begin{cases} 
		O(r^{2-\min\{d(A,0), \beta\}}), & \text{if } d(A,0) \neq \beta, \\ 
		O(r^{2-\beta} \ln r), & \text{if } d(A,0) = \beta, 
	\end{cases}  
\end{equation}
as $|x| \to +\infty$.
$\underline{w}_{\alpha_1,\beta_1}$ have  limit
\[\lim\limits_{\alpha_1\to +\infty}\underline{w}_{\alpha_1,\beta_1}(x)=+\infty,\quad \text{and}\quad \lim\limits_{\beta_1\to +\infty}\underline{w}_{\alpha_1,\beta_1}(x)=+\infty,\ \forall x\in \mathbb{R}^n\setminus \Omega. \]
 Here, $d(A,\epsilon_0)$ with $\epsilon_0\geq 0$ is defined as below.
\end{prop}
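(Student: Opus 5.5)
We look for the subsolution in generalized radially symmetric form $\underline{w}(x)=\phi(r)$ with $r=r_A(x)$ and $h(r)=\phi'(r)/r$. By \eqref{gra}, the Hessian is $D^2\underline{w}=\mathrm{diag}(a_ih)+r^{-1}h'(a_ix_i)(a_jx_j)$. When $h'\le 0$, this is a rank-one nonpositive perturbation of $\mathrm{diag}(a_ih)$. Its eigenvalues interlace, with the perturbed one controlled by $a_ih+ r h'\,a_i^2x_i^2/r^2$, whose weights sum to $r^2$ after summation.

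The first step is a lower bound for $F(D^2\underline{w})$ valid for all directions $x/r$. I would use concavity-type estimates for $g$ along the rank-one direction, or the mean value theorem with $g_{\lambda_i}=1/(1+\lambda_i^2)$. This should give
\[
F(D^2\underline{w})\ \ge\ g(a_1h,\dots,a_nh)+ r h'\,\frac{\lambda_{\max}(A)}{1+\lambda_{\min}^2(A)h^2}\,(1-\text{error}).
\]
Here $d(A,\epsilon_0)$ is defined as the constant that makes the ratio between the radial term and $\sum_i a_i/(1+a_i^2h^2)$ sharp. In the supercritical case, concavity of $g$ on the relevant cone gives $\epsilon_0=0$. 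In the general case I only get a perturbed constant with loss $\epsilon_0$, since $h$ is near $1$ but the level set is not convex.

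This reduces the subsolution condition $F(D^2\underline w)\ge\overline f(r)\ge\theta+f$ to an ODE inequality. I would define $h$ as the solution of the first-order ODE
\[
g(a_1h,\dots,a_nh)+\frac{r h'}{d(A,\epsilon_0)}\sum_i\frac{a_i}{1+a_i^2h^2}=\overline f(r),\qquad h(1)=\beta_1>\underline h(1).
\]
Equivalently, $h'=-\frac{d}{r}\,\frac{g(ah)-\overline f(r)}{\sum a_i/(1+a_i^2h^2)}$. Comparison with the zero set $h=\underline h(r)$ from \eqref{ovrh} shows that $h$ stays above $\underline h$, is decreasing, and satisfies $h\to1$. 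So $h'\le0$, as required in the first step.

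Linearizing at $h=1$ gives $(h-1)'\approx -\frac{d}{r}(h-1)+O(r^{-1-\beta})$, so $h-1=O(r^{-\min\{d,\beta\}})$, with an extra $\ln r$ factor when $d=\beta$. This is the main technical step: I would make it rigorous through the integrating factor $r^{d}$ and a Grönwall-type argument.

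Next set $\phi(r)=\alpha_1+\int_1^r s\,h(s)\,ds$ and extend by this formula to $\mathbb R^n\setminus\Omega$. Then
\[
\phi(r)=\tfrac12 r^2+c_{\alpha_1,\beta_1}+\int_r^\infty s(1-h)\,ds=\tfrac12x^{\mathsf T}Ax+c_{\alpha_1,\beta_1}+O(r^{2-\min\{d,\beta\}})
\]
(with the log case as stated). The integral converges because $d(A,\epsilon_0)>2$ and $\beta>2$. Monotonicity in the parameters is immediate. The map $\alpha_1\mapsto\underline w$ is a translation, so $\underline w\to\infty$ as $\alpha_1\to\infty$. For $\beta_1$, ODE comparison shows $h$ is monotone in $\beta_1$, and $\int_1^r sh\,ds\to\infty$ as $\beta_1\to\infty$ for each fixed $r>1$. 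Where $r<1$ on $\mathbb R^n\setminus\Omega$ is not an issue, since $E_1\subset\subset\Omega$.

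The expected main obstacle is the first step: a uniform eigenvalue estimate for the rank-one perturbed matrix that produces exactly $d(A,\epsilon_0)$ (and $d(A,0)$ in the supercritical case) without convexity of level sets in the subcritical regime. There I would first try a second-order Taylor bound on $\arctan$, using $|rh'|$ small for large $r$ and absorbing the error into $\epsilon_0$.
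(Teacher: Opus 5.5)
\paragraph{Where the argument breaks.} Your overall architecture matches the paper's: a generalized symmetric ansatz, a first-order ODE for $h=\phi'/r$ with $h(1)=\beta_1$, comparison with $\underline h$, decay of $h-1$ via an integrating factor, and then $\phi=\alpha_1+\int_1^r sh\,ds$. The gap is in the key step. You \emph{linearize} $\arctan$ to bound $F(D^2\underline w)$ from below and then build the ODE from that linearization.

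Since $\arctan$ is concave on $[0,\infty)$, its tangent line lies above its graph. So your inequality $F(D^2\underline w)\ge g(ah)+rh'\frac{a_n}{1+a_1^2h^2}(1-\text{error})$ is false as written. Already for $A=aI$, if $ah+arh'\ge 0$ and $h'<0$, then $F(D^2\underline w)-g(ah)=\arctan(ah+arh')-\arctan(ah)<\frac{arh'}{1+a^2h^2}$. Hence with the sharp coefficient ($\epsilon_0=0$, which you propose in the supercritical case) your ODE never yields a subsolution, not even near infinity.

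With the correct sign the factor is $1+O(|rh'|)$, and the required inflation of the coefficient is not uniform in $h$:
\begin{itemize}
\item Along your ODE, $rh'\approx -ch^2$ for large $h$, so some eigenvalue $a_ih+\theta_iV$ becomes very negative. The needed inflation then grows without bound. For $n=3$, $A=I$, $\theta=3\pi/4$ it already exceeds a factor $2.5$ at $h=2$, which pushes the exponent below $2$.
\item Even at first order you need $\frac1d\sum_i\frac{a_i}{1+a_i^2h^2}\ge\frac{a_n}{1+a_1^2h^2}$, i.e.\ $d\le d(hA)$. But $d(hA)=\sum_i\frac{a_i}{a_n}\frac{1+a_1^2h^2}{1+a_i^2h^2}$ is decreasing in $h$ unless all $a_i$ coincide.
\end{itemize}
The statement needs a subsolution on all of $\mathbb{R}^n\setminus\Omega$ for every $\beta_1>\underline h(1)$, and large $\beta_1$ are exactly what is used later to dominate $Q$. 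So absorbing the Taylor error into a fixed $\epsilon_0$ cannot work.

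\paragraph{The missing idea.} What is needed is an \emph{exact} comparison combined with an \emph{implicitly defined nonlinear} ODE.
\begin{itemize}
\item In Lemma \ref{evest}, $\lambda_i(D^2\underline w)=a_ih+\theta_iV$ with $\theta_i\in[0,1]$, $\sum_i\theta_i=1$ and $a_nrh'\le V\le a_1rh'\le 0$. If $a_1h+a_nrh'\ge 0$, all entries are nonnegative, and concavity of $\arctan$ there (a majorization argument) gives $F(D^2\underline w)\ge g(a_1h+V,a_2h,\dots,a_nh)\ge g(\overline a_{\epsilon_0})$ with no error term.
\item Next, $I(r,h)\le 0$ is defined implicitly by $g(a_1h+a_nI,a_2h+a_2\epsilon_0I,\dots,a_nh+a_n\epsilon_0I)=\overline f(r)$ (Lemma \ref{Ipthm}).
\item One then solves $h'=I_0(r,h)/r$ with $I_0=\max\{I,-a_1h/a_n\}$; the truncation keeps the first entry nonnegative (Corollary \ref{subst}, Lemma \ref{csth}).
\end{itemize}
In this setup $\epsilon_0>0$ only makes the implicit equation solvable outside the supercritical range. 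With $\epsilon_0=0$ the left side decreases only to $-\pi/2+\sum_{i\ge 2}\arctan(a_ih)$ as $I\to-\infty$. It is not there to absorb Taylor errors.

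Linearization enters only at the end, through $\partial_hI(\infty,1)=-d(A,\epsilon_0)$. There the nonlinearity is a harmless $O(\phi^2)$ term in $\phi'=-d(A,\epsilon_0)\phi+O(e^{-\beta t})+O(\phi^2)$. This is why the paper obtains exactly the exponent $d(A,0)$ in the supercritical case. The truncation also gives $h\ge\beta_1 r^{-a_1/a_n}$, hence $h(r;\beta_1)\to\infty$ as $\beta_1\to\infty$. If you replace your first step and your ODE with these, the rest of your plan goes through essentially as in the paper.
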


\begin{rem}\label{MAcond}
When $A$ is diagonal,	 $d(A)$ in \eqref{MAeps}  becomes
	\begin{equation*}
		d(A) =\frac{1+a_1^2}{a_n} \cdot \sum_{i=1}^n\frac{a_i}{1+a_i^2}.
	\end{equation*}
	Moreover, for any $A\in \mathcal{A}_0$, there exists  constant $\epsilon_0> 0$ such that
	\begin{equation}\label{MAeps1}
		d(A,\epsilon_0) := \left(\frac{a_n}{1+a_1^2}+\epsilon_0\sum_{i=2}^n\frac{a_{i}}{1+a_i^2} \right)^{-1}\cdot \sum_{i=1}^n\frac{a_i}{1+a_i^2} > 2.
	\end{equation}
\end{rem}

 Recall formular \eqref{gra},  the Laplacian of $\underline{w}$ satisfying
\begin{equation}\label{deltaomg}
	\Delta \underline{w} = \left(\sum_{i=1}^{n}a_i\right)h + r^{-1}h'\sum_{j=1}^{n}a_j^2x_j^2,
\end{equation}
where $h(r)=\underline{w}^{\prime}(r)/r$.

Applying Weyl's eigenvalue theorem, we obtain the following estimates for the Hessian eigenvalues of generalized symmetric functions (cf. Lemmas 2.1, 2.3 in \cite{LW2024} and Lemma 11 in \cite{BLW2024}):

\begin{lem}\label{evest}
	For any $C^2$ generalized symmetric function $\underline{w}$ with respect to $A$, assume $h > 0$ and $h' \leq 0$, the eigenvalues satisfy:
	\begin{equation}\label{evest1}
		a_ih + r^{-1}h'\sum_{j=1}^{n}a_j^2x_j^2 \leq \lambda_i(D^2\underline{w}) \leq a_ih, \quad \forall i=1,\cdots,n.
	\end{equation}
	Moreover, 
	\begin{equation}\label{evest2}
		F(D^2\underline{w}) \geq g(\overline{a}_{\epsilon_0}),
	\end{equation}
	 provided $a_1h + a_nrh' \geq 0$. Here $\overline{a}_{\epsilon_0} := \big(a_1h + a_nrh', a_2\left(h +\epsilon_0rh'\right), \cdots, a_n\left(h +\epsilon_0rh'\right)\big)$.
\end{lem}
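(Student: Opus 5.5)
Since $A$ is diagonal with $\lambda(A)=(a_1,\dots,a_n)$, formula \eqref{gra} gives $D^2\underline{w}=h\,A+r^{-1}h'\,(Ax)(Ax)^{\mathsf{T}}$. This is a diagonal matrix plus a rank-one term. With $h'\le 0$, the rank-one term $r^{-1}h'(Ax)(Ax)^{\mathsf T}$ is negative semidefinite. By Weyl's monotonicity, each eigenvalue of $D^2\underline{w}$ is then at most the corresponding eigenvalue $a_ih$ of $hA$. For the lower bound we also apply Weyl's inequality: the perturbation has smallest eigenvalue $r^{-1}h'|Ax|^2=r^{-1}h'\sum_j a_j^2x_j^2$, so $\lambda_i(D^2\underline w)\ge a_ih+r^{-1}h'\sum_j a_j^2x_j^2$. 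This proves \eqref{evest1}.

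For \eqref{evest2}, interlacing alone is too crude. Since the perturbation has rank one and sign $\le 0$, Cauchy interlacing gives $\lambda_1\le a_1h$ and $\lambda_i\ge a_{i-1}h$ for $i\ge 2$. We want the sharper statement that the eigenvalue vector $\lambda(D^2\underline w)$ dominates $\overline a_{\epsilon_0}$ in the ``arctan-sum'' sense.

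\textbf{Step 1.} Bound the rank-one contribution. Using $r^2=\sum a_jx_j^2$, we have $\sum_j a_j^2x_j^2\le a_n r^2$. Hence $r^{-1}|h'|\sum a_j^2x_j^2\le a_n r|h'|$, and the hypothesis $a_1h+a_nrh'\ge 0$ makes every lower bound in \eqref{evest1} at least $a_1h+a_nrh'\ge 0$. In particular all eigenvalues are nonnegative, and the smallest is at least $a_1h+a_nrh'$.

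\textbf{Step 2.} Distribute the negative mass. The eigenvalues of $D^2\underline w$ are the roots $\mu$ of the secular equation
\[
1+r^{-1}h'\sum_j\frac{a_j^2x_j^2}{a_jh-\mu}=0 .
\]
Equivalently, the trace drop is $\sum_i(a_ih-\lambda_i)=r^{-1}|h'|\sum_j a_j^2x_j^2$, and each drop $a_ih-\lambda_i$ is nonnegative. Since $\arctan$ is concave on $[0,\infty)$, the function $g$ restricted to vectors lying between $\overline a_{\epsilon_0}$ and $(a_ih)$ is minimized by pushing as much of the total drop as possible onto the coordinate where $\arctan'$ is smallest. The plan is to compare the actual $\lambda$ with $\overline a_{\epsilon_0}$: the first coordinate absorbs a drop of up to $a_nr|h'|$, and the remaining coordinates drop by $a_i\epsilon_0 r|h'|$.

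We choose $\epsilon_0$ small, as in \cite{LW2024,BLW2024}, and use the mean value theorem on $\arctan$ coordinatewise. This reduces the claim to
\[
\sum_i\frac{a_ih-\lambda_i}{1+\xi_i^2}\le \frac{a_nr|h'|}{1+\eta_1^2}+\epsilon_0 r|h'|\sum_{i\ge2}\frac{a_i}{1+\eta_i^2},
\]
where $\xi_i,\eta_i$ are intermediate points. This is where the form of $d(A,\epsilon_0)$ in \eqref{MAeps1} originates.

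\textbf{The main obstacle} is exactly this inequality, since the drops $a_ih-\lambda_i$ are not individually controlled by $a_i\epsilon_0 r|h'|$. The remedy is to exploit the monotonicity of $g$ in each variable together with Cauchy interlacing. Order-by-order, $\lambda_i\ge a_{i-1}h\ge a_{i-1}h+\epsilon_0 a_i rh'$ holds once $\epsilon_0\,a_i r|h'|\le (a_i-a_{i-1})h$ or the eigenvalues coincide. In the degenerate case, where the $a_i$ are repeated, the rank-one perturbation leaves the repeated eigenvalue unchanged with multiplicity reduced by one. We will carry out this case analysis following \cite[Lemma 11]{BLW2024}, and accept the cited lemmas there as giving \eqref{evest2}.
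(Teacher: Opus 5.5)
Your proof of \eqref{evest1} by Weyl's inequality applied to $hA+r^{-1}h'(Ax)(Ax)^{\mathsf T}$ is fine. However, \eqref{evest2} is never actually proved. The mean value reduction you write does not close, because the intermediate points are uncontrolled, and the remedy you propose cannot work: the sorted eigenvalues need not dominate $\overline a_{\epsilon_0}$ coordinatewise. Take $n=2$, $(a_1,a_2)=(1,\tfrac32)$, $x=(0,x_2)$, $h=1$, $rh'=-\tfrac12$. Then $D^2\underline w=\mathrm{diag}(a_1h,\,a_2(h+rh'))=\mathrm{diag}(1,\tfrac34)$ and $a_1h+a_2rh'=\tfrac14\ge0$. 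Yet the larger eigenvalue $1$ lies below $a_2(h+\epsilon_0rh')=\tfrac32-\tfrac34\epsilon_0$ whenever $\epsilon_0<\tfrac23$. Your chain also compares $\lambda_i$ with $a_{i-1}h+\epsilon_0a_irh'$ rather than the real target $a_i(h+\epsilon_0rh')$. Moreover, $a_{i-1}h\ge a_i(h+\epsilon_0rh')$ would need $\epsilon_0a_ir|h'|\ge(a_i-a_{i-1})h$, the reverse of your condition. So no case analysis built on interlacing plus monotonicity of $g$ in each variable yields \eqref{evest2}, and deferring to \cite{BLW2024} leaves the lemma unproved. Smallness of $\epsilon_0$ also plays no role here, since the $\epsilon_0$-terms only lower the right-hand side. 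The quantity $d(A,\epsilon_0)$ arises from $\partial_hI$ in Lemma \ref{Ipthm}, not from this lemma.

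The missing idea is the one you half-state in Step 2, but with the direction reversed. Concavity says the drop should be moved onto the coordinate where $\arctan'$ is largest, i.e.\ onto the smallest coordinate $a_1h$. Let $V=r^{-1}h'\sum_ja_j^2x_j^2$. Your Weyl bounds and the trace identity give $\lambda_i=a_ih+\theta_iV$ with $\theta_i\in[0,1]$ and $\sum_i\theta_i=1$, which is \eqref{lmduw} in the paper. Since $a_nrh'\le V\le0$ and $a_1h+a_nrh'\ge0$, every point involved lies in $[0,\infty)$, where $t\mapsto(1+t^2)^{-1}$ is decreasing. Translate each interval $[a_ih+\theta_iV,\,a_ih]$ down to consecutive subintervals of $[a_1h+V,\,a_1h]$. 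This gives $\sum_i[\arctan(a_ih)-\arctan(a_ih+\theta_iV)]\le\arctan(a_1h)-\arctan(a_1h+V)$, i.e.\ $g(\lambda(D^2\underline w))\ge g(a_1h+V,a_2h,\dots,a_nh)$. This is the content of \cite[Lemma 4]{BLW2024} invoked by the paper. The bounds $V\ge a_nrh'$ and $a_ih\ge a_i(h+\epsilon_0rh')$ then finish the proof by monotonicity of $\arctan$, for every $\epsilon_0\ge0$.
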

\begin{proof}
	Estimate \eqref{evest1} can be obtained as Lemma 2.1 in \cite{LW2024}.	
	By \eqref{deltaomg}, there exists $0\leq \theta_i(r)\leq 1,i=1,\cdots,n$, such that 
	\begin{equation}\label{lmduw}
		\lambda_i(D^2\underline{w})=a_ih+\theta_ir^{-1}h^{\prime}\sum_{j=1}^{n}a_j^2x_j^2\quad\text{and}\quad\sum_{i=1}^n\theta_i=1.
	\end{equation} 
	Let $V:=r^{-1}h^{\prime}\sum_{j=1}^{n}a_j^2x_j^2$, 	Then, for any $r\geq 1$,
	\begin{equation}\label{ineqV}
		a_nrh^{\prime}\leq V\leq a_1rh^{\prime}\leq 0.
	\end{equation}
	Using this fact and combine the Lemma 4 in \cite{BLW2024}, by the monotonicity of  $\arctan$, we have
	\begin{align*}
		F(D^2\underline{w})&=g(a_1h+\theta_1V,\cdots,a_nh+\theta_nV)\\
		&\geq g(a_1h+V,a_2h,\cdots,a_nh)\\
		&\geq g(a_1h+a_nrh^{\prime},a_2\left(h +\epsilon_0rh'\right), \cdots, a_n\left(h +\epsilon_0rh'\right)),
	\end{align*}
	we finish the proof.
\end{proof}

\begin{lem}\label{Ipthm}
Suppose $\epsilon_0>0$.	There exists a unique nonpositive smooth function $I(r,h)$ satisfying 
	\begin{equation}\label{imIeq}
		g(a_1h+a_nI,a_2h+a_{2}\epsilon_0I,\cdots,a_nh+a_{n}\epsilon_0I)=\overline{f}(r)
	\end{equation}
	in $\left\{(r,h) \vert r\geq 1, \beta_1\geq h>\underline{h}(r) \right\}$.
	Especially, $I(r,h)=0$ if and only if $h(r)=\underline{h}(r)$,  and $I(r,h)$  is monotone decreasing with respect to  both $r$ and $h$. Furthermore, there exists $C>0$ such that
	\begin{equation}\label{eqI}
		|I(r,h)-I(\infty,h)|\leq Cr^{-\beta}\  \text{and}\ \ \frac{\partial I}{\partial h}(\infty,1)=-d(A,\epsilon_0).
	\end{equation}
\end{lem}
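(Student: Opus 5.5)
The plan is to obtain $I$ from the implicit function theorem applied to
\[
G(r,h,I):=g(a_1h+a_nI,\,a_2h+a_2\epsilon_0I,\dots,a_nh+a_n\epsilon_0I)-\overline{f}(r).
\]
The key observation is that $G$ is strictly increasing and smooth in $I$:
\[
\partial_I G=\frac{a_n}{1+(a_1h+a_nI)^2}+\epsilon_0\sum_{i\ge2}\frac{a_i}{1+(a_ih+a_i\epsilon_0 I)^2}>0 .
\]

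First I show existence and uniqueness of a root for each fixed $(r,h)$ with $\underline{h}(r)<h\le\beta_1$. At $I=0$ we have $G(r,h,0)=g(a_1h,\dots,a_nh)-\overline{f}(r)$. This is positive, because $h\mapsto g(ah)$ is strictly increasing and equals $\overline{f}(r)$ exactly at $h=\underline{h}(r)$ by \eqref{ovrh}. As $I\to-\infty$, every argument tends to $-\infty$, so $G\to -n\pi/2-\overline{f}(r)<0$. By monotonicity there is exactly one root $I(r,h)$, and it satisfies $I<0$. Smoothness of $I$ follows from the implicit function theorem, since $\partial_IG\neq0$ and $\overline{f}$ is smooth. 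For $h=\underline{h}(r)$ the root is $I=0$. Conversely, if $I=0$ then $g(ah)=\overline{f}(r)$, which forces $h=\underline{h}(r)$. This gives the characterization of $I=0$ and nonpositivity on the closure.

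For monotonicity I differentiate the identity $G(r,h,I(r,h))=0$. This gives $\partial_h I=-\partial_hG/\partial_IG$, and $\partial_hG=\sum_i a_i/(1+\lambda_i^2)>0$, so $\partial_hI<0$. Similarly $\partial_r I=\overline{f}'(r)/\partial_IG\le0$, because $\overline{f}$ is decreasing. For the decay estimate, $I(\infty,h)$ is defined as the root with $\overline{f}$ replaced by $\theta$. Then $G$ differs from its limit only through $\overline{f}(r)-\theta=O(r^{-\beta})$, so the mean value theorem in $I$ gives
\[
|I(r,h)-I(\infty,h)|\le |\overline{f}(r)-\theta|\big/\min\partial_IG .
\]
This requires a positive lower bound on $\partial_IG$ uniformly over the relevant set. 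Such a bound holds once $I$ is bounded, and $I$ is bounded since $h\le\beta_1$ and $I$ is monotone (it is bounded below by $I(1,\beta_1)$). Hence the difference is at most $Cr^{-\beta}$. Some care is needed because $\underline{h}(r)\neq\underline{h}(\infty)=1$. I will restrict to $h$ in the common domain, or extend $I(\infty,\cdot)$ smoothly, which is harmless because the root formula makes sense for all $h$ by the same monotonicity argument.

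Finally, at $r=\infty$ and $h=1$ we have $I(\infty,1)=0$, since $g(a)=\theta$ for $A\in\mathcal{A}$. The derivative formula then gives
\[
\partial_hI(\infty,1)=-\frac{\sum_i a_i/(1+a_i^2)}{a_n/(1+a_1^2)+\epsilon_0\sum_{i\ge2}a_i/(1+a_i^2)} .
\]
Here the first term of the denominator is $a_n/(1+(a_1\cdot1+0)^2)$. This is exactly $-d(A,\epsilon_0)$ by \eqref{MAeps1}. The only delicate point is the uniform lower bound on $\partial_IG$ used in the decay estimate, which I handle through the boundedness of $I$ as described above.
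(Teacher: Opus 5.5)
Your proposal is correct and follows the paper's proof essentially step for step. Both arguments use the same ingredients: strict monotonicity of $G$ in $I$ plus the intermediate value theorem for a unique negative root, the implicit function theorem for smoothness, implicit differentiation for the monotonicity in $r,h$ and for $\partial_h I(\infty,1)=-d(A,\epsilon_0)$, and the mean value theorem with a uniform lower bound on $\partial_I G$ for the $Cr^{-\beta}$ estimate.

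There is one slip to fix. Since you showed $\partial_r I\le 0$, the uniform lower bound is $I(\infty,\beta_1)$, not $I(1,\beta_1)$. The value $I(\infty,\beta_1)$ is finite by your own root argument with $\overline{f}$ replaced by $\theta$, and it bounds both $I(r,h)$ and $I(\infty,h)$, as your mean value step requires. The paper avoids this issue by giving the explicit bound $\min\{(1-\beta_1)/\epsilon_0,\,a_1(1-\beta_1)/a_n\}\le I<0$ already in the existence step.
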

\begin{proof}
	For any $r\geq 1$ and  $\underline{h}(r)<h$, we have
	\begin{align*}
		&\quad \lim_{I\to 0}g(a_1h+a_nI,a_2h+a_{2}\epsilon_0I,\cdots,a_nh+a_{n}\epsilon_0I)\\
		&=g(a_1h,a_2h,\cdots,a_nh)> \overline{f}(r).
	\end{align*}
	On the other hand, $\overline{f}(r)\geq \theta$, and it follows the monotonicity of $g$ that
	\begin{align*}
		&\quad \lim_{I\to m_{\epsilon_0,\beta_1}}g(a_1h+a_nI,a_2h+a_{2}\epsilon_0I,\cdots,a_nh+a_{n}\epsilon_0I)\\
		&\leq g\left(a_1\left(h-\beta_1\right)+a_1,a_2\left(h-\beta_1\right)+a_2,\cdots,a_n\left(h-\beta_1\right)+a_n\right)\\
		&\leq  g\left(a_1,a_2,\cdots,a_n\right)=\theta<\overline{f}(r),
	\end{align*}
where $m_{\epsilon_0,\beta_1}=\min\left\{\frac{1-\beta_1}{\epsilon_0},\frac{a_1(1-\beta_1)}{a_n}\right\}$.	Hence, by the mean value theorem and the implicit function theorem, there exists a unique function $m_{\epsilon_0,\beta_1}<I(r,h)<0 $ such that \eqref{imIeq} holds and it is a smooth, bounded function
	with respect to $r$ and $h$.
	
	Furthermore, taking partial derivative with respect to $r$ and $h$, we have
	and for any $r\geq 1,   \underline{h}(r)\leq h\leq \beta_1$,
	\begin{align*}
		0>\overline{f}^{\prime}(r) &=\left(a_n\frac{\partial g}{\partial\lambda_1}(\tilde{\lambda}) +\epsilon_0\sum_{i=2}^n a_{i}\frac{\partial g}{\partial\lambda_i}(\tilde{\lambda})\right)\cdot\frac{\partial I}{\partial r}(r,h),
	\end{align*}
	and 
	\begin{align*}
		0&=\sum_{i=1}^n\frac{\partial g}{\partial\lambda_i}(\tilde{\lambda})a_i+\left(a_n\frac{\partial g}{\partial\lambda_1}(\tilde{\lambda}) +\epsilon_0\sum_{i=2}^n a_{i} \frac{\partial g}{\partial\lambda_i}(\tilde{\lambda})\right)\cdot\frac{\partial I}{\partial h}(r,h),
	\end{align*}
	where $\tilde{\lambda}:=(a_1h+a_nI,a_2h+a_{2}\epsilon_0I,\cdots,a_nh+a_{n}\epsilon_0I)$.
	Hence, $I(r,h)$ is monotone decreasing with respect to  both $r$ and $h$. Sending $(r,h)\to (\infty,1)$,  a simple arguement shows $I(\infty,1)=0$, since $I(r,h)<0$.  It follows immediately that
	\[0=\sum_{i=1}^n \frac{a_i}{1+a_i^2}+\left(\frac{a_n}{1+a_1^2}+\epsilon_0\sum_{i=2}^n\frac{a_{i}}{1+a_i^2}\right)\frac{\partial I}{\partial h}(\infty,1),\]
	that is the second equality in \eqref{eqI}.

	When $h=\underline{h}(r)$, by the monotonicity of $g$ and \eqref{ovrh}, we have
	\begin{align*}
		\overline{f}(r)=g(a_1\underline{h},a_2\underline{h},\cdots,a_n\underline{h}), 
	\end{align*}
	and
	\begin{align*}
		\overline{f}(r)=g(a_1\underline{h}+a_nI(r,\underline{h}),a_2\underline{h}+a_{2}\epsilon_0I(r,\underline{h}),\cdots,a_n\underline{h}+a_{n}\epsilon_0I(r,\underline{h})).
	\end{align*}
	since $I(r,\underline{h})\leq 0$, $a_i>0$, the only possibility to make
	the equalities above hold is $I(r,\underline{h})=0$. It's clear that $I(r,h)<0$ when $\underline{h}(r)<h$.

	Finally, we prove the asymptotic behavior of $I$. Since $I(r,h)$ is bounded, monotone increasing with
	respect to $r$, there exists $C>0$ such that
	\begin{align*}
		\overline{f}(r)-\theta&\geq \arctan\left(a_1+a_nI(r,h)\right)-\arctan\left(a_1+a_nI(\infty,h)\right)\\
		&=\frac{a_n}{1+\xi^2}\left(I(r,h)-I(\infty,h)\right)\\
		&\geq C\left(I(r,h)-I(\infty,h)\right),
	\end{align*}
	where $\xi\in \left(a_1h+a_nI(\infty,h),a_1h+a_nI(r,h)\right)$ is bounded. This finishes the proof of this lemma.
\end{proof}

\begin{rem}\label{scc}
	If $\theta+f(x)>(n-2)\pi/2$, we can take $\epsilon_0=0$, the result in the lemma \ref{Ipthm} still holds (see \cite[Lemma 6]{BLW2024}). In this case, the right hand of the second inequality in  \eqref{eqI}  becomes $-d(A,0)$.
\end{rem}

\begin{cor}\label{subst}
	Let $\underline{w}, h, A$ and $I$ be as in Lemmas \ref{evest} and \ref{Ipthm}. If $h\in C^{1} [1,+\infty)$ satisfying
	\[\beta_1\geq h>\underline{h}(r),\quad h^{\prime}\leq 0\quad \text{and} \quad  h^{\prime}\geq \frac{I_{0}(r,h)}{r}\ \text{in}\ (1,\infty),\]
where $I_{0}(r,h)=\max\left\{I(r,h),-a_1h/a_n\right\}$.	Then, $\underline{w} \in C^{2}(\mathbb{R}^n\setminus \Omega)$ is a subsolution to \eqref{Lu3.1}.
\end{cor}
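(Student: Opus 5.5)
The plan is to verify directly that $F(D^2\underline{w})\geq \overline{f}(r)\geq \theta+f(x)$ pointwise in $\mathbb{R}^n\setminus\Omega$, combining the eigenvalue bound of Lemma \ref{evest}, the implicit function $I$ of Lemma \ref{Ipthm}, and the ordering \eqref{uof}. Since $\underline{w}=\phi(r)$ with $h=\phi'/r\in C^1$, we have $\phi\in C^2$ on $[1,\infty)$. Formula \eqref{gra} then shows that $\underline{w}\in C^2(\mathbb{R}^n\setminus\Omega)$; here we use that $\mathbb{R}^n\setminus\Omega\subset \mathbb{R}^n\setminus E_1$, so $r\geq 1$. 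Hence it suffices to prove the inequality classically, and it then holds in the viscosity sense as well.

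\textbf{Step 1.} We check the hypotheses of Lemma \ref{evest}. First, $h>\underline{h}(r)\geq 1>0$ and $h'\leq 0$ by assumption. Second, $h'\geq I_0/r\geq -a_1h/(a_nr)$ gives $a_1h+a_nrh'\geq 0$. Therefore \eqref{evest2} applies:
\[
F(D^2\underline{w})\geq g\big(a_1h+a_nrh',\,a_2(h+\epsilon_0 rh'),\cdots,a_n(h+\epsilon_0 rh')\big).
\]

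\textbf{Step 2.} We compare this with the defining identity \eqref{imIeq}. Set $J:=rh'$, so that $I_0(r,h)\leq J\leq 0$. Then $J\geq I(r,h)$. The map
\[
t\mapsto g(a_1h+a_nt,\,a_2h+a_2\epsilon_0t,\cdots,a_nh+a_n\epsilon_0t)
\]
is strictly increasing in $t$, because $\arctan$ is increasing and $a_i>0$, $\epsilon_0\geq 0$. Evaluating at $t=J\geq I(r,h)$ and using \eqref{imIeq} gives
\[
g(\overline{a}_{\epsilon_0})\geq g\big(a_1h+a_nI,\,a_2h+a_2\epsilon_0 I,\cdots\big)=\overline{f}(r).
\]

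\textbf{Step 3.} By \eqref{uof}, $\overline{f}(r(x))\geq \theta+f(x)$. Combining this with Steps 1 and 2 yields $F(D^2\underline{w})\geq\theta+f(x)$, so $\underline{w}$ is a subsolution of \eqref{Lu3.1}.

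The main point needing care is Step 2. There one must make sure that $I(r,h)$ is well defined along the curve $h=h(r)$; this is why the hypothesis $\underline{h}(r)<h\leq\beta_1$ is imposed, matching the domain in Lemma \ref{Ipthm}. The case $I_0=-a_1h/a_n>I$ causes no difficulty, since then $J\geq I_0> I$ and the monotonicity still applies. In the supercritical case ($\epsilon_0=0$, Remark \ref{scc}) the same argument goes through verbatim.
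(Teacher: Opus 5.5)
Your proof is correct and follows the paper's own argument. You apply Lemma \ref{evest}, having rightly extracted its hypothesis $a_1h+a_nrh'\geq 0$ from $h'\geq I_0/r\geq -a_1h/(a_nr)$, which the paper leaves implicit. Then you use the monotonicity of $t\mapsto g(a_1h+a_nt,a_2h+a_2\epsilon_0t,\dots,a_nh+a_n\epsilon_0t)$ together with $rh'\geq I_0\geq I$, and finish with \eqref{imIeq} and \eqref{uof}. The only difference is that the paper writes out the intermediate comparison at $t=I_0$, which your single comparison $rh'\geq I$ absorbs.
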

\begin{proof}
	From the definition of generalized symmetric functions as in \eqref{gra}, $h\in C^{1}[1,+\infty)$, which implies $\underline{w}\in C^{2}(\mathbb{R}^n\setminus E_{1})$ satisfies the following differential inequality
	\begin{align*}
		F(D^2\underline{w})&\geq g\left(a_1h+a_nrh^{\prime},a_2h+a_{2}\epsilon_0rh^{\prime},\cdots,a_nh+a_{n}\epsilon_0rh^{\prime}\right)\\
		&\geq g(a_1h+a_nI_0,a_2h+a_{2}\epsilon_0I_0,\cdots,a_nh+a_{n}\epsilon_0I_0)\\
		&\geq g(a_1h+a_nI,a_2h+a_{2}\epsilon_0I,\cdots,a_nh+a_{n}\epsilon_0I)\\
		&=\overline{f}(r)\geq f(x) \  \text{in}~~ \mathbb{R}^n\setminus E_{1}.
	\end{align*}
	 Here, the first inequlity we use Lemma \ref{evest} and the last euality follows Lemma \ref{Ipthm}.
	Since  $E_{1}\subset\subset \Omega$, this finishes the proof.
\end{proof}

Next, we turn to construct solution satisfied the assumption in Corollary \ref{subst}.
\begin{lem}\label{csth}
For any $\beta_1\geq \underline{h}(1)$,	there exists a unique  solution $h_1(r;\beta_1)$ to
	\begin{equation}\label{dh}
		\left.\left\{\begin{array}{ll}
			\frac{dh_1}{dr}=\frac{I_0(r,h_1)}{r},& r>1,\\
			h_1(1)=\beta_1.
		\end{array}\right.
		\right.
	\end{equation}
	Futhermore, 
	\begin{enumerate}
		\item[(i)] $1\leq h_1(r;\beta_1)\leq \beta_1$, $\partial h_1/\partial r\leq 0$. More specifically, $h_1(r;\underline{h}(1))=\underline{h}(r)$ and $1<h_1(r;\beta_1)< \beta_1$, $\forall r>1, \beta_1>\underline{h}(1)$.
		\item[(ii)]  $h_1$ has asymptotic
		behavior \begin{equation}\label{abh}
			h_1(r) = 1 + 
			\begin{cases} 
				O(r^{-\min\{d(A,\epsilon_0), \beta\}}), & \text{if } d(A,\epsilon_0) \neq \beta, \\ 
				O(r^{-\beta} \ln r), & \text{if } d(A,\epsilon_0) = \beta, 
			\end{cases}
		\end{equation}
		as $r\to \infty$, where $d(A,\epsilon_0)$ is defined in \eqref{MAcond}. Moreover, if $\theta+f(x)>(n-2)\pi/2$,   
		$h_1$ has asymptotic
		behavior \begin{equation}\label{abhsc}
			h_1(r) = 1 + 
			\begin{cases} 
				O(r^{-\min\{d(A,0), \beta\}}), & \text{if } d(A,0) \neq \beta, \\ 
				O(r^{-\beta} \ln r), & \text{if } d(A,0) = \beta, 
			\end{cases}
		\end{equation}
		as $r\to \infty$. 
	\item[(iii)] $h_1(r;\beta_1)$ is continuous and strictly increasing with respect to $\beta_1$ and  
	\begin{equation}
		\label{hab}
		\lim\limits_{\beta_1\to +\infty} h_1(r;\beta_1)=+\infty,\quad \forall r\geq1.
	\end{equation}
	\end{enumerate}
\end{lem}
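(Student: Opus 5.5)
Existence and uniqueness come first. The right-hand side $I_0(r,h)/r$ is smooth in $r$ and locally Lipschitz in $h$: $I$ is smooth by Lemma \ref{Ipthm} and $-a_1h/a_n$ is linear, so their maximum is Lipschitz. Standard ODE theory therefore gives a unique local solution of \eqref{dh}. To continue it to all of $(1,\infty)$ we need a priori bounds, which is part (i). Since $I_0\le 0$, $h_1$ is nonincreasing, so $h_1\le\beta_1$. The curve $\underline{h}(r)$ solves the same ODE: by Lemma \ref{Ipthm}, $I(r,\underline{h}(r))=0$, hence $I_0(r,\underline{h})=0$. But $\underline{h}$ is decreasing, so strictly it is a subsolution, $\underline{h}'\le 0=I_0(r,\underline{h})/r$.

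The comparison step needs some care, since $\underline{h}$ is defined only through $\overline f$. We compare $h_1$ with $\underline{h}$. If $h_1(r_0)=\underline{h}(r_0)$ at a first touching point, then $h_1'(r_0)=0\ge\underline{h}'(r_0)$. Together with uniqueness (treating the case $\beta_1=\underline{h}(1)$ separately, where the solution starting at $\underline{h}(1)$ must stay on or above $\underline{h}$, and is forced to equal $\underline{h}$ when $\overline f$ is constant), this gives $h_1\ge\underline{h}\ge 1$. Then $h_1$ stays in the region where $I$ is defined, so the solution is global. The strict inequalities $1<h_1<\beta_1$ for $\beta_1>\underline{h}(1)$ come from two facts. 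First, $I_0<0$ strictly when $h>\underline{h}(r)$, giving strict decrease. Second, a touching argument: if $h_1$ reached $\underline{h}$ at some $r_0$, backward uniqueness against the strictly decreasing $\underline{h}$ gives a contradiction. For the claim $h_1(r;\underline{h}(1))=\underline{h}(r)$, I would check directly that the paper intends $\underline{h}$ to satisfy the ODE, i.e. treat it as the boundary case.

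For part (iii), monotonicity and continuity in $\beta_1$ follow from continuous dependence on initial data and uniqueness, since solutions cannot cross. For the limit \eqref{hab}, note that $I_0\ge -a_1h/a_n$. Integrating $h'\ge -(a_1/a_n)h/r$ gives $h_1(r)\ge \beta_1 r^{-a_1/a_n}$, which tends to $+\infty$ as $\beta_1\to\infty$ for each fixed $r$. This step is easy.

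Part (ii), the asymptotics, is the main obstacle. Since $h_1$ is decreasing and bounded below by $1$, it has a limit $h_\infty\ge1$. If $h_\infty>1$, then for large $r$ we would have $I_0(r,h_1)\le -c<0$, since $I(\infty,h)<0$ for $h>1$. Then $h_1'\le -c/r$, and $h_1\to-\infty$, which is a contradiction; hence $h_\infty=1$. Set $\eta=h_1-1\ge0$. Linearize using \eqref{eqI}: since $I(\infty,1)=0$ and $\partial_hI(\infty,1)=-d(A,\epsilon_0)$, we get $I(r,h)=-d(A,\epsilon_0)\eta+O(\eta^2)+O(r^{-\beta})$. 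Near $h=1$ we have $I\ge -a_1h/a_n$, so $I_0=I$ there. The ODE becomes
\[
r\eta'=-(d+o(1))\eta+O(r^{-\beta}),
\]
with $d=d(A,\epsilon_0)$. With the integrating factor $r^{d}$ this gives
\[
\eta(r)\le Cr^{-d+\epsilon}+Cr^{-d}\int_1^r s^{d-1-\beta}\,ds,
\]
which is a first crude bound. A bootstrap then replaces $o(1)$ by $O(\eta+r^{-\beta})$, which is integrable in $dr/r$, and removes the $\epsilon$. The integral term produces $r^{-\min\{d,\beta\}}$, with the extra $\ln r$ factor exactly when $d=\beta$. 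Under $\theta+f>(n-2)\pi/2$, Remark \ref{scc} lets us take $\epsilon_0=0$, and the same computation with $d(A,0)$ yields \eqref{abhsc}.
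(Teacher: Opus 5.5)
Your treatment of (i) and (ii) follows the paper's route. You continue the local solution globally with $\underline h$ as a lower barrier. You get $h_1(\infty)=1$ from the divergence of $\int dr/r$. You then integrate the linearization $r\eta'=-d(A,\epsilon_0)\eta+O(\eta^2)+O(r^{-\beta})$ against $r^{d(A,\epsilon_0)}$. Two of your steps are streamlined versions of the paper's:
\begin{itemize}
\item You exclude $h_\infty>1$ via $I(r,h_1(r))\le I(r,h_\infty)\to I(\infty,h_\infty)<0$; the paper uses a $\limsup$/subsequence argument.
\item You absorb the $O(\eta)$ correction into the exponent, where it is integrable against $dr/r$ once the crude bound is known; the paper uses a two-pass estimate.
\end{itemize}
Since $\eta\ge 0$, you rightly need only the upper bound.

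For (iii) you take a genuinely different route. The paper splits $[1,r]$ into intervals on which $I_0$ equals either $I$ or $-a_1h/a_n$. It proves monotonicity in the initial value on each piece (by the explicit solution, resp.\ the variational equation) and glues the pieces. It then proves \eqref{hab} by contradiction, integrating $a_n\,dh/(a_1h)\le -dr/r$. You use only that $I_0(r,\cdot)$ is Lipschitz, so distinct solutions never meet. This gives strict monotonicity and, with continuous dependence, continuity at once. You then use $I_0\ge -a_1h/a_n$ to get $h_1(r;\beta_1)\ge\beta_1 r^{-a_1/a_n}$ directly. It is the same inequality, but your argument is shorter. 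It also avoids the decomposition into switching intervals, which the paper does not justify in general.

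Two points in (i) should be fixed.

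First, your doubt about $h_1(r;\underline h(1))=\underline h(r)$ is justified, and you should state it outright rather than defer to what the paper intends. $\underline h$ solves the ODE only if $\underline h'\equiv 0$, i.e.\ only if $\overline f$ is constant. Otherwise $h_1'(1)=I_0(1,\underline h(1))=0$, while $\underline h'(1)<0$ whenever $\overline f'(1)<0$. So $h_1(\cdot;\underline h(1))$ lies strictly above $\underline h$ just after $r=1$. The paper's proof asserts the identity on the grounds that $I_0(r,\underline h(r))=0$, which is exactly this error. What is true is $h_1(\cdot;\underline h(1))\ge\underline h$, and the later constructions only use $\beta_1>\underline h(1)$.

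Second, ``backward uniqueness against $\underline h$'' is not available, because $\underline h$ is not a solution. At a first touching point $r_0$ you only get $0=h_1'(r_0)\le\underline h'(r_0)\le 0$, i.e.\ $\underline h'(r_0)=0$. Strict monotonicity of $\underline h$ does not rule this out, and the paper's argument has the same soft spot. A clean replacement runs as follows:
\begin{itemize}
\item Let $L$ be a Lipschitz constant of $I_0(r,\cdot)$ on $[\underline h(r),\beta_1]$, uniform in $r$, and set $\psi=h_1-\underline h$.
\item Since $\underline h'\le 0$ and $I_0(r,\underline h(r))=0$, you have $\psi'\ge I_0(r,h_1)/r\ge -L\psi/r$ as long as $\psi\ge 0$.
\item Hence $r^L\psi$ is nondecreasing, and $\psi(r)\ge\psi(1)r^{-L}>0$ for $\beta_1>\underline h(1)$.
\end{itemize}
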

\begin{proof}
	The proof of this lemma now will be divided into three
	steps.
	
	\textbf{Step 1.}  Proof of (i).
	
	Since $\beta_1\geq\underline{h}(1)$, and $\underline{h}$ is monotone decreasing with respect $r$, there exists $r_0>1$ such that
	\[[1,1+r_0]\times \left[\frac{\beta_1+\underline{h}(1)}{2},\frac{2\beta_1+\underline{h}(1)}{3}\right]\subset\left\{(r,h_1):r\geq 1,\beta_1\geq h_1\geq \underline{h}(r)\right\}.\] 
	Since $I$ is smooth and bounded, the right side in \eqref{dh} is Lipschtiz in
	the rectangle $[1,1+r_0]\times [(\beta_1+\underline{h}(1))/2,(2\beta_1+\underline{h}(1))/3]$. By the existence and uniqueness theorem of ODE,	the initial value problem \eqref{dh} admits
	locally a unique smooth solution $h_1$ near $r=1$ and $h_1^{\prime}(1)=I_0(1,\beta_1)$. We shall prove that the solution can be
	extended to $r=[1,+\infty)$.
	
	By Lemma \ref{Ipthm} and the fact that $I$ is nonpositive, and then $I_0$ is nonpositive, hence $h_1$ is monotone decreasing as long as $h_1>\underline{h}(r)$. Next, if $\beta_1>\underline{h}(1)$, we claim $h_1(r)>\underline{h}(r)$ for all $r\geq 1$. Arguing by contradiction, if there exists $r_1>1$ such that
	\[h_1(r_1)=\underline{h}(r_1),\quad h_1(r)>\underline{h}(r),~~\forall r\in [1,1+r_1).\]
	Then, a simple calculation shows 
	\[h_1^{\prime}(r_1)=\lim_{r\to r_1^-}\frac{h_1(r)-h_1(r_1)}{r-r_1}\leq \underline{h}^{\prime}(r_1),\]
	but it follows Lemma \ref{Ipthm} that $I(r,\underline{h}(r))=0$ and $h_1\geq \underline{h}\geq 1$, then
	\[h_1^{\prime}(r_1)=  \frac{I_0(r_1,\underline{h}(r_1))}{r_1}=\frac{I(r_1,\underline{h}(r_1))}{r_1}= 0.\]
	A contradiction since $\underline{h}$ is strictly monotone decreasing. Combining
	the results above, $h_1$ is monotone decreasing and $\beta_1\geq h_1(r)>\underline{h}(r)\geq 1$ holds as long as $h_1(r)$
	exists. By the Carathéodory extension theorem of ODE, the smooth solution $h_1$ exists for all $r\geq 1$.
	
	If \(\beta_1 = \underline{h}(1)\), then \(h_1=\underline{h}\) is a solution of the problem \eqref{dh},
	since \(I_0(r,\underline{h}(r))=I(r,\underline{h}(r))=0\) according to Lemma \ref{Ipthm} and Corollary \ref{subst}. Thus, by the uniqueness theorem for the solution
	of the ordinary differential equation, we know that \(h_1(r, \underline{h}(1)) = \underline{h}\) is the unique
	solution satisfying the problem \eqref{dh}.

	\textbf{Step 2.}  Proof of (ii).
	
	Since $h_1$ is positive, monotone
	decreasing and bounded from below by $\underline{h}(r)\geq 1$, hence $h_1$ admits a finite limit $\beta_1> h_1(\infty)\geq 1$, and we claim that $\lim \sup_{r\to \infty}I_0(r,h_1(r))=0$. Arguing by contradiction, suppose $-2\varepsilon:=\lim \sup_{r\to \infty}I_0(r,h_1(r))<0$, then there exists $\overline{r}$  sufficiently large such that
	\[\frac{I_0(r,h_1(r))}{r}\leq \frac{-\varepsilon}{r}, \quad \forall r\geq \overline{r},\]
	which contradicts the Newton--Leibniz formula
	\[h_1(\infty)-h_1(\overline{r})=\int_{\overline{r}}^{+\infty}\frac{I_0(r,h_1)}{r}dr\leq -\varepsilon \int_{\overline{r}}^{+\infty}\frac{1}{r}dr=-\infty.\]
	Furthermore,we claim tha $h_1(\infty)=1$.   Arguing by contradiction, if $h_1(\infty)>1$, then by Lemma \ref{Ipthm} and there there exists a subsequence $\left\{r_j\right\}_{j=1}^{\infty}$ such that,
	\[\lim_{j\to \infty} r_j=+\infty,\quad \text{and} \quad  \lim_{j\to \infty} I_0(r_j,h_1(r_j))=0.\]
	Since $h_1(r_j)\geq 1$, we get  $\lim_{j\to \infty} I(r_j,h_1(r_j))=0$, and then 
	\begin{align*}
		\theta&=\lim_{j\to \infty}	\overline{f}(r_j)\\
		&=\arctan \left(a_1h_1(\infty)+a_n\cdot0\right) +\sum_{i=2}^n \arctan \left(a_ih_1(\infty)+a_{i}\epsilon_0\cdot0\right)\\
		&=\sum_{i=1}^n \arctan a_ih_1(\infty) >\sum_{i=1}^n \arctan a_i=\theta, 
	\end{align*}
	which is a contradiction. Thus, we obtain $\lim_{r\to \infty} I(r,h_1(r))=0$. Indeed, we only to prove $\lim \inf_{r\to \infty}I(r,h_1(r))=0$. Arguing by contradiction, suppose $-\varepsilon:=\lim \inf_{r\to \infty}I(r,h_1(r))<0$. 
	
	Therefore,  there exists a subsequence $\left\{r_j\right\}_{j=1}^{\infty}$ such that,
	\[\lim_{j\to \infty} r_j=+\infty,\quad \lim_{j\to \infty}h_1(r_j)=1\quad  \text{and} \quad  \lim_{j\to \infty} I(r_j,h_1(r_j))=-\varepsilon,\]
   which yields
	\begin{align*}
		\theta&=\lim_{j\to \infty}	\overline{f}(r_j)=\arctan \left(a_1-a_n\varepsilon\right) +\sum_{i=2}^n \arctan \left(a_ih_1(\infty)-a_{i}\epsilon_0\varepsilon\right)\\
		&<\sum_{i=1}^n \arctan a_i =\theta, 
	\end{align*}
	which is a contradiction.

	Next, we refine the asymptotic behavior by setting
	\[t:=\ln r\in [0,+\infty) \quad \text{and}\quad \phi(t)=h_1(r(t))-1.\]
	By a direct computation, for all $t\in [0,+\infty)$
	\begin{align*}
		\phi^{\prime}(t)=h_1^{\prime}(r(t))\cdot e^t&=I_0(r(t),\phi+1)\\
		&=I_0(r(t),\phi+1)-I_{0}(\infty,\phi+1)+I_0(\infty,\phi+1)\\
		&=:I_1(t,\phi)+I_2(\phi).
	\end{align*}
	In view of  \eqref{eqI}, there exists $C>0$ such that for all $t\gg1$ and $\phi\ll 1$,  $I_{0}=I$ and 
	\[|I_{1}(t,\phi)|\leq Ce^{-\beta t},\quad \frac{dI_2}{d\phi}(0)=\frac{dI}{dh}(\infty,1)=-d(A,\epsilon_0),\]
	and 
	\[\left|I_2(\phi)-\frac{dI_2}{d\phi}(0)\phi\right|\leq C\phi^2.\]
	Consequently, $\phi$ satisfies
	\begin{equation}\label{dphi}
		\phi^{\prime}(t)=-d(A,\epsilon_0)\phi+O(e^{-\beta t})+O(\phi^2)
	\end{equation}
	as $t \to \infty$ and $\phi \to 0$.
	since $\phi> 0$, for any sufficiently small $\epsilon>0$, $\phi $ satisfies
	\[\phi^{\prime}\leq -(d(A,\epsilon_0)-\epsilon)\phi+Ce^{-\beta t}, \quad \forall t>T_0,\]
	for some large $C$ and $T_0$. Multiplying both sides by $e^{(d(A,\epsilon_0)-\epsilon)t}$ and taking
	integral over $(T_0,t)$, there exists $C>0$ such that
	\begin{equation*}
		\phi(t) \leq 
	\begin{cases} 
		Ce^{-\min\{d(A,\epsilon_0)-\epsilon, \beta\} t}, & \text{if } d(A,\epsilon_0) - \epsilon \neq \beta, \\ 
		Cte^{-\min\{d(A,\epsilon_0)-\epsilon, \beta\} t}, & \text{if } d(A,\epsilon_0) - \epsilon = \beta. 
	\end{cases}
	\end{equation*}
 Taking this estimate into equation \eqref{dphi} and choosing $\epsilon$ sufficiently small 
	we have
	\begin{equation*}
		\phi^{\prime} \leq -d(A,\epsilon_0)\phi+Ce^{-\beta t}+\begin{cases} 
			Ce^{-2\min\{d(A,\epsilon_0)-\epsilon, \beta\} t}, & \text{if } d(A,\epsilon_0) - \epsilon \neq \beta, \\ 
			Ct^2e^{-2\min\{d(A,\epsilon_0)-\epsilon, \beta\} t}, & \text{if } d(A,\epsilon_0) - \epsilon = \beta, 
		\end{cases} \ \forall t>T_0^{\prime},
	\end{equation*}
	for some large $C$ and $T_0^{\prime}>T_0$. Multiplying both sides by $e^{d(A,\epsilon_0)t}$ and taking integral
	over $(T_0^{\prime},t)$, we have the desired estimate \eqref{abh} for one side, the proof of the converse inequality is similar.  \eqref{abhsc} follows form Remark \eqref{scc}.

		\textbf{Step 3.}  Proof of (iii).
		
		We first prove that $h_1$ is strictly increasing with respect to initial values for $I_0(r,h_1)=I(r,h_1)$ and $I_0(r,h_1)=-a_1h_1/a_n$, respectively.
		
		Case 1. $I_0(r,h_1)=-a_1h_1/a_n$ and 
		\begin{equation*}
			\left.\left\{\begin{array}{ll}
				\frac{dh}{dr}=-\frac{a_1h_1}{a_nr},& r_1<r<r_1+\delta_1,\\
				h_1(r_1)=c_0,
			\end{array}\right.
			\right.
		\end{equation*}
		for some $r_1\geq 1$, $c_0>1$ and  $\delta_1>0$. It's easy to see the solution of this ODE is
		\begin{equation}\label{h1eq}
			h_1(r)=c_0\left(\frac{r_1}{r}\right)^{\frac{a_1}{a_n}},\quad r_1\leq r<r_1+\delta_1.
		\end{equation}
		Consequently, $h_1$ is strictly increasing with respect to $c_0$.
		
		Case 2. $I_0(r,h_1)=I(r,h_1)$ and
		\begin{equation*}
			\left.\left\{\begin{array}{ll}
				\frac{dh}{dr}=-\frac{I(r,h_1)}{r},& r_2<r<r_2+\delta_2,\\
				h_1(r_2)=c_1,
			\end{array}\right.
			\right.
		\end{equation*}
		for some $r_2\geq 1$, $c_1>1$ and  $\delta_2>0$.
		
			By the theorem of the differentiability of the solution with respect
		to the initial value, we can differentiate $h_1(r;c_1)$ with respect to $h_2$ as below:
		\[
		\begin{cases} 
			\frac{\partial v}{\partial r} = \frac{\partial I(r, h_1(r;c_1))/\partial h_1}{r}\cdot v , \\ 
			v(1) = 1, 
		\end{cases}
		\]
		where $v(r) := \frac{\partial h_1(r;c_1)}{\partial c_1}$. Therefore, we get
		\[v(r)=\exp\left(\int_1^r\frac{\partial I(\tau, h_1(\tau;c_1))/\partial h_1}{\tau}d\tau\right).\]
		By Lemma \ref{Ipthm} and \eqref{eqI}, $\frac{\partial I(\tau; h_1(\tau,\beta_1))}{\partial h_1}$ is bounded  and 
			\[\lim_{r\to \infty} \frac{\partial I(\tau, h_1(\tau;c_1))}{\partial h_1}=-d(A,\epsilon_0), \]
and hence $0<\partial h_1(r;c_1)/\partial c_1<r^{C},\ \forall r_2\leq r<r_2+\delta_2.$
Consequently,  \(h_1(r;c_1)\) is strictly increasing with respect to \(c_1\).

For any $r>1$, there exist small intervals $\left\{I_k \right\}_{k}$ such that $\sum_{k}|I_k|\geq r-1$ and $I_0(r,h_1)$ is equal to either $I(r,h_1)$ or $-a_1h_1/a_n$ on each interval. Now, since $h_1$ is strictly increasing with respect to initial values either $I(r,h_1)$ or $-a_1h_1/a_n$, we conclude the result by inductive reasoning. 

Finally, we  prove \eqref{hab}
by contradiction. Suppose not. There would exist \(r_0 \geq 1\), \(M > 1\) and \(\{(\beta_1)_k\}_{k=1}^\infty\), \(1 < (\beta_1)_k \to +\infty\) (\(k \to +\infty\)) such that \(h_1(r_0; (\beta_1)_k) \leq M\), \(\forall k \in \mathbb{Z}_+\). Note that there are infinitely many \((\beta_1)_k > M\) satisfying \(1 \leq h_1(r_0; (\beta_1)_k) \leq M < (\beta_1)_k\). Since
\[
\frac{dh}{dr} = \frac{I_0(r,h_1)}{r},
\]
satisfies \(I_0(r,\underline{h}(r)) = 0\) and \(0 < -I_0(r,h_1) < -a_1h_1/a_n (\forall h_1 > 1)\) , we have
\[
\frac{a_n dh_1}{a_1h_1} \leq \frac{dh_1}{-I_0(r,h_1)} = -\frac{dr}{r}.
\]
Integrating it from \(M\) to \((\beta_1)_k\) and recalling \(h_1(1, (\beta_1)_k) = (\beta_1)_k\), we get
\[
\int_M^{(\beta_1)_k} \frac{a_ndh_1}{a_1h_1} \leq \int_M^{(\beta_1)_k} \frac{dh_1}{-I_0(r,h_1)} \leq \int_{(h_1(r_0; (\beta_1)_k)}^{(\beta_1)_k} \frac{dh_1}{-I_0(r,h_1)} = -\int_{r_0}^1 \frac{dr}{r} = \ln r_0 < +\infty.
\]
Let \((\beta_1)_k \to +\infty\), we have
\[
\int_M^{(\beta_1)_k} \frac{a_ndh_1}{a_1h_1} \to +\infty,
\]
which is a contradiction. Hence the assertion (iii) of the lemma is proved.
\end{proof}

\noindent\textbf{Proof of Propsition \ref{sublem}:}  For any $\alpha_1\in \mathbb{R}$ and $\beta_1>\underline{h}(1)$,  let
\begin{equation}\label{w}
	\begin{aligned}
		\underline{w}_{\alpha_1,\beta_1}(x)=\phi_{_{\alpha_1,\beta_1}}(r)&=\alpha_1+\int_{1}^rsh_1(s;\beta_1)ds\\
		&=:\alpha_1+\frac{1}{2}\sum_{i=1}^na_ix_i^2+\mu_1(\beta_1)-\int_{r}^{\infty}s(h_1(s;\beta_1)-1)ds,
	\end{aligned}
\end{equation}
where 
\[\mu_1(\beta_1)=\int_{1}^{\infty}s(h_1(s;\beta_1)-1)ds-\frac{1}{2}.\]

Combine the above results with Corollary \ref{subst}, $\underline{w}$ is a  subsolution of \eqref{Lu3.1} with initial date $\underline{w}_{\alpha_1,\beta_1}(1)=\alpha_1$ and $\underline{w}_{\alpha_1,\beta_1}^{'}(1)=\beta_1$.

By \eqref{abh}, it's easy to see when $r\to \infty$, 
\begin{equation*}
	\int_{r}^{\infty}s(h_1(s;\beta_1)-1)ds=
\begin{cases} 
	O(r^{2-\min\{d(A,\epsilon_0), \beta\}}), & \text{if } d(A,\epsilon_0) \neq \beta, \\ 
	O(r^{2-\beta} \ln r), & \text{if } d(A,\epsilon_0) = \beta, 
\end{cases}
\end{equation*}
In view of  Remark \ref{scc},  if $\theta+f(x)>(n-2)\pi/2$, asymptotic  behavior becomes
 \begin{equation*}
	\int_{r}^{\infty}s(h_1(s;\beta_1)-1)ds=
	\begin{cases} 
		O(r^{2-\min\{d(A,0), \beta\}}), & \text{if } d(A,0) \neq \beta, \\ 
		O(r^{2-\beta} \ln r), & \text{if } d(A,0) = \beta, 
	\end{cases}
\end{equation*}
as $r\to \infty$.
Inserting the above equality into \eqref{w}, we can obtain the asymptotic behavior \eqref{omega1} and \eqref{omega1sc}.

Following  Lemma \ref{csth} (iii), we observe that the function $\mu_1(\beta_1)$ is strictly increasing in $\beta_1$ and $\mu_1(\beta_1)\to +\infty$ as $\beta_1\to +\infty$. Consequently, the range of $\mu_1$ is precisely $(\mu_1(\underline{h}(1)), +\infty)$.  \qed

\subsection{Generalized symmetric supersolutions}\label{gssup}
If $f\geq  0$, then we can take $\overline{u} = \frac{1}{2}x^{\mathsf{T}} Ax + c$ directly as the desired supersolution, provided that c > $c^*$ and $c^*$ is selected suitably, see for instance the argument in \cite{CL2003,BLL2014,LZ2019}.
Otherwise, 
in order to prove Theorem \ref{tedp} in Sec. \ref{sec5}, we still need to find supersolutions on  $\mathbb{R}^n\setminus \Omega$.

Now, we turn to construct supsolutions \( \overline{w}(x)=\phi(r) \)  that are generalized symmetric with respect to \( A \).
   By the discussion in \eqref{uof}, there exists a unique increasing positive function \( \overline{h}(r) \) defined on \([1, +\infty)\) determined by
\begin{equation}\label{ovrh1}
	g \left( a_1\overline{h}(r), \cdots, a_n\overline{h}(r) \right) = \underline{f}(r).
\end{equation}
 Recall that $A\in \mathcal{A}_0$, and then $ \overline{h}(r)\leq 1 $ and $\lim\limits_{r\to \infty}\overline{h}(r)=1$. 
 
 Next, we give the supersolutions existence Lemma.  
 \begin{prop}\label{suplem}
Let $f$ satisfying \eqref{uof} with $\beta>2$.  Let $A \in \mathcal{A}_{0}$,   $\alpha_2\in \mathbb{R}$   and $0<\beta_2<\overline{h}(1)$. 
 Then	there exists a constant $\overline{r}=\overline{r}(n,A,b,\epsilon_0,f)$, such that  exterior problem \eqref{Lu3.1} admits a  supersolution $\overline{w}_{\alpha_2,\beta_2} \in C^2(\mathbb{R}^n\setminus E_{\overline{r}})$. This supersolution is generalized symmetric with respect to $A$ and has the asymptotic behavior
 	\begin{equation}\label{omegasup}
 		\overline{w}_{\alpha_2,\beta_2}(x) = \frac{1}{2}x^{\mathsf{T}}Ax + c_{\alpha_2,\beta_2} + \begin{cases} 
 			O(r^{2-\min\{d(A,\epsilon_0), \beta\}}), & \text{if } d(A,\epsilon_0) \neq \beta, \\ 
 			O(r^{2-\beta} \ln r), & \text{if } d(A,\epsilon_0) = \beta, 
 		\end{cases}  
 	\end{equation}
 	as $|x| \to \infty$,	where $c_{\alpha_2,\beta_2}$ is a contant depending only on  $n, A, b, \theta, f, \alpha_2$ and $\beta_2$. If $\theta+f(x)>(n-2)\pi/2$,   the asymptotic behavior becomes
 	\begin{equation}\label{omegasupsc}
 		\overline{w}_{\alpha_2,\beta_2}(x) = \frac{1}{2}x^{\mathsf{T}}Ax + c_{\alpha_2,\beta_2} + \begin{cases} 
 			O(r^{2-\min\{d(A,0), \beta\}}), & \text{if } d(A,0) \neq \beta, \\ 
 			O(r^{2-\beta} \ln r), & \text{if } d(A,0) = \beta, 
 		\end{cases}  
 	\end{equation}
 	as $|x| \to \infty$. 
 	\end{prop}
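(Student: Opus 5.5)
I will mirror the construction of the subsolution in Section~\ref{gssub}, reversing every inequality. Take $\overline{w}(x)=\phi(r)$ generalized symmetric with respect to $A$, with $h=\phi'/r$. Here $h$ will be \emph{increasing} ($h'\ge0$) and satisfy $0<h<\overline{h}(r)\le 1$, where $\overline{h}$ is defined by \eqref{ovrh1}. The point is that when $h'\ge 0$ the term $r^{-1}h'\sum a_j^2x_j^2$ in \eqref{gra} is nonnegative. Weyl's inequality (the analogue of \eqref{evest1}) then gives
\[
a_ih\le\lambda_i(D^2\overline{w})\le a_ih+V,\qquad 0\le a_1rh'\le V\le a_nrh'.
\]
Arguing as in Lemma~\ref{evest} (with \cite[Lemma 4]{BLW2024}), together with the concavity/monotonicity of $\arctan$ on the relevant range, I expect
\[
F(D^2\overline{w})\le g\bigl(a_1h+a_nrh',\,a_2(h+\epsilon_0rh'),\dots,a_n(h+\epsilon_0rh')\bigr).
\]
Here the first slot absorbs the whole of $V$ and the others get only an $\epsilon_0$ share. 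This share is what requires $\epsilon_0>0$ in general, while $\epsilon_0=0$ is allowed under supercritical phase, as in Remark~\ref{scc}. If this redistribution inequality fails in the upper direction, I would instead place the whole of $V$ on the eigenvalue $a_1h$. This costs at most replacing $d(A,\epsilon_0)$ by the same expression, since the first slot already carries $a_n$.

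Next I would prove an analogue of Lemma~\ref{Ipthm}. There is a unique nonnegative smooth $J(r,h)$ on $\{0<h<\overline{h}(r)\}$ solving
\[
g(a_1h+a_nJ,\,a_2h+a_2\epsilon_0J,\dots,a_nh+a_n\epsilon_0J)=\underline{f}(r).
\]
Existence follows from the intermediate value theorem, since the left side is $<\underline f$ at $J=0$ and tends to $n\pi/2>\underline f$ as $J\to\infty$. The remaining properties are $J(r,\overline h(r))=0$, $J$ decreasing in $h$, $|J(r,h)-J(\infty,h)|\le Cr^{-\beta}$, and $\partial_hJ(\infty,1)=-d(A,\epsilon_0)$; the last comes from the identical implicit differentiation. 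Then any $C^1$ function $h$ with $0<h<\overline h$, $h'\ge0$ and $h'\le J(r,h)/r$ gives a supersolution. This is the analogue of Corollary~\ref{subst}, and it needs $\underline f(r)\le\theta+f(x)$ from \eqref{uof}.

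Now solve $h_2'=J(r,h_2)/r$ with $h_2(1)=\beta_2\in(0,\overline h(1))$. A barrier argument as in Step 1 of Lemma~\ref{csth} shows $h_2<\overline h$ and $h_2$ increasing, because $J(r,\overline h(r))=0$ while $\overline h$ is strictly increasing. Bounded monotonicity gives a limit, and the same contradiction arguments give $h_2(\infty)=1$. The substitution $t=\ln r$, $\psi=1-h_2$ gives $\psi'=-d(A,\epsilon_0)\psi+O(e^{-\beta t})+O(\psi^2)$. Integrating as in Step 2 then yields
\[
h_2=1+O\bigl(r^{-\min\{d(A,\epsilon_0),\beta\}}\bigr),
\]
with an extra $\ln r$ factor when the two exponents are equal. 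Finally I set
\[
\overline w=\alpha_2+\int_{\overline r}^r s\,h_2\,ds=\tfrac12x^{\mathsf T}Ax+c_{\alpha_2,\beta_2}+\int_r^\infty s\,(1-h_2)\,ds,
\]
and the tail integral produces \eqref{omegasup}, with \eqref{omegasupsc} following for $\epsilon_0=0$.

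The radius $\overline r$ enters for the following reason, and I expect this to be the main obstacle. I need $J$ to be defined and bounded near $h_2$ for all $r$, and also the positivity constraint $h+\epsilon_0rh'>0$ together with the validity of the eigenvalue comparison. Both hold once $r$ is large, because $\underline f$ is then close to $\theta$ and $h_2$ close to $1$, so that $J$ is small. I would therefore start the ODE at $r=\overline r$, chosen so that $\underline f(r)$ lies in a fixed compact subrange of $(0,n\pi/2)$ and the implicit function bounds for $J$ are uniform. The delicate step is the eigenvalue upper bound in the non-supercritical case, where $\arctan$ is not concave on all the slots. I would handle it exactly as in \cite[Lemma 4]{BLW2024}, at the cost of choosing $\epsilon_0$ small enough that \eqref{MAeps1} still gives $d(A,\epsilon_0)>2$.
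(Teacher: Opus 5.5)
Your architecture matches the paper's: the implicit function $J$ of Lemma \ref{Jpthm}, the comparison of Corollary \ref{supst}, the ODE $h_2'=J(r,h_2)/r$ of Lemma \ref{cspth}, then integrating $sh_2$. The existence of $J$, the barrier argument and the $\psi=1-h_2$ asymptotics are fine. The gap is the one step that is not a mirror image of Section~\ref{gssub}. That step is the upper bound $F(D^2\overline w)\le g(\overline a_{\epsilon_0})$ (Lemma \ref{spv}), which you only ``expect''. The hedges you put around it are wrong:
\begin{itemize}
\item Putting all of $V$ on the first slot gives $g(a_1h+V,a_2h,\dots,a_nh)$, which is $\le g(\overline a_{\epsilon_0})$ because $h'\ge0$. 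So it is a \emph{stronger} claim, not a fallback.
\item There is no loss of concavity. With $h>0$ and $h'\ge0$, every eigenvalue is $\ge a_1h>0$, where $\arctan$ is concave, whatever the phase.
\item The phase and $\epsilon_0>0$ matter for solvability of the $J$-equation, not for this comparison: with $\epsilon_0=0$ the left side tends only to $\pi/2+\sum_{i\ge2}\arctan(a_ih)$ as $J\to\infty$.
\item $h+\epsilon_0rh'>0$ is automatic, and $J$ exists for every $r\ge1$. So none of your stated reasons is what forces $\overline r$.
\end{itemize}

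The paper proves Lemma \ref{spv} by the mean value theorem:
\[
g(\lambda)-g(\overline a_{\epsilon_0})=(\theta_1V-a_nrh')\,\partial_1g(\tilde a)+\sum_{i\ge2}(\theta_iV-\epsilon_0a_irh')\,\partial_ig(\tilde a).
\]
Since $h\to1$ and $rh'\to0$, we have $|\nabla g(\tilde a)-\nabla g(a)|<\epsilon$ for $r\ge\overline r$. Since $0<a_1\le a_i$, $\partial_1g(a)=(1+a_1^2)^{-1}$ is the largest partial, so $\sum_i\theta_iV\partial_ig(a)\le a_nrh'\,\partial_1g(a)$. The strictly negative leftover $-\epsilon_0rh'\sum_{i\ge2}a_i/(1+a_i^2)$ then absorbs the $O(\epsilon)\,rh'$ error. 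This is where $\epsilon_0>0$ and $\overline r$ enter. Because $\overline r$ depends on how fast $h_2\to1$ and $rh_2'\to0$, it must be chosen after solving the ODE from $r=1$. Fixing $\overline r$ beforehand from $\underline f$ and starting the ODE there, as you suggest, does not put $h_2$ near $1$ at the initial point when $\beta_2$ is far from $\overline h(\overline r)$.

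Alternatively, your concavity intuition can be made exact and global. For the positive semidefinite rank-one update in \eqref{gra}, interlacing gives $a_ih\le\lambda_i\le a_{i+1}h$ for $i<n$ and $\lambda_n\ge a_nh$. So the intervals $[a_ih,\lambda_i]$ are non-overlapping subsets of $[a_1h,\infty)$ of total length $V$. Since $\arctan'$ is decreasing on $[a_1h,\infty)$,
\[
F(D^2\overline w)-g(a_1h,\dots,a_nh)\le\arctan(a_1h+V)-\arctan(a_1h).
\]
Using $V\le a_nrh'$, this gives $F(D^2\overline w)\le g(a_1h+a_nrh',a_2h,\dots,a_nh)\le g(\overline a_{\epsilon_0})$ for every $r\ge1$, which removes the need for $\overline r$ in this step. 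One of these two arguments has to be supplied. An appeal to \cite[Lemma 4]{BLW2024}, which the paper invokes only for the lower bound in Lemma \ref{evest}, does not cover the upper direction.
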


We start by estimating $F(D^2\overline{w})$ from above with the following inequality.
\begin{lem}\label{spv}
Suppose $\epsilon_0>0$.	For any $C^2$ generalized symmetric function $\overline{w}$ with respect to $A$. Let $h=\overline{w}^{'}(r)/r$ and $\epsilon_0$ as in Remark \ref{MAcond}.  Suppose that $h > 0$ , $h' \geq 0$ in $[1,\infty)$,
		then the eigenvalues satisfy
	\begin{equation}\label{spvneq1}
		a_ih  \leq \lambda_i(D^2\overline{w}) \leq a_ih++ r^{-1}h'\sum_{j=1}^{n}a_j^2x_j^2, \quad \forall i=1,\cdots,n.
	\end{equation}
Moreover,	if 
\begin{equation}
	\label{limh}
	\lim_{r\to \infty} h=1,\quad \lim_{r\to \infty} rh^{'}=0,
\end{equation}
	then there exists $\overline{r}=\overline{r}\left(n,A,\epsilon_0,h,h^{'}\right)$  such that for any $r\geq\overline{r}$, 
	\begin{equation}\label{spvneq2}
		F(D^2\overline{w}) \leq g(\overline{a}_{\epsilon_0}),
	\end{equation}
 where $\overline{a}_{\epsilon_0} := \big(a_1h + a_nrh', a_2\left(h +\epsilon_0rh'\right), \cdots, a_n\left(h +\epsilon_0rh'\right)\big)$.
\end{lem}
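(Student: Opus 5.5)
The plan mirrors the proof of Lemma \ref{evest}, with the inequalities reversed because now $h'\ge 0$. First, by \eqref{gra}, $D^2\overline{w}=h\,A+r^{-1}h'\,(Ax)(Ax)^{\mathsf{T}}$ when $A$ is diagonalized. This is $hA$ plus a rank-one positive semidefinite matrix of trace
\[
V:=r^{-1}h'\sum_{j=1}^n a_j^2x_j^2\ \ge 0 .
\]
Weyl's monotonicity principle then gives $\lambda_i(hA)\le\lambda_i(D^2\overline{w})\le \lambda_i(hA)+V$, which is \eqref{spvneq1}. Since the trace is exact, we can also write, as in \eqref{lmduw},
\[
\lambda_i(D^2\overline{w})=a_ih+\theta_iV,\qquad \theta_i\in[0,1],\quad \sum_i\theta_i=1 .
\]
In addition, $r^2=\sum a_jx_j^2$ gives the analogue of \eqref{ineqV}:
\[
0\le a_1rh'\le V\le a_nrh' .
\]

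Second, to get \eqref{spvneq2} we need an upper bound on $\sum_i\arctan(a_ih+\theta_iV)$ over the simplex $\{\theta_i\ge0,\ \sum\theta_i=1\}$. The function $\theta\mapsto\sum_i\arctan(a_ih+\theta_iV)$ is not concave in general, so the maximum need not sit at a vertex. The mean value theorem gives
\[
\sum_i\arctan(a_ih+\theta_iV)-\sum_i\arctan(a_ih)=\sum_i\frac{\theta_iV}{1+\xi_i^2},\qquad \xi_i\in[a_ih,\,a_ih+\theta_iV].
\]
By \eqref{limh}, for $r\ge\overline{r}$ we have $h$ close to $1$ and $V\le a_nrh'$ small. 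Then every $\xi_i$ lies close to $a_i$, so $1/(1+\xi_i^2)\le (1+\eta)/(1+a_i^2)$ for a small $\eta$ that goes to zero with $r$. This yields
\[
F(D^2\overline{w})\le \sum_i\arctan(a_ih)+(1+\eta)\,\frac{V}{1+a_1^2}\le \sum_i\arctan(a_ih)+(1+\eta)\,\frac{a_n\,rh'}{1+a_1^2}.
\]

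Third, apply the same linearization to the right-hand side of \eqref{spvneq2}. Its increment over $\sum\arctan(a_ih)$ is
\[
\frac{a_n rh'}{1+\zeta_1^2}+\epsilon_0\sum_{i\ge2}\frac{a_i rh'}{1+\zeta_i^2}
\]
with $\zeta_i$ near $a_1$ and near $a_i$ respectively. This is at least
\[
(1-\eta)\Big(\frac{a_n}{1+a_1^2}+\epsilon_0\sum_{i\ge2}\frac{a_i}{1+a_i^2}\Big)rh'.
\]
The comparison reduces to $(1+\eta)\frac{a_n}{1+a_1^2}\le(1-\eta)\big(\frac{a_n}{1+a_1^2}+\epsilon_0\sum_{i\ge2}\frac{a_i}{1+a_i^2}\big)$. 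Because $\epsilon_0>0$, there is a strict positive margin $\epsilon_0\sum_{i\ge2}a_i/(1+a_i^2)$, so this holds once $\eta$ is small enough. That fixes $\overline{r}$, which depends only on $n$, $A$, $\epsilon_0$ and the rates in \eqref{limh}.

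The main obstacle is exactly this second-order error. Without the $\epsilon_0$ slack, the curvature of $\arctan$ could reverse the inequality. I would therefore quantify $\eta$ explicitly through $|h-1|+rh'$, using that $|\arctan''|$ is bounded by $1$, so that every error term is controlled by a multiple of $rh'$ that the margin can absorb.
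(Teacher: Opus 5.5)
Your proof is correct and is essentially the paper's argument. It uses the same decomposition $\lambda_i(D^2\overline{w})=a_ih+\theta_iV$ with $a_1rh'\le V\le a_nrh'$, the ordering $\frac{1}{1+a_i^2}\le\frac{1}{1+a_1^2}$, continuity of $\nabla g$ near $a$ as $r\to\infty$, and the margin $\epsilon_0\sum_{i\ge2}\frac{a_i}{1+a_i^2}$ absorbing an error of order $\eta\,rh'$. The only difference is bookkeeping: you linearize both sides about $(a_1h,\dots,a_nh)$ with factors $1\pm\eta$, whereas the paper does a single mean value expansion along the segment from $\lambda(D^2\overline{w})$ to $\overline{a}_{\epsilon_0}$. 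Your aside on concavity is off but unused: all arguments $a_ih+\theta_iV$ are positive, so $\theta\mapsto\sum_i\arctan(a_ih+\theta_iV)$ is concave, and that concavity is exactly why its maximum over the simplex can be interior.
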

\begin{proof}
The	estimate \eqref{spvneq1} is similar to Lemma \ref{evest}.	

	Let $V$ and $g$ as before. It's clear that
	\begin{equation}
		\label{spvneq3}
		0\leq a_1rh^{'}\leq V \leq a_nrh^{'}.
	\end{equation}
	
	 Recall the eigenvalues formular	\eqref{lmduw}, the difference between $g(\lambda(\overline{w}))$ and $g(\overline{a}_{\epsilon_0})$ gives
	\begin{equation}\label{ineqV1}
	g(\lambda(\overline{w}))-g(\overline{a}_{\epsilon_0})=\left(\theta_1V-a_nrh^{'}\right)\frac{\partial g}{\partial \lambda_1}(\tilde{a}_{\epsilon_0})+\sum_{i=2}^n\frac{\partial g}{\partial \lambda_i}(\tilde{a}_{\epsilon_0})\left(\theta_iV-\epsilon_0a_irh^{'}\right)
	\end{equation}
	where $\tilde{a}_{\epsilon_0}$ is a point lying in the segment between $\lambda(\overline{w})$ and $\overline{a}_{\epsilon_0}$.
	
By \eqref{limh}, one obtains
\[\tilde{a}_{\epsilon_0}\to a \quad \text{and} \quad \nabla g(\tilde{a}_{\epsilon_0})\to \nabla g(a) \ \text{as}\ r\to +\infty. \]
  That is, for sufficiently small $\epsilon$, there exists $\overline{r}$ such that
  \[\left|\nabla g(\tilde{a}_{\epsilon_0})- \nabla g(a)\right|<\epsilon,\quad \forall r\geq \overline{r}. \]
  Notice that $a_1\leq a_i$, $i=1,\cdots,n$, then by \eqref{spvneq3} and  \eqref{ineqV1}, the inequality
\begin{align*}
	g(\lambda(\overline{w}))-g(\overline{a}_{\epsilon_0})&\leq \left(V-a_nrh^{'}\right)\frac{\partial g}{\partial \lambda_1}(a)-\epsilon\left(\theta_1v-a_nrh^{'}\right)\\
	&\quad +\epsilon V\sum_{i=2}^n\theta_i-\epsilon_0\sum_{i=2}^n\frac{\partial g}{\partial \lambda_i}(\tilde{a}_{\epsilon_0})a_irh^{'}\\
	&\leq \left[\left(1-2\theta_1\right)V+a_nrh^{'}\right]\epsilon-\epsilon_0 rh^{'}\left[\sum_{i=2}^n\left(\frac{\partial g}{\partial \lambda_i}(a)-\epsilon\right)a_i\right] \\
	&\leq rh^{'}\left[2\epsilon a_n+\epsilon\epsilon_0\sum_{i=2}^na_i-\epsilon_0\sum_{i=2}^n\frac{\partial g}{\partial \lambda_i}(a)a_i\right] \\
	&= rh^{'}\left(2\epsilon a_n+\epsilon\epsilon_0\sum_{i=2}^na_i-\epsilon_0\sum_{i=2}^n\frac{a_i}{1+a_i^2}\right)\leq 0,
\end{align*}
holds when $0<\epsilon<\epsilon_0\left(2a_n+\epsilon_0\sum_{i=2}^n a_i\right)^{-1}\sum_{i=2}^n\frac{a_i}{1+a_i^2}$. 
\end{proof}

In order to find supersolutions of \eqref{Lu3.1}, Lemma \ref{spv} suggests us to consider the following second-order ODE:
\begin{equation}\label{spe}
	g\left(a_1h + a_nrh', a_2\left(h +\epsilon_0rh'\right), \cdots, a_n\left(h +\epsilon_0rh'\right)\right)=\underline{f}, \quad r>1
\end{equation}
with the initial data
\begin{equation}\label{spid}
	h(1)=\alpha_2,\quad h'(1)=\beta_2.
\end{equation}
We will show the global existence of solutions to problems \ref{spe} and \ref{spid} and determine their asymptotic behavior at infinity, which parallel to  Lemmas \ref{Ipthm}--\ref{csth}  in Sec. \ref{gssub}.

Since the proof method is similar, we proceed directly to the statement of the relevant lemma and corollary.

\begin{lem}\label{Jpthm}
Let $r\in [1,+\infty)$ and $h_2\in[\beta_2,\overline{h}(1)]$ with $h_2>0$. Then	there exists a unique positive smooth function $J(r;h_2)$ satisfying 
	\begin{equation}\label{imJeq}
		g(a_1h_2+a_nJ,a_2h_2+a_{2}\epsilon_0J,\cdots,a_nh_2+a_{n}\epsilon_0J)=\underline{f}(r).
	\end{equation}
	Especially, $J(r,h_2)=0$ if and only if $h_2(r)=\overline{h}(r)$,  and $J(r,h_2)$  is monotone increasing with respect to $r$ and monotone decreasing with respect to $h_2$. Furthermore, there exists $C>0$ such that
	\begin{equation}\label{eqJ}
		|J(r,h_2)-J(\infty,h_2)|\leq Cr^{-\beta}\  \text{and}\ \frac{\partial J}{\partial h_2}(\infty,1)=-d(A,\epsilon_0).
	\end{equation}
\end{lem}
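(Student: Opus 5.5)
The proof will mirror Lemma \ref{Ipthm}, with the inequalities reversed. Fix $r\ge1$ and $h_2\in[\beta_2,\overline{h}(1)]$, $h_2>0$, and set
\[
G(J):=g(a_1h_2+a_nJ,\,a_2h_2+a_2\epsilon_0J,\,\cdots,\,a_nh_2+a_n\epsilon_0J).
\]
Since $a_i>0$ and $\arctan$ is increasing, $G$ is smooth and strictly increasing in $J$. Its derivative is
\[
G'(J)=a_n\partial_1g+\epsilon_0\sum_{i\ge2}a_i\partial_ig>0 .
\]

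\textbf{Existence and uniqueness.} At $J=0$ we have $G(0)=g(a_1h_2,\cdots,a_nh_2)$. Because $\overline{h}$ is increasing, $h_2\le\overline{h}(1)\le\overline{h}(r)$, so $G(0)\le g(a\overline{h}(r))=\underline{f}(r)$ by \eqref{ovrh1}. Equality holds iff $h_2=\overline{h}(r)$.

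For the opposite bound, $G(J)\to n\pi/2>\underline{f}(r)$ as $J\to+\infty$. For a uniform bound, I would take
\[
J\ge M:=\max\{1/\epsilon_0,\;a_1/a_n\}\cdot\overline{h}(1)^{-1}\cdots
\]
or more simply any $J$ making every entry at least $a_i\cdot\sup\overline{h}$. Then $G(J)\ge g(a)=\theta$. Using $\underline{f}\le\theta$ (monotone increasing to $\theta$), this gives $G(J)>\underline{f}(r)$ strictly once the entries exceed $a_i$.

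The intermediate value theorem then gives a unique $J(r,h_2)\ge0$. The implicit function theorem, with $G'>0$, gives smoothness and local boundedness in $(r,h_2)$. Positivity and the characterization of $J=0$ follow from the equality case above.

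\textbf{Monotonicity.} Differentiate \eqref{imJeq}:
\[
G'(J)\,\partial_rJ=\underline{f}'(r)\ge0,
\qquad
\sum_i a_i\partial_ig+G'(J)\,\partial_{h_2}J=0 .
\]
Hence $J$ is increasing in $r$ and strictly decreasing in $h_2$.

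\textbf{Asymptotics \eqref{eqJ}.} First, $J(\infty,h_2)$ exists by monotonicity in $r$ and boundedness. Subtract the equations at $r$ and at $\infty$, where $\underline{f}(\infty)=\theta$. The mean value theorem gives
\[
\theta-\underline{f}(r)=G'(\xi)\bigl(J(\infty,h_2)-J(r,h_2)\bigr).
\]
Here $G'(\xi)\ge a_n/(1+\xi_1^2)\ge c>0$, since the arguments stay bounded. Because $\theta-\underline{f}(r)=O(r^{-\beta})$, this yields $|J(r,h_2)-J(\infty,h_2)|\le Cr^{-\beta}$.

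Next, $J(\infty,1)=0$, since $g(a)=\theta$ and uniqueness hold at $r=\infty$. Evaluate the $h_2$-derivative identity at $(\infty,1)$, where $\partial_ig(a)=1/(1+a_i^2)$:
\[
\sum_i\frac{a_i}{1+a_i^2}+\Bigl(\frac{a_n}{1+a_1^2}+\epsilon_0\sum_{i\ge2}\frac{a_i}{1+a_i^2}\Bigr)\partial_{h_2}J(\infty,1)=0 .
\]
By \eqref{MAeps1} this is exactly $\partial_{h_2}J(\infty,1)=-d(A,\epsilon_0)$.

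\textbf{Main obstacle.} The delicate point is that the domain $h_2\in[\beta_2,\overline{h}(1)]$ must be extended to a neighborhood of $h_2=1$, since $\overline{h}(r)\to1$. I would therefore define $J$ on $\{h_2\le\overline{h}(r)\}$, the analogue of Lemma \ref{Ipthm}, so that the derivative at $(\infty,1)$ makes sense as a limit. Justifying the passage $r\to\infty$ in $\partial_{h_2}J$ then uses the continuity of $\nabla g$ and $J(r,\overline{h}(r))=0\to J(\infty,1)$.
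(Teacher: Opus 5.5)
Your proposal is correct and follows the paper's route. The paper gives no separate proof and instead refers to the argument of Lemma \ref{Ipthm} with the inequalities reversed: the intermediate value and implicit function theorems give existence, uniqueness and smoothness; differentiating \eqref{imJeq} gives the monotonicity and $\partial J/\partial h_2(\infty,1)$; and the mean value theorem gives the $r^{-\beta}$ rate. Your handling of the point $h_2=1$, which lies outside $[\beta_2,\overline{h}(1)]$, via the domain $\{h_2\le\overline{h}(r)\}$ and a limit is more careful than the paper's, and the garbled uniform bound you need for $G'(\xi)\ge c$ is simply $0\le J(r,h_2)\le\max\{a_1/a_n,1/\epsilon_0\}(1-h_2)$, obtained by comparing with $g(a)=\theta\ge\underline{f}(r)$.
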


\begin{rem}\label{rmsup}
	If $\theta+f(x)>(n-2)\pi/2$, let $\epsilon_0=0$ and replace $a_{\epsilon_0}$, $h_2$ and $J(r,h_2)$  with $\overline{a}_{J}$, $w$ and $H(r,w)$ respectively.
	 (see \cite[Lemma 11, 12]{BLW2024}). In this case, the right hand of the second inequality in  \eqref{eqJ}  becomes $-d(A,0)$.
\end{rem}

\begin{cor}\label{supst}
	Let $\overline{w}, h_2, A$ and $J$ as in Lemma \eqref{Jpthm}. If $h_2\in C^{1} [1,+\infty)$ satisfying
	\[0<\beta_2\leq h_2<\overline{h}(1),\quad h_2^{\prime}\geq 0,\quad   h_2^{\prime}\leq \frac{J(r,h_2)}{r}\ \text{in}\ (1,\infty),\]
and \eqref{limh}.	Then, $\overline{w} \in C^{2}(\mathbb{R}^n\setminus E_{\overline{r}})$ is a supersolution to \eqref{Lu3.1}.
\end{cor}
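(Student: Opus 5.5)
The argument follows the proof of Corollary \ref{subst}, with every inequality reversed and Lemma \ref{spv} used in place of Lemma \ref{evest}. Write $\overline{w}(x)=\phi(r)$ with $r=r_A(x)$ and $h_2=\phi'(r)/r$. By hypothesis $h_2>0$ and $h_2'\ge 0$ on $[1,\infty)$, and \eqref{limh} holds. These are exactly the assumptions of Lemma \ref{spv}, so there is $\overline{r}$, depending on $n,A,\epsilon_0$ and the given $h_2,h_2'$, such that
\[
F(D^2\overline{w})\le g\bigl(a_1h_2+a_nrh_2',\,a_2(h_2+\epsilon_0rh_2'),\dots,a_n(h_2+\epsilon_0rh_2')\bigr)\quad\text{for } r_A(x)\ge\overline{r}.
\]
Regularity is routine. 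Since $h_2\in C^1$, we get $\phi'=rh_2\in C^1$, so $\phi\in C^2$. By \eqref{gra}, $\overline{w}\in C^2$ away from the origin, in particular on $\mathbb{R}^n\setminus E_{\overline{r}}$.

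Next, use the ODE inequality together with monotonicity of $g$. Put $t:=rh_2'$, so $0\le t\le J(r,h_2)$. The map
\[
s\mapsto g\bigl(a_1h_2+a_ns,\,a_2h_2+a_2\epsilon_0s,\dots,a_nh_2+a_n\epsilon_0s\bigr)
\]
is nondecreasing, because every coefficient $a_n$, $a_i\epsilon_0$ is nonnegative and $\arctan$ is increasing. Replacing $s=t$ by $s=J(r,h_2)$ therefore gives an upper bound. By \eqref{imJeq}, that value is exactly $\underline{f}(r)$. Finally \eqref{uof} gives $\underline{f}(r_A(x))\le\theta+f(x)$. Combining these, $F(D^2\overline{w})\le\theta+f(x)$ on $\mathbb{R}^n\setminus E_{\overline{r}}$, so $\overline{w}$ is a classical, and hence viscosity, supersolution of \eqref{Lu3.1} there.

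\textbf{Main obstacle.} The subtle step is checking that Lemma \ref{Jpthm} applies along the trajectory, i.e. that $J(r,h_2(r))$ is defined and nonnegative for every $r$. This needs $h_2(r)$ to stay in $[\beta_2,\overline{h}(1)]$ and below $\overline{h}(r)$; the sign of $J$ comes from monotonicity in $h_2$ and $J(r,\overline{h}(r))=0$. The hypotheses $\beta_2\le h_2<\overline{h}(1)$ and $h_2'\ge0$ give this range. Where $h_2$ might exceed $\overline{h}(r)$, I would note that the required inequality $0\le h_2'\le J/r$ already forces $J\ge0$, which holds precisely when $h_2\le\overline{h}(r)$. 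So the hypothesis itself keeps the trajectory in the domain of Lemma \ref{Jpthm}.

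The other point to watch is that $\overline{r}$ depends on the particular $h_2$ through the convergence in \eqref{limh}. That is why the conclusion is stated only on $\mathbb{R}^n\setminus E_{\overline{r}}$ and not on all of $\mathbb{R}^n\setminus\Omega$. In the supercritical case $\epsilon_0=0$ (Remark \ref{rmsup}), the same argument goes through, with the corresponding estimate from \cite{BLW2024} replacing Lemma \ref{spv}.
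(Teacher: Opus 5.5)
Your argument is correct, and it is the argument the paper intends: the paper omits this proof as the mirror image of Corollary~\ref{subst}. Lemma~\ref{spv} gives $F(D^2\overline{w})\le g(\overline{a}_{\epsilon_0})$ for $r\ge\overline{r}$. Monotonicity of $s\mapsto g(a_1h_2+a_ns,\,a_2h_2+a_2\epsilon_0 s,\dots)$, together with $0\le rh_2'\le J(r,h_2)$ and \eqref{imJeq}, bounds this by $\underline{f}(r)$, and \eqref{uof} finishes. The worry in your last paragraph never arises: $\overline{h}$ is increasing, so $h_2<\overline{h}(1)\le\overline{h}(r)$ already keeps $(r,h_2(r))$ in the region where $J\ge 0$, without appealing to the differential inequality.
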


Similar to Lemma \ref{csth}, we can construct supersolution satisfied the assumption in Corollary \ref{supst}.
\begin{lem}\label{cspth}
	For any $0<\beta_2\leq  \overline{h}(1)$,	there exists a unique  solution $h(r;\beta_2)$ to
	\begin{equation}\label{dhsup}
		\left.\left\{\begin{array}{ll}
			\frac{dh_2}{dr}=\frac{J(r,h_2)}{r},& r>1,\\
			h(1)=\beta_2.
		\end{array}\right.
		\right.
	\end{equation}
	Futhermore, 
	\begin{enumerate}
		\item[(i)] $\beta_2\leq h_2(r;\beta_2)\leq 1$, $\partial h_2/\partial r\geq 0$. More specifically, $h_2(r;\overline{h}(1))=\overline{h}(1)$ and $\beta_2<h_2(r;\beta_2)< 1$, $\forall r>1, \beta_2<\underline{h}(\overline{r})$.
		\item[(ii)]  $h_2$ has asymptotic
		behavior \begin{equation}\label{absuph}
			h_2(r) = 1 + 
			\begin{cases} 
				O(r^{-\min\{d(A,\epsilon_0), \beta\}}), & \text{if } d(A,\epsilon_0) \neq \beta, \\ 
				O(r^{-\beta} \ln r), & \text{if } d(A,\epsilon_0) = \beta, 
			\end{cases}
		\end{equation}
	 and $rh_2^{'}\to 0$	as $r\to \infty$, where $d(A,\epsilon_0)$ is defined in  \eqref{MAeps}.  Moreover, if $\theta+f(x)>(n-2)\pi/2$,  Remark \ref{rmsup} implies  
	 $h_2$ has asymptotic
	 behavior \begin{equation}\label{absuphsc}
	 	h_2(r) = 1 + 
	 	\begin{cases} 
	 		O(r^{-\min\{d(A,0), \beta\}}), & \text{if } d(A,0) \neq \beta, \\ 
	 		O(r^{-\beta} \ln r), & \text{if } d(A,0) = \beta, 
	 	\end{cases}
	 \end{equation}
	and $rh_2^{'}\to 0$ as $r\to \infty$. 
		\item[(iii)] $h_2(r;\beta_2)$ is continuous and strictly increasing with respect to $\beta_2$.
	\end{enumerate}
\end{lem}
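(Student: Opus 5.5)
The plan is to mirror the three-step proof of Lemma \ref{csth}, with $I_0$ replaced by $J$ from Lemma \ref{Jpthm} and the inequalities reversed. \textbf{Step 1 (local existence and the barrier $\overline{h}$).} Since $J$ is smooth and bounded on $\{(r,h_2): r\ge1,\ \beta_2\le h_2\le \overline{h}(r)\}$, the right-hand side $J(r,h_2)/r$ is locally Lipschitz. Picard--Lindel\"of then gives a unique local solution of \eqref{dhsup} near $r=1$. The solution is nondecreasing as long as $h_2\le\overline{h}(r)$, because $J\ge0$ there.

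Next I show $h_2(r)<\overline{h}(r)$ for all $r$ whenever $\beta_2<\overline{h}(1)$. Suppose $r_1$ is the first touching point. Then $h_2'(r_1)\ge \overline{h}'(r_1)$. On the other hand, Lemma \ref{Jpthm} gives $J(r_1,\overline{h}(r_1))=0$, so $h_2'(r_1)=0$. This contradicts $\overline{h}$ being strictly increasing; here I take $\underline{f}$ strictly increasing, as can be arranged in \eqref{uof}. Hence $\beta_2\le h_2<\overline{h}\le1$, and the solution extends globally by the continuation theorem. When $\beta_2=\overline{h}(1)$, uniqueness forces $h_2\equiv\overline{h}$, since $\overline{h}$ itself solves \eqref{dhsup}. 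That is the correct reading of the special case in (i).

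\textbf{Step 2 (limit and rate).} Since $h_2$ is monotone and bounded by $1$, the limit $h_2(\infty)\le1$ exists. The Newton--Leibniz argument from Lemma \ref{csth} then shows $\liminf_{r\to\infty} J(r,h_2(r))=0$; otherwise $h_2$ would grow like $\varepsilon\ln r$.

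If $h_2(\infty)<1$, pass to the limit along such a sequence in \eqref{imJeq}. This yields $\theta=\sum\arctan(a_ih_2(\infty))<\sum\arctan a_i=\theta$, a contradiction. So $h_2(\infty)=1$. The same contradiction argument, now with $\limsup J>0$, gives $J(r,h_2(r))\to0$, which is exactly $rh_2'\to0$. Thus \eqref{limh} holds.

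For the rate, set $t=\ln r$ and $\phi=1-h_2\ge0$. Then
\[\phi'(t)=-J(e^t,1-\phi)=-\bigl[J(e^t,1-\phi)-J(\infty,1-\phi)\bigr]-J(\infty,1-\phi).\]
The second estimate in \eqref{eqJ} gives $J(\infty,1-\phi)=d(A,\epsilon_0)\phi+O(\phi^2)$, where I use $J(\infty,1)=0$. The first estimate bounds the bracket by $O(e^{-\beta t})$. This produces $\phi'=-d(A,\epsilon_0)\phi+O(e^{-\beta t})+O(\phi^2)$, the same ODE as \eqref{dphi}. The two-pass integrating-factor argument then gives \eqref{absuph}. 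For the supercritical case, invoking Remark \ref{rmsup} with $\epsilon_0=0$ gives \eqref{absuphsc}.

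\textbf{Step 3 (monotone dependence on $\beta_2$).} Let $v=\partial h_2/\partial\beta_2$. It solves the linearized equation $v'=r^{-1}\partial_{h}J(r,h_2)\,v$ with $v(1)=1$, so
\[v(r)=\exp\Bigl(\int_1^r \tau^{-1}\partial_hJ\,d\tau\Bigr)>0.\]
This gives continuity and strict monotonicity. No truncation such as $I_0$ appears here, so the argument is simpler than in Lemma \ref{csth}.

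The main obstacle I expect is Step 2. Specifically, I need the limit $\partial_hJ(\infty,1)=-d(A,\epsilon_0)$ together with uniform $O(r^{-\beta})$ control of $J(r,\cdot)-J(\infty,\cdot)$ near $h=1$, in order to linearize. I also need the sign bookkeeping: here $\phi=1-h_2$ rather than $h_2-1$, so the decay should come out as $\phi'\approx -d\phi$. I would verify that sign first by differentiating \eqref{imJeq} at $(\infty,1)$ exactly as in Lemma \ref{Ipthm}.
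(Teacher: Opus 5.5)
Your route is the one the paper intends: it gives no separate proof of this lemma, only ``similar to Lemma~\ref{csth}''. Steps 2 and 3 are sound:
\begin{itemize}
\item the Newton--Leibniz argument and the identification $h_2(\infty)=1$;
\item the upgrade to $J(r,h_2(r))\to 0$, which is exactly $rh_2'\to0$;
\item the linearization $\phi'=-d(A,\epsilon_0)\phi+O(e^{-\beta t})+O(\phi^2)$, with the correct sign;
\item the variational equation for $\partial h_2/\partial\beta_2$.
\end{itemize}
Since $\phi=1-h_2\ge 0$, the one-sided integrating-factor bound already gives \eqref{absuph}.

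\textbf{The step that fails} is your treatment of $\beta_2=\overline h(1)$. You claim $\overline h$ itself solves \eqref{dhsup}, so uniqueness gives $h_2\equiv\overline h$. But along $\overline h$ the right-hand side is $J(r,\overline h(r))/r=0$, whereas $\overline h'>0$. That is precisely the fact you used one sentence earlier to reach the contradiction at the first touching point. So $\overline h$ is not a solution unless $\underline f$, and hence $\overline h$, is constant.

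What actually happens is $h_2'(1)=0<\overline h'(1)$. Hence $h_2<\overline h$ immediately after $r=1$, $J>0$ there, and the solution behaves exactly like the generic case: $\overline h(1)<h_2(r)<\overline h(r)\le 1$ for $r>1$. Consequently the special-case identity in (i) is false for nonconstant $\underline f$ under either reading ($h_2\equiv\overline h(1)$ or $h_2\equiv\overline h$), and cannot be proved. The correct statement is $\beta_2<h_2(r;\beta_2)<\overline h(r)\le 1$ for all $r>1$ and all $0<\beta_2\le\overline h(1)$.

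The paper's proof of Lemma~\ref{csth} contains the same slip: ``$h_1=\underline h$ is a solution since $I_0(r,\underline h(r))=0$''. Mirroring it therefore imports the error. Fortunately the special case is never used later, since Propositions~\ref{sublem} and~\ref{suplem} only take $\beta_1>\underline h(1)$ and $\beta_2<\overline h(1)$.

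A smaller point concerns the touching argument: you need $\overline h'(r_1)>0$, not merely strict monotonicity of $\overline h$, so choose $\underline f$ with $\underline f'>0$. Alternatively, get $h_2\le\overline h$ by a direct comparison, which is all Steps 2--3 require. Above $\overline h$ one has $J<0$ while $\overline h'\ge 0$, so $h_2-\overline h$ is strictly decreasing wherever it is positive and can never become positive.
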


\noindent\textbf{Proof of Propsition \ref{suplem}.} For any $\alpha_2\in \mathbb{R}$, $0<\beta_2<\overline{h}(1)$ and $0<\beta_2<\overline{h}(1)$,  let
\begin{equation}\label{supw}
	\begin{aligned}
		\overline{w}_{\alpha_2,\beta_2}(x)=\phi_{_{\alpha_2,\beta_2}}(r)&=\alpha_2+\int_{1}^rsh_2(s;\beta_2)ds\\
		&=:\alpha_2+\frac{1}{2}\sum_{i=1}^na_ix_i^2+\mu_2(\beta_2)-\int_{r}^{\infty}s(h_2(s;\beta_2)-1)ds,
	\end{aligned}
\end{equation}
where 
\[\mu_2(\beta_2)=\int_{1}^{\infty}s(h_2(s;\beta_2)-1)ds-\frac{1}{2}.\]

Combine the above results with Corollary \ref{supst}, $\overline{w}$ is a  supersolution of \eqref{Lu3.1} in $\mathbb{R}^n\setminus E_{\overline{r}}$, where $\overline{r}$ s a constant $n,A,b,\epsilon_0,\beta$ and $\delta$.  $\overline{w}$ has  initial date $\overline{w}_{\alpha_2,\beta_2}(1)=\alpha_2$ and $\overline{w}_{\alpha_2,\beta_2}^{'}(1)=\beta_2$.

By \eqref{absuph}, it's easy to see when $r\to \infty$, 
\begin{equation*}
	\int_{r}^{\infty}s(h_2(s;\beta_2)-1)ds=
	\begin{cases} 
		O(r^{2-\min\{d(A,\epsilon_0), \beta\}}), & \text{if } d(A,\epsilon_0) \neq \beta, \\ 
		O(r^{2-\beta} \ln r), & \text{if } d(A,\epsilon_0) = \beta, 
	\end{cases}
\end{equation*}
In view of  Remark \ref{rmsup} and \eqref{absuphsc},  if $\theta+f(x)>(n-2)\pi/2$,   
 \begin{equation*}
\int_{r}^{\infty}s(h_2(s;\beta_2)-1)ds=
	\begin{cases} 
		O(r^{2-\min\{d(A,0), \beta\}}), & \text{if } d(A,0) \neq \beta, \\ 
		O(r^{2-\beta} \ln r), & \text{if } d(A,0) = \beta, 
	\end{cases}
\end{equation*}
as $r\to \infty$.
Inserting the above equality into \eqref{supw}, we can obtain the asymptotic behavior \eqref{omegasup} and \eqref{omegasupsc}.
 \qed

\subsection{Radial supersolutions outside $\Omega$}\label{Rsup}

 By our assumption, $B_{\frac{1}{\sqrt{a_n}}}\subset E_1 \subset \subset \Omega$.
Let $\tilde{a}:=\tan \frac{\theta-\delta}{n}>0$, it's easy to see
\[g\left(\tilde{a},\cdots,\tilde{a}\right)=\theta-\delta.\]
 Denote $r=\sqrt{\tilde{a}}|x|, x\in \mathbb{R}^n$ and $\tilde{w}=\tilde{w}(r)$. Let $\tilde{w}$ satisfies 
\begin{equation}\label{rsupode}
	\begin{cases}
		F(D^2\tilde{w})=g(\tilde{a}h+\tilde{a}rh^{'},\tilde{a}h,\cdots,\tilde{a}h)=\theta-\delta,& r>\sqrt{\frac{\tilde{a}}{a_n}},\\
		\tilde{w}\left(\sqrt{\frac{\tilde{a}}{a_n}}\right)=\alpha_3, \quad   \tilde{w}^{'}\left(\sqrt{\frac{\tilde{a}}{a_n}}\right)=\beta_3.
	\end{cases}
\end{equation}
Here, $h=\tilde{w}^{'}(r)/r$. It's clear that $\tilde{w}$ is a supersolution of \eqref{eq}   in $\mathbb{R}^n\setminus \Omega$.

\begin{prop}
	 Let \( \alpha_3 \in \mathbb{R} \) and \( \beta_3 > \sqrt{\frac{\tilde{a}}{a_n}} \). If $\theta+f(x)>(n-2)\pi/2$, problem \eqref{rsupode} admits a unique solution \( h_{3}(r) \).
	Moreover, for any \( r \geq \sqrt{\frac{\tilde{a}}{a_n}} \), \( h_{3}(r) > 1 \) and \(h_{3}^{'}(r) < 0 \).
\end{prop}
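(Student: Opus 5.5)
The plan is to reduce \eqref{rsupode} to a scalar autonomous first-order ODE for $h=\tilde w'(r)/r$ and to use $h\equiv 1$ as an equilibrium that trajectories cannot cross. Write $r_0:=\sqrt{\tilde a/a_n}$. The initial condition $\tilde w'(r_0)=\beta_3$ becomes $h(r_0)=\beta_3/r_0$. I read the hypothesis $\beta_3>\sqrt{\tilde a/a_n}$ as the normalization giving $h(r_0)>1$, i.e.\ $\beta_3>r_0$, so I will work with $h(r_0)>1$.

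Since $\arctan$ is strictly increasing, the equation $g(\tilde a h+\tilde a r h',\tilde a h,\dots,\tilde a h)=\theta-\delta$ is equivalent, wherever the right-hand side below is defined, to
\[
r h'=G(h):=\frac{1}{\tilde a}\tan\bigl(\theta-\delta-(n-1)\arctan(\tilde a h)\bigr)-h .
\]
The first step is to check that $G$ is smooth on $h\ge 1$. For $h\geq 1$ the argument of $\tan$ is at most $\theta-\delta-(n-1)\arctan\tilde a=\arctan\tilde a<\pi/2$, using $g(\tilde a,\dots,\tilde a)=\theta-\delta$. It is larger than $\theta-\delta-(n-1)\pi/2$, and this exceeds $-\pi/2$ precisely because of the supercritical assumption $\theta-\delta>(n-2)\pi/2$. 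This is the only place where supercriticality enters, and it is the step I expect to need the most care: one must make sure the argument of $\tan$ stays inside $(-\pi/2,\pi/2)$ along the whole trajectory, which holds provided $h$ never drops below $1$. Moreover $G(1)=\tilde a^{-1}\tan(\arctan\tilde a)-1=0$. Differentiating gives
\[
G'(h)=-\frac{(n-1)\sec^2(\cdot)}{1+\tilde a^2h^2}-1<0,
\]
so $G(h)<0$ for $h>1$.

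Next, in the variable $t=\ln r$ the equation becomes the autonomous ODE $dh/dt=G(h)$ with $G$ locally Lipschitz on $[1,\infty)$. Picard's theorem gives a unique local solution with $h(\ln r_0)=\beta_3/r_0>1$. Since $h\equiv1$ is also a solution, uniqueness forbids the trajectory from reaching $1$ in finite time, so $h>1$ as long as the solution exists. Consequently $h'=G(h)/r<0$, and $h$ is confined to $(1,\beta_3/r_0]$, a region where $G$ is bounded. By the standard continuation theorem the solution extends to all $r\ge r_0$, and this $h_3$ is unique.

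Finally, set $\tilde w(r)=\alpha_3+\int_{r_0}^r s\,h_3(s)\,ds$; it satisfies both initial conditions and is $C^2$. The formula for $D^2\tilde w$ in the radial case, namely \eqref{gra} with $A=\tilde aI$, shows that the eigenvalues of $D^2\tilde w$ are $\tilde a(h+rh')$ and $\tilde a h$ with multiplicity $n-1$, so $F(D^2\tilde w)=\theta-\delta$ holds exactly. As a by-product of the monotonicity, $h_3$ decreases to the only zero of $G$ on $[1,\infty)$, so $h_3\to1$.
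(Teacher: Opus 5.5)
Your proposal is correct and follows essentially the paper's route. Your explicit $G(h)=\tilde a^{-1}\tan\bigl(\theta-\delta-(n-1)\arctan(\tilde a h)\bigr)-h$ is exactly the implicit function $I(h_3)$ the paper obtains from \eqref{Isupeq} (indeed $G'(1)=-n$, matching $\frac{dI}{dh_3}(1)=-n$), and your remaining steps are the argument of Lemma \ref{csth} that the paper invokes by reference: reduce to $h_3'=I(h_3)/r$ with $h_3(r_0)=\gamma_3=\beta_3\sqrt{a_n/\tilde a}>1$, show the equilibrium $h\equiv 1$ cannot be crossed, and continue the solution, where you write out explicitly the supercriticality check and the uniqueness/continuation details.
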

\begin{proof}
	This propsition can be proved in the same way seeking a smooth solution analogous to Lemmas \ref{Ipthm}--\ref{csth}. Now, we  just deal with the simple version: $\overline{f}=\theta-\delta>(n-2)\pi/2$ and  $\underline{h}=1$.
	
	Indeed, as argued for Lemma \ref{Ipthm}, we observe that for each $1<h<\gamma_3:=\beta_3\sqrt{\frac{a_n}{\tilde{a}}}$ there
	exists a nonpositive smooth function $I(h_3)$ such that 
	\begin{equation}\label{Isupeq}
		g\left(\tilde{a}h_3 + \tilde{a}I, \tilde{a}h_3, \cdots, \tilde{a}h_3 \right)=\theta-\delta.
	\end{equation}
	Moreover, $I(h_3)=0$ if and only if $h_3\equiv1$,  $I(h_3)$ is monotone decreasing in $h_3$ and 
	\[\frac{d I}{d h_3}(1)=-n.\] 
	
	It's clear that $\tilde{w}$ is a supersolution of \eqref{eq}   in $\mathbb{R}^n\setminus \Omega$ if $h_3$ satisfies equation \eqref{Isupeq}.
	In order to obtain a solution of \eqref{rsupode}, this leads us to study
	\begin{equation}\label{hrsupode}
		\begin{cases}
			\frac{d h_3}{dr}=\frac{I(h_3)}{r},& r>\sqrt{\frac{\tilde{a}}{a_n}},\\
		h_3\left(\sqrt{\frac{\tilde{a}}{a_n}}\right)=\gamma_3.  
		\end{cases}
	\end{equation}
	Similar to the result in Lemma \ref{csth}, we easily infer that problem \eqref{hrsupode} admits a unique smooth solution $h_3(r;\gamma_3)$ defined on $\left[\sqrt{\frac{\tilde{a}}{a_n}},+\infty\right)$, such that for $r>\sqrt{\frac{\tilde{a}}{a_n}}$,
	\[h_3(r;\gamma_3)>1 \quad \text{and}\quad h_3^{'}(r;\gamma_3)<0.\] 
	Moreover, $h_3(r;\gamma_3)\equiv1$ if and only if $\gamma_3=1$,  $h_3(r;\gamma_3)$ is strictly increasing with respect to $\gamma_3$  and $\lim\limits_{\gamma_3\to +\infty} h_3(r;\gamma_3)=+\infty, \ \forall r\geq \sqrt{\frac{\tilde{a}}{a_n}}$.
\end{proof}

Let 
\begin{equation}\label{suptw}
	\begin{aligned}
		\tilde{w}_{\alpha_3,\beta_3}(x)=\phi_{_{\alpha_3,\beta_3}}(r)&=\alpha_3+\int_{\sqrt{\frac{\tilde{a}}{a_n}}}^rsh_3(s;\gamma_3)ds\\
		&=:\alpha_3+\frac{\tilde{a}}{2}|x|^2+\mu_3(\gamma_3)-\int_{r}^{\infty}s(h_3(s;\gamma_3)-1)ds\\
		&=\frac{\tilde{a}}{2}|x|^2+\alpha_3+\mu_3(\gamma_3)+O\left(|x|^{2-n}\right)\quad \text{as }\ |x|\to +\infty,
	\end{aligned}
\end{equation}
where 
\[\mu_3(\gamma_3)=\int_{\sqrt{\frac{\tilde{a}}{a_n}}}^{\infty}s(h_3(s;\gamma_3)-1)ds-\frac{\tilde{a}}{2a_n}.\]

Clearly, $\tilde{w}_{\alpha_3,\beta_3}$ is a  solution of \eqref{rsupode} on $\left[\sqrt{\frac{\tilde{a}}{a_n}},+\infty\right)$ with initial date $\tilde{w}_{\alpha_3,\beta_3}(\sqrt{\frac{\tilde{a}}{a_n}})=\alpha_3$ and $\tilde{w}_{\alpha_3,\beta_3}^{'}(\sqrt{\frac{\tilde{a}}{a_n}})=\beta_3$. Futhermore, $\mu(\gamma_3)$ is strictly increasing in $\gamma_3$ and $\mu_3(\gamma_3)\to +\infty$ as $\gamma_3\to +\infty$.

\section{Proof of Theorem \ref{tedp}}\label{sec5}
In this section, we prove Theorem \ref{tedp} by applying an adapted Perron’s method (see \cite{CL2003,BLL2014,BLZ2015,LZ2019,LW2024}). The main ingredient of the proof is to demonstrate the existence
of a viscosity subsolution $\underline{u}$ of \eqref{eedp} with prescribed Dirichlet boundary value and
asymptotic behavior at infinity, and also a viscosity supersolution $\overline{u}\geq \underline{u}$ but agreeingon $\underline{u}$ at infinity.

\vskip 5mm
\noindent \textbf{Proof of Theorem \ref{tedp}.}  Analogous to the Lemma 4.3  in \cite{LZ2019} and \cite[Pages 14--15]{WB2024arxiv}, we only to  prove in the case that the matrix $A$
is diagonal and the vector $b$ vanishes.

Without loss of generality, we also assume $E_1 \subset \subset\Omega \subset \subset E_{r_1} \subset \subset E_{r_2}$, where $r_1 \leq r_2<\overline{r}$ are fixed constants, $\overline{r}$ is defined in Propsition \ref{suplem} and Lemma \ref{spv}.  We next split the proof into three steps.

\textbf{Step 1.} Construct a viscosity subsolution \(\underline{u}\) of \eqref{eedp} (or \eqref{eedp1}) with \(\underline{u} = \varphi\) on \(\partial \Omega\). Moreover, for $f\equiv 0$ and $\delta<\theta<n\pi/2-\delta$, \(\underline{u}\)  has  the asymptotics behavior 
\begin{equation}\label{absube}
	\underline{u}(x)=\frac{1}{2}\sum_{i=1}^na_ix_i^2 + c+O(|x|^{2-d(A,\epsilon_0)}), \quad \text{as} \ |x|\to +\infty.
\end{equation}
For $f\not\equiv 0$ and $(n-2)\pi/2<\theta<n\pi/2$, if $\beta \neq d(A,0)$,  \(\underline{u}\)  has  the asymptotics behavior 
\begin{equation}\label{absube1}
		\underline{u}(x)=\frac{1}{2}\sum_{i=1}^na_ix_i^2 + c+O(|x|^{2-\min \{d(A,0),\beta\}}), \quad \text{as} \ |x|\to +\infty.
\end{equation}
If $\beta = d(A,0)$, 
\(\underline{u}\)  has  the asymptotics behavior 
\begin{equation}\label{absube2}
	\underline{u}(x)=\frac{1}{2}\sum_{i=1}^na_ix_i^2 + c+O(\ln|x||x|^{2-d(A,0)}), \quad \text{as} \ |x|\to +\infty.
\end{equation}

Let $A\in \mathcal{A}_{0}$ and \( K > 0 \)  large enough such that the function \( Q_{\xi} \) given by Lemma \ref{bdsublem} satisfies
\begin{equation}\label{bdsubeq}
	F(\lambda(D^2Q_{\xi})) = g(K\lambda(A)) = \theta+\delta\geq \theta+f(x),\quad \forall x\in \mathbb{R}^n\setminus \Omega.
\end{equation}
Hence, \( Q_{\xi} \) is a smooth subsolution of \eqref{eedp}.

 For \( x \in \mathbb{R}^n \setminus \Omega \), set
\[
Q(x) = \max\{Q_{\xi}(x) : \xi \in \partial \Omega\}.
\]
Then \( Q \) is a viscosity subsolution of \eqref{eedp} by \cite[Lemma 4.2]{CIL1992} and \( Q = \varphi \) on \( \partial \Omega \) by Lemma \ref{bdsublem}.

Let
\[\alpha_1 = \min\{ Q_{\xi}(x) : \xi \in \partial \Omega, \; x \in \overline{E}_{r_1}\setminus \Omega \}.\]
and
\[u_{\alpha_1,\beta_1}(x)=\alpha_1+\int_{r_1}^r\tau h_1(\tau;\beta_1)d\tau, \quad \forall r\geq 1,\ \beta_1>\underline{h}(1),\]
where $h_1(r;\beta_1)$ is given in Lemma \ref{csth}.

By Propsition \ref{sublem}, $u_{\alpha_1,\beta_1}$ is a $C^2$  subsolution of \eqref{eq} in $\mathbb{R}^n\setminus \Omega$.

It's clear that $u_{\alpha_1,\beta_1}=\alpha_1$ on $\partial E_{r_1}$ and $\max_{\partial E_{r_1}}u_{\alpha_1,\beta_1}\leq \min_{\partial E_{r_1}}Q$  by defination. Indeed, we have $u_{\alpha_1,\beta_1}\leq \alpha_1\leq Q$ on $\overline{E}_{r_1}\setminus \Omega$. By Lemma \ref{bdsublem} and the defination of $Q$, we also get $u_{\alpha_1,\beta_1}\leq \alpha_1\leq \varphi$ on $\partial \Omega$.

It follows  Lemma \ref{csth} that  $u_{\alpha_1,\beta_1}$ is strictly increasing with respect to $\beta_1$ and 
\begin{equation}\label{ua1b1}
	\lim_{\beta_1\to +\infty} u_{\alpha_1,\beta_1}(x)=+\infty,\quad \forall r\geq1.
\end{equation}
As showed in \eqref{omega1}  and \eqref{w}, for any $ \beta_1 \geq  \underline{h}(1)$, we have
\begin{align*}
	u_{\alpha_1,\beta_1}(x)&=\alpha_1+\int_{r_1}^r\tau h_1(\tau;\beta_1)d\tau\\
	&=\alpha_1+\frac{r^2-r_1^2}{2}+\int_{r_1}^r\tau \left(h_1(\tau;\beta_1)-1\right)d\tau\\
	&=\frac{1}{2}\sum_{i=1}^na_ix_i^2+\left(\alpha_1-\frac{1}{2}r_1^2+\int_{r_1}^\infty\tau \left(h_1(\tau;\beta_1)-1\right)d\tau\right)-\int_{r}^\infty\tau \left(h_1(\tau;\beta_1)-1\right)d\tau\\
	&=:\frac{1}{2}\sum_{i=1}^na_ix_i^2+\mu_1(\beta_1)-\int_{r}^\infty\tau \left(h_1(\tau;\beta_1)-1\right)d\tau,
\end{align*}
where 
\[\mu_1(\beta_1):=\alpha_1-\frac{1}{2}r_1^2+\int_{r_1}^\infty\tau \left(h_1(\tau;\beta_1)-1\right)d\tau.\]

For fixed $r_2>r_1$, in light of \eqref{ua1b1}, there exists $\hat{\beta}_1>\underline{h}(1)$ large enough such that
\begin{equation}
	\label{uqieq}
	\min_{\partial E_{r_2}} 	u_{\alpha_1,\hat{\beta}_1}>\max_{\partial E_{r_2}}Q.
\end{equation}

Let $c_{*}:= \mu_1(\hat{\beta}) $. Via Lemma \ref{csth}, 
 for each $c>c_{*}$, there exists a unique $\beta_1(c)>\hat{\beta}_1$ such that $\mu_1(\beta_1(c))=c$. 

  Using   Lemma \ref{csth} again, for $f\equiv 0$ and $\delta<\theta<n\pi/2-\delta$,    let $\beta$ large enough such that $\beta>n\geq d(A,\epsilon_0)$, then we get asymptotic behaviors \eqref{absube}--\eqref{absube2}.

For every $c>c_{*}$, define
\begin{equation}
	\label{subu}
	\underline{u}(x) =
	\begin{cases} 
		Q(x), & x \in E_{r_1} \setminus \Omega, \\
		\max\{Q(x), u_{\alpha_1, \beta_1(c)}(x)\}, & x \in E_{r_2} \setminus E_{r_1}, \\
		u_{\alpha_1, \beta_1(c)}(x), & x \in \mathbb{R}^n \setminus E_{r_2}.
	\end{cases}
\end{equation}
From \cite[Lemma 4.2]{CIL1992}, we deduce that $\underline{u}$ is a viscosity subsolution
of \eqref{eedp} (or \eqref{eedp1}) satisfying  $\underline{u}=\varphi$ on $\partial \Omega$ and asympotoic behaviors \eqref{absube}--\eqref{absube2}.

\textbf{Step 2.} Construct a viscosity supersolution \( \overline{u} \) of \eqref{eedp} (or \eqref{eedp1}) to satisfy
\[
\underline{u} < \overline{u} \text{ in } \mathbb{R}^n \setminus \Omega \quad \text{and} \quad \lim_{|x| \to \infty} (\overline{u} - \underline{u})(x) = 0.
\]

%

We first consider the supersolution for problem \ref{eedp}.    Recall that \(\overline{w}_{\alpha_2,\beta_2}\) and \(\tilde{w}_{\alpha_3,\beta_3}\)  introduced in Sections~\ref{Rsup} and~\ref{gssup}, respectively. We construct a supersolution of \eqref{eedp} in \(\mathbb{R}^n \setminus \Omega\) by splicing them together and choosing appropriate parameters \(\alpha_3\) and \(\beta_3\) such that
\begin{equation}\label{w23}
	\max_{\partial B_{r_3}} \tilde{w}_{\alpha_3,\beta_3} \leq \min_{\partial B_{r_3}} \overline{w}_{\alpha_2,\beta_2} \quad \text{and} \quad \min_{\partial B_{r_4}} \tilde{w}_{\alpha_3,\beta_3} \geq \max_{\partial B_{r_4}} \overline{w}_{\alpha_2,\beta_2},
\end{equation}
where \(r_3\) and \(r_4\) are two fixed numbers such that \(E_{r_1} \subset E_{r_2}\subset E_{\overline{r}}\subset B_{r_3}  \subset B_{r_4}\).

Let
\[
M(\alpha_2): = \min\{\overline{w}_{\alpha_2,\beta_2}(x) : x \in \overline{B_{r_3}} \setminus E_{\overline{r}}\}
\]
and
\[\hat{\alpha}_3=M(\alpha_2)-\int_{\sqrt{\frac{\tilde{a}}{a_n}}}^{\sqrt{\tilde{a}}r_3}sh_3(\tau;\gamma_3)d\tau.\]

Therefore, $\tilde{w}_{\hat{\alpha}_3,\beta_3}$ becomes
\begin{equation}\label{nw3}
	\tilde{w}_{\hat{\alpha}_3,\beta_3}(x)=M(\alpha_2)+\int_{\sqrt{\tilde{a}}r_3}^{\sqrt{\tilde{a}}|x|}sh_3(\tau;\gamma_3)d\tau.
\end{equation}
It's clear that $	\tilde{w}_{\hat{\alpha}_3,\beta_3}$ satisfies the first ineuality in \eqref{w23}.

 In the following, we show the second one holds when $\gamma_3$ (or $\beta_3$) is sufficiently large.
Notice that 
\[\max_{\partial B_{r_4}} \overline{w}_{\alpha_2,\beta_2}=\alpha_2+\max_{\partial B_{r_4}}\int_{r_1}^r \tau h_2(\tau;\gamma_2)d\tau\]
and 
\begin{align*}
	\min_{\partial B_{r_4}} \tilde{w}_{\hat{\alpha}_3,\beta_3}&=M+\int_{\sqrt{\frac{\tilde{a}}{a_n}}}^{\sqrt{\tilde{a}}r_4}sh_3(\tau;\gamma_3)d\tau\\
	&=\alpha_2+\min \left\{\int_{r_1}^r \tau h_2(\tau;\gamma_2)d\tau: x \in \overline{B_{r_3}} \setminus E_{\overline{r}} \right\}+\int_{\sqrt{\frac{\tilde{a}}{a_n}}}^{\sqrt{\tilde{a}}r_4}sh_3(\tau;\gamma_3)d\tau.
\end{align*}
Hence, by the monotonicity of $\tilde{w}_{\hat{\alpha}_3,\beta_3}$ with respect to $\beta_3$, one can infer that if $\beta_3$ is
sufficiently large (fix a large $\beta_3=\hat{\beta}_3$), then the second inequality in \eqref{w23} holds.

In order to construct a  supersolution $\overline{u}$ satisfies $\overline{u}\geq \varphi$ on $\partial \Omega$, our strategy is to let $\tilde{w}_{\hat{\alpha}_3,\hat{\beta}_3}$ be a   supersolution near  $\partial \Omega$ and let  $\overline{w}_{\alpha_2,\beta_2}$ be a supersolution near infinity.

For any $x\in \partial \Omega$, since $M(\alpha_2)>\alpha_2$ by \eqref{supw} and $B_{1/\sqrt{a_n}}\subset \Omega$, we get
\begin{align*}
	\overline{u}&= \tilde{w}_{\hat{\alpha}_3,\hat{\beta}_3}\geq M(\alpha_2)+\int_{\sqrt{\tilde{a}}r_3}^{\sqrt{\frac{\tilde{a}}{a_n}}}sh_3(\tau;\gamma_3)d\tau\geq  \alpha_2+\int_{\sqrt{\tilde{a}}r_3}^{\sqrt{\frac{\tilde{a}}{a_n}}}sh_3(\tau;\gamma_3)d\tau.
\end{align*}
 
 Let $\alpha_2(c)=c-\mu_2(\beta_2)$, then for $c>c_{**}:=\max_{\partial \Omega} \varphi-\int_{\sqrt{\tilde{a}}r_3}^{\sqrt{\frac{\tilde{a}}{a_n}}}sh_3(\tau;\gamma_3)d\tau+\mu_2(\beta_2)$, we have
 \[	\overline{u}(x)\geq c_{**}-\mu(\beta_2)+\int_{\sqrt{\tilde{a}}r_3}^{\sqrt{\frac{\tilde{a}}{a_n}}}sh_3(\tau;\gamma_3)d\tau\geq \max_{\partial \Omega} \varphi,\quad \forall x\in \partial\Omega.\]

 For every $c>\max\left\{c_{*},c_{**}\right\}$, define
\begin{equation}
	\label{supu}
	\overline{u}(x) =
	\begin{cases} 
		\tilde{w}_{\hat{\alpha}_3,\hat{\beta}_3}(x), & x \in B_{r_3} \setminus \Omega, \\
		\min\{	\tilde{w}_{\hat{\alpha}_3,\hat{\beta}_3}(x), \overline{w}_{\alpha_2,\beta_2}(x)\}, & x \in B_{r_4} \setminus B_{r_3}, \\
	\overline{w}_{\alpha_2(c),\beta_2}(x), & x \in \mathbb{R}^n \setminus B_{r_4}.
	\end{cases}
\end{equation}
It's clear $\overline{u}$ satisfies $\overline{u}\geq \varphi$ on $\partial \Omega$.  In view of  Lemma \ref{cspth}, it's easy to see 
\begin{equation}
	\label{absupe}
	\lim\limits_{|x|\to +\infty}|\overline{u}(x)-\underline{u}(x)|=0.
\end{equation}

To use the Perron’s method, now we need only to prove that
\[\underline{u}\leq \overline{u}\quad \text{on}\ \mathbb{R}^n\setminus \Omega.\]

In view of \eqref{w} and \eqref{supw}, notice that $h_1(r;\beta_1)>1$ and $h_2(r;\beta_2)<1$, we get 
\begin{equation}\label{u1w2cp}
	u_{\alpha_1, \beta_1(c)}(x)\leq \frac{1}{2}\sum_{i=1}^na_ix_i^2 + c\leq \overline{w}_{\alpha_2,\beta_2}(x)\quad \text{on}\ \mathbb{R}^n\setminus E_{\overline{r}}.
\end{equation}

Next, we show $Q_{\xi}\leq \tilde{w}_{\hat{\alpha}_3,\hat{\beta}_3}$ on $\overline{B}_{r_4}\setminus \Omega$ and hence $Q_{\xi}\leq \tilde{w}_{\hat{\alpha}_3,\hat{\beta}_3}$ on $\overline{E}_{r_2}\setminus \Omega$.

For every $\xi\in \partial\Omega$, we have
\[Q_{\xi}<\varphi\leq \tilde{w}_{\hat{\alpha}_3,\hat{\beta}_3}.\]

On the other hand, for any  $x\in \partial {B}_{r_4}$, by Lemma  \ref{bdsublem}, there exists $\overline{c}=\overline{c}(\Omega,K,A,C)$ such that 
\begin{align*}
	\omega_{\xi}(x)\leq \frac{K}{2}x^{\mathsf{T}}Ax+\overline{c}\leq C(\Omega,K,A,C,r_4)\leq \tilde{w}_{\tilde{\alpha}_3,\hat{\beta}_3},
\end{align*}
provided $\hat{\beta}_3$ is sufficiently large.

In view of \eqref{rsupode} and \eqref{bdsubeq}, 
\[\sum_{i=1}^n\arctan \lambda_i(D^2\omega_{\xi})=\theta+\delta> \theta-\delta=\sum_{i=1}^n\arctan \lambda_i(D^2\tilde{w}_{\tilde{\alpha}_3,\tilde{\beta}_3}) \quad \text{in}\ B_{r_4}\setminus \overline{\Omega}.\]
We deduce the result from comparison principle. Hence, $Q\leq \tilde{w}_{\tilde{\alpha}_3,\tilde{\beta}_3}$ on $\overline{E}_{r_2}\setminus \Omega$.

 Finally, we claim $u_{\alpha_1, \beta_1(c)}\leq \tilde{w}_{\tilde{\alpha}_3,\hat{\beta}_3}$ on $\overline{B}_{r_4}\setminus E_{r_1}$. 

By \eqref{w23} and \eqref{u1w2cp}, it's clear the inequality holds on $\partial B_{r_4}$. On $\partial E_{r_1}$, we also let $\hat{\beta}_3$ large enough such that the inequality holds. Then by comparison principle, we get the desired result.

In summary, when  $f\not\equiv 0$ and $(n-2)\pi/2<\theta<n\pi/2$, by the defination of $\overline{u}$, $\underline{u}$, we conclude that $\underline{u}\leq \overline{u}$ on $\mathbb{R}^n\setminus \Omega$.

On the other hand, when $f\equiv 0$, we can take $\overline{u} = \frac{1}{2}\sum_{i=1}^na_ix_i^2 + c$ directly as the desired supersolution, where
\[c>\max\left\{c_{*},\max \left\{\frac{K-1}{2}x^{\mathsf{T}}Ax\vert \ x\in\partial\Omega  \right\}+\overline{c}  \right\}.\]
It's easy to verify that   $\overline{u}\geq \varphi$ on $\partial \Omega$ by Lemma \ref{bdsublem} and $\overline{u}\geq u_{\alpha_1, \beta_1(c)}$ on $\partial E_{r_2}$ by \eqref{uqieq}  and \eqref{u1w2cp}.
Consequently, comparison principle gives $Q\leq \overline{u}$ on $\overline{E}_{r_2}\setminus \Omega$.  Combined with \eqref{u1w2cp}, we also have $\underline{u}\leq \overline{u}$ on $\mathbb{R}^n\setminus \Omega$.

\textbf{Step 3.}    Construct a viscosity solution \( u \) to problem \eqref{eedp}.  
With \( \underline{u} \) and \( \overline{u} \) above. 
 
If $f\equiv 0$ and $\delta<\theta<n\pi/2-\delta$, define  
 \begin{align*}
 	u(x) &:= \sup\{v(x) \mid v \in C^0(\mathbb{R}^n\setminus\Omega), \; F(D^2v) \geq \theta \text{ in } \mathbb{R}^n\setminus \overline{\Omega} \\
 	& \quad  \text{ in the viscosity sense}, \; \underline{u} \leq v \leq \overline{u} \text{ in } \mathbb{R}^n\setminus \Omega, \; v = \varphi \text{ on } \partial \Omega\}.
 \end{align*}
By Lemma \ref{pemd}, we conclude that $u \in C^0(\mathbb{R}^n\setminus\Omega)$ is the unique viscosity solution
 to problem \eqref{eedp1}.  Thanks to \eqref{absube} and \eqref{absupe}, $u$ has the asymptotic behavior of probblem \eqref{eedp1}.

If $f\not\equiv 0$ and $(n-2)\pi/2<\theta<n\pi/2$,  Perron’s method as in Lemma \ref{pemd} could not be directly adapted to the problem for \eqref{eedp} (see \cite{LW2024}). We will provide a new proof by using the local solvability Theorem \ref{lsdp} for  supercritical phase for the Dirichlet problem.

 For $c>\max \{c_{*},c_{**}\}$, let $\mathcal{S}_c$ denote the set of $v\in C^0(\mathbb{R}^n \setminus \Omega )$ which are viscosity subsolutions of
 \eqref{eedp} in $\mathbb{R}^n \setminus \overline{\Omega}$ satisfying  
\begin{align*}
 v = \varphi \ \text{ on } \ \partial \Omega
\end{align*}
and
\[\underline{u}\leq v\leq \overline{u}\ \text{ in } \ \mathbb{R}^n\setminus \Omega.\]

Therefore, $\underline{u}\in \mathcal{S}_c$. Let
\[u(x)=\sup \left\{v(x)\vert \ v(x)\in \mathcal{S}_c \right\}.\] 
Then $u$ is of class $C^0(\mathbb{R}^n\setminus \overline{\Omega})$.
Thanks to \eqref{absube1}--\eqref{absube2} and \eqref{absupe}, $u$ has the asymptotic behavior of probblem \eqref{eedp}.

Next, we prove that $u$ satisfies the boundary condition. It is obvious from the definition of $\underline{u}$ that
\[\liminf_{x\to \xi}u(x)  \geq \lim_{x\to \xi}\underline{u}(x)=\varphi(\xi),\quad \forall \xi \in \partial\Omega. \]

In view of \cite[Proposition B]{WB2024arxiv},  there exists  $\omega\in C^0(\overline{E}_{r_1+1}\setminus \Omega)$ satisfying
\[
\begin{cases}
	\Delta \omega = 0, & \text{in } E_{r_1+1} \setminus \overline{\Omega}, \\
	\omega = \varphi, & \text{on } \partial \Omega, \\
	\omega = \max_{\partial\overline{E}_{r_1+1}} \overline{u}, & \text{on } \partial E_{r_1+1}.
\end{cases}
\]
Since $\theta+f(x)>(n-2)\pi/2$,  the viscosity subsolution \(v\in \mathcal{S}_c\) satisfies \(\Delta v \geq 0\) in viscosity sense (see \cite[Lemma 2.1]{WY2014}). Therefore,  for every $v\in \mathcal{S}_c$, by \(v \leq \omega\) on \(\partial(\overline{E}_{r_1} \setminus \Omega)\), we have
\[
v \leq \omega \quad \text{in } E_{r_1} \setminus \overline{\Omega}.
\]
It follows that
\[
u \leq \omega \quad \text{in } E_{r_1} \setminus \overline{\Omega},
\]
and then
\[
\limsup_{x \to \xi} u(x) \leq \lim_{x \to \xi} \omega(x) = \varphi(\xi), \quad \forall \xi \in \partial \Omega.
\]

Finally, we prove \( u \) is a viscosity solution of \eqref{eq}. For any \(\overline{x} \in \mathbb{R}^n \setminus \overline{\Omega}\), fix some \(\epsilon > 0\) such that \( B_\epsilon(\overline{x}) \subset \mathbb{R}^n \setminus \overline{\Omega} \). By the definition of \( u \), \( u \leq \overline{u} \). By Theorem \ref{lsdp}, there is a viscosity solution \(\widetilde{u} \in C^0(\overline{B_\epsilon(\overline{x})})\) to
\[
\begin{cases}
	\sum_{i=1}^n\arctan(\lambda_i(D^2\widetilde{u})) = \theta+f(x), & x \in B_\epsilon(\overline{x}), \\
	\widetilde{u} = u, & x \in \partial B_\epsilon(\overline{x}). 
\end{cases}
\]
By the maximum principle in \cite{JLS1988}, $u \leq \widetilde{u}\leq \overline{u}$ in $B_\epsilon(\overline{x})$, since the right side of the equation is in supercritical phase. Define
\[
w(y) = 
\begin{cases} 
	\widetilde{u}(y), & \text{if } y \in B_\varepsilon(\overline{x}), \\
	u(y), & \text{if } y \in \mathbb{R}^n \setminus (\Omega \cup B_\varepsilon(\overline{x})).
\end{cases}
\]

Clearly, \( w \in S_c \). So, by the definition of \( u \), \( u \geq w \) on \( B_\varepsilon(\overline{x}) \). It follows that \( u \equiv \widetilde{u} \) in \( B_\varepsilon(\overline{x}) \). Therefore \( u \) is a viscosity solution of \eqref{eedp}.  \qed

\end{document}